\documentclass[11pt]{amsart}
\usepackage{amsmath,fullpage}
\usepackage[spacing, kerning]{microtype}
\usepackage{amsfonts,mathrsfs, soul}
\usepackage{amsthm}
\usepackage{amssymb, mathtools}
\usepackage{enumerate}
\usepackage{tikz-cd}

\usepackage[bookmarksnumbered,linktocpage,hypertexnames=false,colorlinks=true,linkcolor=blue,urlcolor=blue,citecolor=purple,anchorcolor=green,breaklinks=true]{hyperref}
\usepackage{cleveref}
\usepackage[abbrev, alphabetic, nobysame]{amsrefs}
\usepackage{comment}
\usepackage{enumitem}
\theoremstyle{definition}
\newtheorem{Def}{Definition}[section]
\newtheorem{ex}[Def]{Example}
\newtheorem{rem}[Def]{Remark}

\theoremstyle{plain}
\newtheorem{prop}[Def]{Proposition}

\newtheorem{theorem}[Def]{Theorem}
\newtheorem{Thm}[Def]{Theorem}

\newtheorem{Conj}[Def]{Conjecture}
\newtheorem{question}[Def]{Question}
\newtheorem*{thm*}{Theorem}
\newtheorem{lem}[Def]{Lemma}
\newtheorem{cor}[Def]{Corollary}
\newtheorem*{cor*}{Corollary}

\newtheorem*{con*}{Conjecture}
\newtheorem*{frag*}{Question}
\newtheorem*{verm*}{Vermutung}

\usepackage{mathrsfs}

\newcommand\wt{\widetilde}

\newcommand\sO{\mathscr O}
\newcommand\sI{\mathscr I}

\renewcommand\P{\mathbb{P}}

\newcommand{\C}{{\mathbb C}}
\newcommand{\R}{{\mathbb R}}

\newcommand{\N}{{\mathbb N}}
\newcommand{\Z}{{\mathbb Z}}

\title[Short Title]{Stubborn Polynomials}
\author{Lorenzo Baldi}
\address{Universit\"at Leipzig, Leipzig, Germany}
\email{lorenzo.baldi@uni-leipzig.de}
\author{Grigoriy Blekherman}
\address{Georgia Institute of Technology, Atlanta, Georgia, USA}
\email{greg@math.gatech.edu}
\author{Khazhgali Kozhasov}
\address{Universit\'e C\^ote d'Azur, Nice, France}
\email{khazhgali.kozhasov@univ-cotedazur.fr}
\author{ \\ Daniel Plaumann}
\address{Technische Universität Dortmund, Dortmund, Germany}
\email{daniel.plaumann@math.tu-dortmund.de}
\author{Bruce Reznick}
\address{University of Illinois at Urbana-Champaign, Urbana, Illinois, USA}
\email{reznick@illinois.edu}
\author{Rainer Sinn}
\address{Universit\"at Leipzig, Leipzig, Germany}
\email{rainer.sinn@uni-leipzig.de}
\date{\today}
\thanks{L. Baldi was funded by the Humboldt Research Fellowship for postdoctoral researchers.}
\newcommand\sV{\mathcal{V}}
\newcommand\wh{\widehat}

\newcommand\ol{\overline}

\renewcommand{\div}{\mathrm{div}}
\DeclareMathOperator{\Pic}{Pic}

\begin{document}

\subjclass[2010]{Primary: 14P99, 14H99, 14J26}

\begin{abstract}
The relationship between nonnegative polynomials and sums of squares is a classical topic in real algebraic geometry. We study \emph{stubborn polynomials} $f$ on a real variety $X$, which are polynomials nonnegative on $X$, such that no odd power of $f$ is a sum of squares. Previously, stubborn polynomials were studied only in the globally nonnegative case, with results restricted to polynomials nonnegative on $\mathbb{P}^2$. We fully characterize stubborn polynomials on smooth curves, showing that a polynomial on a smooth totally real curve is stubborn if and only if all of its zeros are real. This implies that there exist smooth curves with no stubborn polynomials in low degree, while stubborn polynomials must exist in sufficiently high degrees on curves with positive genus.
We explore the much more delicate situation with singular and reducible curves. While being real-rooted always implies being stubborn, there also exist singular curves with no stubborn polynomials at all. 

We then analyze the case of ternary sextics, i.e.,~polynomials of degree $6$ on $\mathbb{P}^2$. We prove the Conjecture of Blekherman, Kozhasov, and Reznick that a nonnegative ternary sextic is stubborn if and only if its real delta-invariant is at least 9. To analyze this case, we develop results for lifting stubborn polynomials from curves to higher dimensional varieties and use the theory of weak Del Pezzo surfaces. We complement these results with structural properties of stubborn polynomials and present many explicit examples.
\end{abstract}

\maketitle

\section{Introduction}

The study of nonnegative polynomials and their relationship with sums of squares is a fundamental topic in real algebraic geometry. Hilbert showed \cite{MR1510517} that there exist nonnegative polynomials that are not sums of squares of polynomials. Hilbert's 17th problem \cite{MR1557926}, answered in the affirmative by Artin \cite{MR3069468}, asked whether every nonnegative polynomial is a sum of squares of rational functions. We are interested in understanding a certificate of nonnegativity that lies between being a sum of squares of polynomials and a sum of squares of rational functions: what are nonnegative polynomials $f$ such that an odd power $f^{2k+1}$ is a sum of squares of polynomials? Polynomials $f$ such that no odd power $f^{2k+1}$ is a sum of squares are called \emph{stubborn}. 

The systematic study of stubborn polynomials was initiated in \cite{Stubborn2024} for globally nonnegative polynomials (that is, $f$ nonnegative on the ambient space). Prior to that work only isolated examples were known, see \cite{MR554130}. We investigate stubborn polynomials on \emph{real projective varieties}. As in the case of globally nonnegative polynomials, we find that stubbornness of nonnegative polynomials on varieties is closely related to the geometry of their real zeroes. In this work we significantly improve our understanding of this relation.

Our first result is about polynomials on irreducible projective curves. For  \emph{smooth projective curves}, we get a surprising characterization of stubborn polynomials in terms of their roots. We say that a form (that is, a homogeneous polynomial) $F$ on a real curve $C$ is \emph{real-rooted} if the hypersurface defined by the vanishing of $F$ intersects $C$ only in real points. In the sequel, a variety $X$ is said to be \emph{totally real} if $X(\R)$ is Zariski dense in $X(\C)$. 

\begin{Thm}[{see \Cref{thm:many_zeroes_implies_stubborn,thm:stubborn_implies_many_zeroes,thm:large degree}}]
    \label{thm:main-curves}
    Let $X \subset \P^n$ be an irreducible, totally real, projective $d$-normal curve. Let $F \in \R[X]_{2d}$ be nonnegative on $X(\R)$ but not a sum of squares. Consider the following conditions:
    \begin{enumerate}
        \item $F$ is real rooted on $X$ with zeroes at smooth points;
        \item $F$ is stubborn.
    \end{enumerate}
    Then $(1) \implies (2)$, and if $X$ is smooth, then $(2) \implies (1)$. Furthermore, if $X$ is smooth of genus $g\ge 1$ and $d$ is large enough, there exist stubborn forms in $\R[X]_{2d}$.
\end{Thm}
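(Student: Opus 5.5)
We prove the three assertions in turn.

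\emph{(1) $\Rightarrow$ (2).} Suppose $F^{2k+1}=\sum_i G_i^2$ with $G_i \in \R[X]_{(2k+1)d}$. Let $p$ be a real zero of $F$; it is a smooth point of $X$ by assumption. Let $m=\operatorname{ord}_p F$ be the order of vanishing at $p$, computed in the discrete valuation ring $\sO_{X,p}$ after dehomogenizing. Since $F\ge 0$ near $p$ on the real branch, $m$ is even. Each $G_i$ is real, so the leading coefficients of the $G_i^2$ are nonnegative and cannot cancel. Hence $\operatorname{ord}_p G_i \ge (2k+1)m/2$ for every $i$. We then count degrees. Because $F$ is real-rooted, every zero of $F$ on $X$ is real and smooth, so
\[
2d\deg X=\sum_{p} \operatorname{ord}_p F .
\]
Consequently each $G_i$ vanishes on the effective divisor $\frac{2k+1}{2}\div(F)$, which has degree $(2k+1)d\deg X=\deg G_i$. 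This divisor is linearly equivalent to the hyperplane class of degree $(2k+1)d$. Therefore each $G_i$ is a real multiple of a single section $H$ with $\div(H)=\frac{2k+1}{2}\div F$. Writing $H = G\cdot F^k$, we find that $G$ lies in $\R[X]_d$ with $\div G=\frac12\div F$. Here $d$-normality is what guarantees that such a section of $\sO(d)$ actually comes from a form of degree $d$. This identifies $F=\lambda G^2$ with $\lambda> 0$, so $F$ is a sum of squares, a contradiction.

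The careful point in this step is that $\frac12 \div F$ is only known to be linearly equivalent to $\sO(d)$ after dividing. To handle it, I work with $H$ itself: $\div H$ equals $\frac{2k+1}{2}\div F$, and $\div(F^k)\le \div H$. The quotient $H/F^k$ is then a regular section of $\sO(d)$, and $d$-normality lifts it to a form.

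\emph{(2) $\Rightarrow$ (1) for smooth $X$.} Suppose $F$ has a non-real zero, or more generally $\div F\neq D_{\mathrm{real}}$. I want to show that $F^{2k+1}$ is a sum of squares for large $k$. The natural route is a local–global principle. On a smooth curve, $F$ is a sum of squares in each local ring $\sO_{X,p}$ at real points, because $F$ is positive semidefinite with even orders. The obstruction to being a global sum of squares then lives in a finite-dimensional space governed by $H^1$ and $\Pic$.

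The plan is to write $\div F=2D+E+\bar E$, where $D$ is the real part and $E$ is supported on non-real points. The goal is to find sections $G$ of $\sO((2k+1)d)$ with $\div G\ge (2k+1)D$ spanning enough so that $F^{2k+1}$ lies in the interior of the sums-of-squares cone of the space $L\big((2k+1)(2d H) - (2k+1)2D\big)$ restricted appropriately. Concretely, $F^{2k+1}/\prod$ (local equations)$^{2(2k+1)}$ becomes a strictly positive section of a line bundle whose degree grows like $(2k+1)\deg(E+\bar E)>0$. Strictly positive sections of sufficiently positive line bundles on a smooth real curve are sums of squares (Scheiderer-type result, degree $\ge 2g+1$-style bounds). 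Non-real zeros are exactly what make the degree grow, and this is the main obstacle: one must ensure that $F^{2k+1}$, with its forced vanishing, is strictly positive relative to the square of an $\sO((2k+1)d)(-(2k+1)D)$ twist. Moreover, the relevant line bundle $\mathcal L_k=\sO((2k+1)d)(-\tfrac{2k+1}{2}\cdot 2D)$ must have the correct parity class in $\Pic$, namely $\mathcal L_k^{2}\cong \sO(\div F^{2k+1}-(2k+1)2D)$ twisted. I would handle the parity by choosing $k$ so that the $2$-torsion discrepancy vanishes, using that real $2$-torsion is finite. I would then apply the curve sums-of-squares theorem, invoking the earlier results as the paper does.

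\emph{Existence.} For $g\ge1$ and $d$ large, I construct a real-rooted nonnegative $F$ that is not a sum of squares. Choose real points $p_1,\dots,p_N$ on $X(\R)$ with $N=d\deg X$, with $\sum p_i$ linearly equivalent to $\sO(2d)$ halved in the sense $2\sum p_i\sim \sO(2d)$, but $\sum p_i\not\sim \sO(d)$. Such points exist because on a curve of genus $g\ge 1$ a real divisor $\sum p_i$ can be moved along $X(\R)$ to realize a nontrivial real $2$-torsion class in the identity component, which is a real torus of dimension $g$ with nontrivial $2$-torsion. Large $d$ makes the relevant linear systems base-point-free and $d$-normal. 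The section $F$ with $\div F=2\sum p_i$ is then real-rooted and nonnegative, and it is not a square of a degree-$d$ form. By the argument of the first step, it is also not a sum of squares, so by (1) $\Rightarrow$ (2) it is stubborn.
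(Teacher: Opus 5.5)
Your proof of (1) $\Rightarrow$ (2) is correct. Dividing the common section $H$ by $F^k$ is a concrete version of the paper's argument that $\eta=d[H]-\tfrac12[\div F]$ is a nonzero real $2$-torsion class with $(2k+1)\eta=\eta$.

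\textbf{Existence.} This part has a genuine gap. A real-rooted $F$ with $\div F=2\sum p_i$ has constant sign on each connected component of $X(\R)$. Nothing in your construction forces the same sign on different components, and taking $\eta$ to be $2$-torsion in the identity component of $J(\R)$ is the wrong criterion. Take the smooth cubic $y^2z=x^3-xz^2$ with $O=[0:1:0]$. The points $(1,0)$ and $O$ lie on the same (unbounded) component, so $\eta=[(1,0)]-[O]$ is a nontrivial $2$-torsion point of $J(\R)^0$. Moreover $E=2O+(1,0)$ is a totally real divisor in $|H+\eta|$. But the form with divisor $2E$ is $F=(x-z)z$, which is negative on the oval $-1\le x/z\le 0$. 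More generally, for every $d$ and every $E'\in|dH+\eta|$, the form with divisor $2E'$ equals $c\,(x/z-1)\,g^2z^{2d}$ for some real rational function $g$, so it is never nonnegative. What you need are the \emph{positive} $2$-torsion points of \cite{baldiblekhermannsinn24}; here that is $[(-1,0)]-[O]$, which gives $(x+z)z\ge 0$. You also need an actual theorem that the classes $dH+\eta$ contain totally real effective divisors for $d\gg0$. Saying the points can be moved along $X(\R)$ proves neither. This combination is exactly \cite[Cor.~3.4.10]{baldiblekhermannsinn24}, which the paper invokes. (When $X(\R)$ is connected, every real $2$-torsion class is positive and the sign issue disappears.)

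\textbf{(2) $\Rightarrow$ (1).} Your skeleton matches the paper's. With $D=\tfrac12\div(F)_\R$ and $s$ a real section of $\sO_X(D)$, $f_0=F/s^2$ is a strictly positive section of $\mathcal M^{\otimes 2}$, where $\mathcal M=\sO_X(dH-D)$ and $\deg\mathcal M=\deg E>0$. However, the tool you invoke is false as stated. For $g\ge 1$, no degree bound makes strictly positive sections of $\mathcal M^{\otimes 2}$ into sums of squares of sections of $\mathcal M$. If $\deg\mathcal M\ge 2g+1$, the curve embedded by $|\mathcal M|$ is projectively normal but not of minimal degree. By \cite{MR3486176} it therefore carries nonnegative quadrics that are not sums of squares. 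Since both cones are closed and full-dimensional, it carries strictly positive ones too.

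The correct input is Scheiderer's non-uniform Positivstellensatz \cite[Theorem~4.11]{Scheiderer2011}: for the \emph{fixed} section $f_0$, $f_0^n$ is a sum of squares of sections of $\mathcal M^{\otimes n}$ for all $n\gg0$. This suffices, because $F^{2k+1}/s^{2(2k+1)}=f_0^{2k+1}$.

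The parity obstruction you describe does not exist. $\mathcal M^{\otimes(2k+1)}$ is an honest line bundle, and its square is automatically $\sO_X(\div(F^{2k+1})-2(2k+1)D)$. Your proposed cure could not work in any case, since $(2k+1)\eta=\eta$ for every $2$-torsion class $\eta$.

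Finally, from $f_0^n=\sum g_i^2$ you get $F^n=\sum (g_is^n)^2$ with $g_is^n\in H^0(X,\sO_X(nd))$. These are forms only if $X$ is $nd$-normal, which $d$-normality does not give. You must take the odd exponent $n$ large enough for Serre vanishing, as the paper does.
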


For the definition of $d$-normality see Section \ref{sec:def}. For projectively normal (and therefore smooth) curves, the above theorem implies that $F$ is stubborn on $X$ if and only if it is real-rooted.
The proof of the equivalence (1) $\iff$ (2) in this case relies on results from \cite{Scheiderer2011, baldiblekhermannsinn24}. It has unexpected consequences. 
First, only special \emph{extreme rays} of the cone $P_{X,2d}$ of nonnegative forms of degree $2d$ on $X$ (which are not squares) can be stubborn. This is different from the conjectural situation for $\mathbb{P}^2$ (\cite[Conj. $6$]{Stubborn2024}), where it is conjectured that every extreme ray of the cone of nonnegative polynomials (which is not a square) is stubborn.
Second, this equivalence combined with \cite[Cor.~3.4.10]{baldiblekhermannsinn24}, shows that for sufficently large $d$, the cone $P_{X,2d}$ contains stubborn forms for any smooth real curve $X$, see the second part of Theorem \ref{thm:main-curves}. However, there also exist smooth real curves on which there are no stubborn forms of small degrees $d$ at all! We give several constructions of such curves in Section \ref{sec:nostub}.
Based on this observation, for a given totally real projective variety $X$, we call an even degree $2d$ \emph{stubborn} if there exist stubborn polynomials of $X$ of degree $2d$. While real projective varieties on which all nonnegative forms in a fixed degree are sums of squares have been classified \cite{MR3486176}, our result opens a door for an intriguing research direction: understand real varieties $X$ with non-stubborn degrees.

We also investigate the existence of stubborn polynomials in the much more subtle case of non-smooth, and potentially reducible, real curves in Section~\ref{sec:cubics}. To illustrate the issues that can arise, we give the full ``zoo" of plane cubics, and explain in which cases there exist stubborn quadrics. In the general case, stubborn forms are connected to $2$-torsion points in the generalized Jacobian of the curve, see Section~\ref{sec:2torsion}.

Next, we look at stubborn polynomials on \emph{surfaces}. The paper \cite{Stubborn2024} introduced the invariant $\delta^{\R}$ of a real singularity of a plane curve. In \cite[Cor. 27]{Stubborn2024} it was shown that a nonnegative form on $\P^2$ with ``many" real singularities is necessarily stubborn. In particular, any nonnegative ternary sextic (form of degree $6$ on $\P^2$) with 10 real zeroes (counted by adding the local $\delta^\R$-invariants) is stubborn. We significantly generalize this result by completely classifying stubbornness of ternary sextics, solving a conjecture from \cite{Stubborn2024} {in the process}.

\begin{Thm}[{see Theorems \ref{thm:stubbornSextics} and \ref{thm:delta_geq_9}}]
\label{thm:sextics}
Let $F \in \R[x,y,z]_6$ be a nonnegative ternary sextic which is not a sum of squares. Then $F$ is stubborn if and only if $\delta^{\R}(F)\geq 9$.
\end{Thm}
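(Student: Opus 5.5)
Both directions go through the real zero locus of $F$ and the ternary cubics.

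\emph{Direction $\delta^{\R}(F)\ge 9 \Rightarrow$ stubborn.} For $\delta^\R(F)\geq 10$ we cite \cite[Cor.~27]{Stubborn2024}, so assume $\delta^\R(F)=9$. Suppose $F^{2k+1}=\sum_i G_i^2$. Each $G_i$ vanishes on the real zeros of $F$ to the order forced by the local singularity of $F$; this is the local analysis behind $\delta^\R$. Concretely, $G_i$ lies in the adjoint-type ideal $I_k$ of forms of degree $3(2k+1)$ with the prescribed local behaviour at the real zeros.

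Assume first that the nine conditions determine a unique cubic $C$. Then $F$ is not a sum of squares, since any square summand would be a cubic through the nine points, hence a multiple of $C$, forcing $F=\lambda C^2$. We restrict to $C$: all $G_i$ vanish on $C$ to the prescribed orders, and we want a contradiction with $F|_C$ not being a square. We will use the lifting framework developed later in the paper: if $F|_C$ is stubborn on $C$, then $F$ is stubborn on $\P^2$. Since $F|_C$ has degree $18$ on the cubic and has nine double real zeros, $F|_C$ is real-rooted with zeros at smooth points. When $C$ is irreducible and smooth or nodal/cuspidal ($d$-normal in the needed degree), the implication $(1)\Rightarrow(2)$ of \Cref{thm:main-curves} gives stubbornness of $F|_C$. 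The reducible and singular cubic cases must be checked individually, using the zoo of cubics in Section~\ref{sec:cubics}.

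If instead the nine conditions impose dependent conditions, so that there is a pencil of cubics, then by the Cayley–Bacharach property the nine points are a base locus of a pencil. In that case we expect $F$ to be a sum of squares of the pencil generators (in the style of Hilbert's argument), which is excluded by hypothesis.

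\emph{Direction stubborn $\Rightarrow \delta^{\R}(F)\ge 9$.} Equivalently, we show that $\delta^\R(F)\le 8$ implies some odd power of $F$ is a sum of squares. With at most eight real zeros, counted via $\delta^\R$, blowing up $\P^2$ at these points (including infinitely near ones) gives a weak del Pezzo surface $S$ of degree $\ge 1$. Its anticanonical ring is generated in low degree, and $F$ pulls back to a section of $-2K_S$ that is nonnegative and has no real zeros beyond exceptional curves of the appropriate type. On $S$ we want $F$ to be strictly positive in the relevant sense. We then apply a Positivstellensatz-type result (Scheiderer's theorem that strictly positive sections become sums of squares after multiplication or in powers, adapted to the graded ring of $-K_S$) to get $F^{2k+1}$ as a sum of squares of sections of $-(2k+1)K_S$. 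Pushing these down yields a sum of squares of forms of degree $3(2k+1)$, since sections of $-mK_S$ are exactly the degree-$3m$ forms with the adjoint vanishing.

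\emph{Main obstacle.} The hardest step is the second direction. There we must handle infinitely near points and $(-2)$-curves, where $F$ vanishes along curves of $S$ rather than only at isolated points, and verify strict positivity after clearing those. We also need normality and generation of the anticanonical ring in degree $\le 2$, where the degree-$1$ del Pezzo case, with its base point of $|-K_S|$, needs special care.
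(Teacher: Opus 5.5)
Your overall architecture matches the paper's. For $\delta^{\R}(F)\ge 9$ you restrict to a cubic through the nine (infinitely near) zeros, apply \Cref{cor:plane_cubics} and inherit via \Cref{prop:inherit_stubbornness}. For $\delta^{\R}(F)\le 8$ you use a weak del Pezzo surface plus Scheiderer. However, the $\ge 9$ direction has a genuine gap exactly at its crux. The curve results need $F|_C$ to be \emph{not a square} in $\R[C]_6$; for a nonnegative real-rooted form this is the same as not being a sum of squares. Real-rootedness alone is useless if $F\equiv A^2 \bmod C$.

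You never establish this. Your unique-cubic argument (``forcing $F=\lambda C^2$'') re-proves in $\R[x,y,z]$ the hypothesis that $F$ is not a sum of squares. The pencil case is left as an expectation. The paper closes the step as follows:
\begin{itemize}
\item suppose $F=A^2+CB$; then $A$ passes through the nine points;
\item since $(3E_0-\sum_{i\le 9}E_i)^2=0$, the cubics $A,C$ form a complete intersection of length $9$, so $B=\alpha A+\gamma C$;
\item hence $F$ is a binary quadratic form in $(A,C)$;
\item this form is nonnegative on $\P^1(\R)$ because real members of the pencil have real points, so it is a sum of squares, contradicting the hypothesis on $F$.
\end{itemize}

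Your own idea can also be completed. The strict transform $\widetilde F$ of $\sV(F)$ is irreducible of class $6E_0-2\sum_{i\le r}E_i$. So every member of $|3E_0-\sum_{i\le 9}E_i|$ has intersection number $0$ with $\widetilde F$. It cannot contain $\widetilde F$, since the difference would meet $E_0$ negatively, and is therefore disjoint from it. Hence this linear system is a single curve: a pencil would contain a member through a point of $\widetilde F$. So the pencil case never occurs. Your argument then works when run in $\R[C]$ rather than $\R[x,y,z]$. Any $A$ with $A^2\equiv F \bmod C$, or any summand of a sum-of-squares representation on $C$, vanishes on $\tfrac12\div(F|_C)$. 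It therefore lies in $\R C$, forcing $F|_C=0$ and contradicting irreducibility of $F$. The same intersection count shows that all $18$ intersection points are the nine real ones, with $C$ smooth there. So \Cref{cor:plane_cubics} applies directly, with no case-by-case use of the cubic zoo.

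In the $\delta^{\R}(F)\le 8$ direction your route is the paper's, but you stop at what you call the main obstacle. The section $f$ of $\omega_Z^{\otimes -2}$ given by $\widetilde F$ has no real zeros on $Z$ at all, since it is a strict transform under a resolution of the real singularities. Nothing vanishes along exceptional curves. The only problem is that $-K_Z$ is merely nef and big. The fix is to pass to the anticanonical model $\phi\colon Z\to\overline Z$, which contracts the $(-2)$-curves and on which $\omega_{\overline Z}^{-1}$ is ample. Since $\widetilde F\cdot\Gamma=-2K_Z\cdot\Gamma=0$ for every $(-2)$-curve $\Gamma$, and $\widetilde F$ is irreducible with $\widetilde F^2>0$, $\widetilde F$ misses all of them. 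Thus $\phi_*(f)$ is strictly positive on $\overline Z(\R)$, including at its singular points. Scheiderer's theorem applies on $\overline Z$; then pull back by $\phi$ and push forward by $\pi$. No generation statement for the anticanonical ring is needed.

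You also assert without proof that $Z$ is weak del Pezzo. This requires \Cref{lem:ternary_sextics_enriques}: no four points on a line, no seven on a conic, and no satellite points. That lemma follows from B\'ezout, irreducibility of $F$, and the fact that every real zero of $F$ has multiplicity exactly $2$.
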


The proof that the inequality $\delta^{\R}(F)\geq 9$ implies that $f$ is stubborn makes use of our curve results. Such a form $F$ \emph{inherits} stubbornness globally on $\P^2$ from stubbornness on a cubic curve! The proof that $\delta^{\R}(F)\leq 8$ implies that $F$ is not stubborn uses the classical theory of \emph{weak Del Pezzo} surfaces and a projective Positivstellensatz by Scheiderer \cite{Scheiderer2011}.

Finally, inspired by the situation with ternary sextics, we investigate when nonnegative polynomials on a curve can be ``lifted" to be nonnegative on a larger variety. If we can lift a nonnegative real-rooted form on a curve, this gives us a way of constructing stubborn forms on higher dimensional varieties, as the lifted form inherits stubbornness (see Proposition \ref{prop:inherit_stubbornness}).
Among the results in \Cref{sec:lifting}, we show, for instance, that lifting is often possible from elliptic normal curves. 

\begin{theorem}[see \Cref{thm:general_lifting} and \Cref{cor:ext}]\label{cor:lift}
Let $X\subseteq \P^n$ be a totally real variety whose vanishing ideal $I(X)$ is generated by quadrics. Let $L\subset \P^n$ be a linear space, such that $C=L\cap X$ is an elliptic normal curve in $L$, and $T_p C=T_p X\cap L$ for all $p\in C$.
Then for any quadric $F\in \R[C]_2$ nonnegative on $C$, there is a quadratic form $\hat{F}\in \R[X]_2$ nonnegative on $X$ such that $\hat{F} \mid_C=F$, i.e.,~$F$ can be extended to a quadratic form nonnegative on $X$.
\end{theorem}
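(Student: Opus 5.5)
Our plan is to reduce the statement to a description of the nonnegative quadrics on the elliptic normal curve $C\subset L$, lift the extremal ones by hand, and then finish by convexity. Write $L\cong\P^m$ with $m\ge 2$ (if $m=2$, $C$ is a plane cubic). Since $C$ is an elliptic normal curve of degree $m+1$, it is projectively normal. Hence $\R[C]_2=\R[L]_2/I(C)_2$, and every quadric on $C$ is the restriction of a quadratic form on $L$. Moreover, restriction $\R[X]_2\to\R[C]_2$ is surjective, since quadrics on $L$ extend to $\P^n$.

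\textbf{Step 1: reduce to extreme rays.} The cone $P_{C,2}$ of nonnegative quadrics on $C$ is closed and pointed. The set of $F\in\R[C]_2$ admitting a nonnegative lift to $X$ is a convex cone. It therefore suffices to lift the extreme rays of $P_{C,2}$ and then pass to closure. For closure we need compactness: the lifts must be chosen bounded in terms of $F$. We aim to produce lifts with controlled norm, for example by normalizing the lifts modulo the kernel of restriction, which is a finite-dimensional subspace.

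\textbf{Step 2: extreme rays.} On an elliptic normal curve, a nonnegative quadric $F$ that is not a sum of squares has a divisor $\div(F)=2D$, where $D$ is an effective divisor of degree $m+1$. The extreme ones among these have real zeros, and each zero appears with multiplicity $2$ as a tangency. Extreme rays that are squares $\ell^2$ of linear forms lift trivially, to $\hat\ell^2$ with $\hat\ell$ any linear extension. For a non-square extreme $F$, we take the real-rooted case with $D=p_1+\dots+p_{m+1}$ and the $p_i$ real smooth points of $C$.

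\textbf{Step 3: the lifting construction (main obstacle).} We need $\hat F\in\R[X]_2$ with $\hat F|_C=F$ and $\hat F\ge 0$ on $X(\R)$.

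1. Start from an arbitrary lift $G$. Since $I(X)$ is generated by quadrics and $C=L\cap X$, we have $I(C)=I(X)|_L+(\text{linear forms vanishing on } L)$. We will modify $G$ by elements of the form $\sum \lambda_j q_j + \sum \ell_k m_k$, where the $q_j\in I(X)_2$ and the $\ell_k$ cut out $L$.

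2. At each $p_i$, the form $F$ vanishes to order $2$ along $C$. The tangency hypothesis $T_pC=T_pX\cap L$ lets us choose the correction so that $\hat F$ vanishes at $p_i$ to order $2$ along all of $X$; that is, $d\hat F(p_i)=0$ on $T_{p_i}X$. Achieving this is a linear condition on the $m_k$, and the tangency condition is exactly what makes it consistent.

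3. Then consider $\hat F + t\sum_k \ell_k^2$ for large $t$. Away from $C$ the added sum of squares is positive. Near $C$, positivity reduces to a local Hessian condition in the directions normal to $L$, and this holds for large $t$ provided $\hat F$ has no negative first-order behavior transverse to $L$ at the zeros $p_i$. That is exactly what Step 2 of this construction ensured.

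4. A compactness argument on $X(\R)$ (a tubular neighborhood of $C(\R)$ plus its complement) then gives nonnegativity for large $t$.

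We expect the delicate points to be two: the local analysis at the points $p_i$, where both $F$ and the correction vanish, and making $t$ uniform for the closure argument in Step 1. If uniformity fails, we would instead argue by Scheiderer-type positivity on the strictly positive interior and approximate.
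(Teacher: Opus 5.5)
You have the paper's overall architecture (reduce to extreme rays, correct $F$ by elements of $I(L)_2$ at the real zeros, add $\lambda\sum_k\ell_k^2$, finish by compactness). But item 2 of Step 3 omits the ingredient that makes the construction work.

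The correcting forms $m_k$ are global linear forms, so you must prescribe their values at all real zeros $p_1,\dots,p_{m+1}$ at once. The hypothesis $T_pC=T_pX\cap L$ only makes the condition at a single point consistent: it puts $dG(p)|_{T_pX}$ in the span of the $d\ell_k|_{T_pX}$. Solving at all points simultaneously requires that $E=\tfrac12\div(F)_\R$ impose independent conditions on $\R[C]_1$. This is exactly the extra hypothesis of \Cref{thm:general_lifting}, and it is not a formality. The Edge quartic example in \Cref{sec:lifting} satisfies the tangency hypothesis and has only simple tangencies, yet $F$ has no nonnegative lift. So an argument that never uses independence proves too much.

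For an elliptic normal curve in $L\cong\P^m$, this is where ellipticity enters. Since $\deg E=m+1=\dim\R[C]_1$, dependence gives a linear form $\ell$ with $\div(\ell)=E$, forcing $F=c\,\ell^2$; the paper phrases this via extremality and $F\pm\varepsilon\ell^2$. In fact, by Riemann--Roch every effective divisor of degree $\le m$ on $C$ imposes independent conditions on $\R[C]_1$. Hence \Cref{thm:general_lifting} applies to every nonnegative $F$ that is not a square, and the reduction to extreme rays is not even needed.

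Second, your claim in Step 2 that zeros of extreme rays are simple tangencies is false. On a smooth plane cubic with connected real locus, choose real points $p,q$ with $[2p+q-H]$ the nontrivial real $2$-torsion class of $\Pic^0(C)$, where $H$ is the line class. This gives (up to sign) a nonnegative conic with $\div(F)=4p+2q$, which is extreme and not a square. At a zero of multiplicity $2m_p\ge 4$, arranging $d\hat F(p)=0$ does not suffice. Suppose a term $a\,\ell_kt$ with $a\neq 0$ survives in the local expansion in parameters $t,\ell_1,\dots,\ell_{N-1}$, and let $b$ be the coefficient of $\ell_k^2$ in $\hat F$. On the germ $\{\ell_j=0,\ j\ne k\}$, the lowest-order part of $\hat F+\lambda\sum_j\ell_j^2$ is $(\lambda+b)\ell_k^2+a\,\ell_kt$, with no $t^2$ term. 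This is indefinite, so the form is negative arbitrarily close to $p$ however large $\lambda$ is. You must cancel all of $\ell_k,\ell_kt,\dots,\ell_kt^{m_p-1}$, i.e.\ match $(m_p-1)$-jets of linear forms along $C$ at $p$. This linear system is again solvable precisely because $E$, counted with multiplicities, imposes independent conditions.

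Finally, the closure step and the worry about uniformity in $t$ are unnecessary. $P_{C,2}$ is a closed pointed cone in a finite-dimensional space, so every element is a finite sum of extreme-ray generators, and the sum of their nonnegative lifts is a nonnegative lift.
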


It was shown in \cite{MR3486176} that if $X$ is a totally real variety, then every nonnegative quadratic form in $\R[X]_2$ is a sum of squares of linear forms if and only if $X$ is a variety of minimal degree. We can equivalently state this result as follows: if $X$ is a totally real variety, then every nonnegative quadratic form in $\R[X]_2$ can be extended to a globally nonnegative quadratic form on $\mathbb{P}^n$ if and only if $X$ is a variety of minimal degree. This interpretation naturally leads to the following question:

\begin{question}
Describe pairs of totally real varieties $X, Y$ with $X\subset Y$ such that any nonnegative quadratic form on $X$ can be extended to a nonnegative quadric on $Y$.
\end{question}

It follows from the result of \cite{MR3486176} that for a totally real variety of minimal degree $X$ any nonnegative quadratic form on $X$ can be extended to be nonnegative on $Y$ for any $Y$ containing $X$. However, we are not aware of any other results on pairs of totally real varieties where such an extension is possible. \Cref{cor:lift} gives us the first instance of lifting nonnegativity beyond varieties of minimal degree, as it allows us to lift nonnegative quadrics from elliptic normal curves.

Our next result says that on an elliptic normal curve, nonnegative forms of higher degree can be always extended to globally nonnegative forms, which together with Theorem \ref{thm:main-curves} gives rise to stubborn forms with ``few" real zeroes in $\P^n$.

\begin{theorem}[see \Cref{cor:ext_elliptic}]
    \label{cor:elliptic_normal_Pn}
Let $C$ be an elliptic normal curve of degree $n$ in $\P^n$. Then any form of degree $2d$  ($d\geq 3$ for $n=2$, and $d\geq 2$ for $n\geq 3$) nonnegative on $C$, can be extended to a globally nonnegative form on $\P^n$. In particular, there exist stubborn forms of degree $2d$ on $\P^n$ with at most $dn$ real zeroes, where each real zero is a simple node.
\end{theorem}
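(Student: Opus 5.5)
We prove the extension statement first and then deduce the existence of stubborn forms from it.

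\emph{Extension.} Let $f\in\R[C]_{2d}$ be nonnegative on $C(\R)$. Since $C$ is projectively normal, $f$ is the restriction of some form $G\in\R[x_0,\dots,x_n]_{2d}$. We look for an extension of the form
\[\hat F = G + \sum_i h_i q_i,\]
where the $q_i$ are quadrics generating $I(C)$ (for $n\ge 4$ the ideal is generated by quadrics; for $n=3$ by two quadrics; for $n=2$ by the cubic itself) and the $h_i$ are forms of complementary degree.

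We first choose the $h_i$ so that $\hat F$ is nonnegative near $C(\R)$. At a point $p\in C(\R)$, the normal directions are controlled by the differentials $dq_i$. Nonnegativity in a tubular neighbourhood requires:
\begin{itemize}
\item the first-order term of $\hat F$ in normal directions to vanish wherever $f(p)=0$;
\item the normal Hessian to be positive semidefinite, and positive definite away from the zeros.
\end{itemize}
Near the real zeros of $f$, we would arrange a local sum-of-squares structure such as $\hat F \approx f + \sum_i q_i^2\, s$ with $s>0$. To get this structure we use the $2$-normality of the relevant ideal and Serre vanishing for $I_C$ in the degrees $2d-2$ and $2d-4$.

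We then handle the region away from $C$. Adding $\lambda\,(\sum q_i^2)\,\ell^{2d-4}$ with $\ell$ positive, or multiplying by $(\sum x_j^2)^{d-2}$, makes $\hat F$ positive away from a neighbourhood of $C$ once $\lambda$ is large. A compactness argument on $\P^n(\R)$ then gives global nonnegativity. The degree restriction is exactly what makes $\sum q_i^2$ times a positive form of degree $2d-4$ available:
\begin{itemize}
\item for $n\ge 3$ the $q_i$ are quadrics, so $d\ge 2$ suffices;
\item for $n=2$ the generator is a cubic, so we need $2d\ge 6$.
\end{itemize}
Alternatively, for $d=2$ with $n\ge3$ one could apply \Cref{cor:lift} with $X=\P^n$, taking $\hat F$ a quartic and squaring-adjusting. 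We expect the main obstacle to be at the zeros of $f$ on $C$. There we must check that the correction terms do not create negative values in the mixed normal–tangent directions. This requires the linear terms in the $q_i$ to vanish to matching order, which we would arrange by choosing $h_i\in I(p)^2$ locally via a partition argument on the finitely many zeros.

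\emph{Stubborn forms.} Take $f$ real-rooted on $C$, nonnegative, and not a sum of squares. Such an $f$ exists by \Cref{thm:main-curves} for $d$ large; in general we take $f$ with $nd$ real zeros that are all double, which gives $dn$ zeros counted appropriately. Extend $f$ to $\hat F$ as above. By \Cref{prop:inherit_stubbornness}, $\hat F$ is stubborn on $\P^n$: if $\hat F^{2k+1}$ were a sum of squares, restricting to $C$ would make $f^{2k+1}$ a sum of squares.

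Finally, we check that the only real zeros of $\hat F$ are the $dn$ zeros of $f$ on $C$, using the positive definiteness away from $C$ built in above. Each of these zeros is a simple node of $\hat F$, because the normal Hessian was made positive definite there.
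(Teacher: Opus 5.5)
Your architecture (lift $f$ to some $G$, correct it by $\sum_i h_iq_i$ at the real zeros, add $\lambda(\sum_iq_i^2)(\sum_jx_j^2)^{d-2}$, conclude by compactness) is the paper's lifting argument written in $\P^n$ rather than on $\nu_d(\P^n)$. In the paper the corrections are $\sum_i s_i\ell_i+\lambda\sum_i\ell_i^2$ with $\ell_i\in I(C)_d$. However, the step that carries all the content is missing.

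At a real zero $p$ of $f$ of multiplicity $2m_p$, an arbitrary lift looks locally like $P(t)+\sum_j u_jQ_j(t)+O(u^2)$. No choice of $\lambda$ helps unless $Q_j(t)=O(t^{m_p})$, and since $\nabla G(p)\neq 0$ is possible for an arbitrary lift, this obstruction is real. So the $h_i$ must be single \emph{global} forms whose restrictions $h_i|_C$ have prescribed, generally nonzero, $(m_p-1)$-jets at \emph{all} real zeros simultaneously. Your proposed mechanisms do not achieve this:
\begin{itemize}
\item Taking $h_i\in I(p)^2$ only adds $O(t^2)u_j$, so it leaves the $u_j$ and $tu_j$ terms of $G$ untouched.
\item Polynomials cannot be patched together by a partition argument.
\end{itemize}
Even read as Lagrange-type interpolation, what you need is that $E=\tfrac12\div(f)_\R=\sum_p m_pp$ imposes independent conditions on $H^0(C,\sO_C(k))$, where $k=\deg h_i$; equivalently $h^0(C,\sO_C(E-kH))=0$. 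Projective normality of $C$ only lets you lift sections from $C$ to $\P^n$, and Serre vanishing says nothing in a fixed degree, so neither supplies this. The paper's Edge-quartic example shows that lifting can genuinely fail when independence fails.

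With your multiplier degree $k=2d-2$ (resp.\ $2d-3$ if $n=2$), one has $\deg(E-kH)\le(2-d)\deg C$ (resp.\ $3(3-d)$). So the condition is automatic for $d\ge 3$ (resp.\ $d\ge 4$). In the borderline case $d=2$ (resp.\ $d=3$), it fails only if $f$ is real-rooted with $E\sim kH$, i.e.\ $f$ is a square on $C$, and such an $f$ lifts as a square. Supplying this argument would complete your route, and the result would be somewhat leaner than the paper's. There the multipliers have degree $d$, so one is always in the borderline case. The paper therefore reduces to extreme rays of $P_{C,2d}$, uses \cite[Thm.~B]{baldiblekhermannsinn24} for real-rootedness, and gets independence from extremality via $F=\tfrac12(F-\varepsilon\ell^2)+\tfrac12(F+\varepsilon\ell^2)$. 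As written, though, this step is the gap.

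Two smaller points. First, applying \Cref{cor:lift} with $X=\P^n$ does not make sense, since $L\cap\P^n=L$ is linear. The correct setting is $X=\nu_d(\P^n)$ with $L$ the span of $\nu_d(C)$, which is exactly the paper's proof. Second, you need a real-rooted, nonnegative, non-square $f$ with only double zeros for \emph{every} admissible $d$. \Cref{thm:main-curves} provides this only for $d$ large. On an elliptic curve, you can instead take a reduced totally real divisor in the class $dH+\eta$, where $\eta$ is the nontrivial positive $2$-torsion point.
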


We complement these results with structural properties of stubborn polynomials. For instance, in Section \ref{sec:def} we show that non-stubborn forms of degree $2d$ on $X$ form a convex subcone of the cone $P_{X,2d}$ of nonnegative forms of degree $2d$ on $X$, generalizing a result of \cite{Stubborn2024}.
\medskip

We discuss our results in the context of the first example of a stubborn form given by Stengle. 
\begin{ex}
Stengle showed in \cite{MR554130} that the ternary sextic
$$S(x,y,z)=x^3z^3+(y^2z-x^3-z^2x)^2$$
is stubborn. The form $H = y^2z-x^3-z^2x=0$ defines a smooth cubic curve $C$ in $\mathbb{P}^2$. Stengle's argument that $S$ is stubborn is somewhat ad-hoc. Using our results, we can show that $S$ is stubborn using the curve $C$: The sextic $S$ intersects $C$ in $[0:0:1]$ with multiplicity 6 and $[0:1:0]$ with multiplicity 12, because $S$ reduces modulo the ideal $I(H)$ of $C$ to $x^3z^3$. Since $S(x,y,z)$ is globally nonnegative, as shown by Stengle, it follows that $S(x,y,z)$ is stubborn on $C$ by Theorem~\ref{thm:main-curves}. So $S$ inherits its stubbornness on $\P^2$ from being stubborn on $C$ (this was also observed by Stengle).
Moreover, this inheritance is not an artifact of Stengle's construction: as we show in the proof of \Cref{thm:sextics}, stubbornness of any ternary sextic is inherited from its stubbornness on some cubic curve.
Finally, as we show in \Cref{cor:lift}, we can extend any form nonnegative on $C$ to a globally nonnegative form, so Stengle's construction is not accidental.
\end{ex}

\section{Stubborn polynomials on varieties}\label{sec:def}

A \emph{real variety} is a reduced separated scheme of finite type over $\R$ (note that we do not assume irreducibility or smoothness). A main source of projective real varieties are the zero sets in complex projective space of real homogeneous polynomials.  
A \emph{totally real variety} is a real variety whose set of real points $X(\R)$ is Zariski dense in $X(\C)$. For irreducible varieties this is equivalent to the existence of a smooth real point in $X$ by the Artin-Lang Theorem, see \cite[Theorem~1.7.8]{Scheiderer2024}.

If $X\subset \P^n$ is an embedded real variety, we denote by $\R[X]$ its homogeneous coordinate ring. An even-degree form $F \in \R[X]_{2d}$ has a well-defined sign at each $p\in X(\R)$. We say that $F$ is nonnegative on $X(\R)$ if $F(p) \ge 0$ for all $p \in X(\R)$; in this case, we simply write $F \ge 0$ on $X(\R)$. A particularly simple class of nonnegative forms inside $\R[X]_{2d}$ are sums of squares, i.e., forms $F$ which can be written as $F=G_1^2 + \dots + G_m^2$ for some $G_i\in \R[X]_d$.

We study nonnegative forms that are not sums of squares, and moreover that do not become sums of squares under taking arbitrary odd powers.
\begin{Def}
    \label{def:stubborn}
  Let $X\subset \P^n$ be a totally real variety with homogeneous coordinate ring $\R[X]$. An even-degree form $F \in \R[X]$ is \emph{stubborn} if it is nonnegative on $X$ and $F^k$ is not a sum of squares for any odd $k\in \Z_{>0}$. An even degree $2d$ is called \emph{stubborn} for $X$ if there exist stubborn polynomials of degree $2d$ on $X$.
\end{Def}

Nonnegative forms of degree $2d$ on a totally real variety $X$ form a closed, full-dimensional, pointed cone in the real vector space $\R[X]_{2d}$ (see e.g.~\cite{MR3486176}), which we denote by $P_{X,2d}$. It is often convenient to consider only quadratic forms $\R[X]_2$, which can be achieved by the $d$-th Veronese embedding of $X$: set $Y = \nu_d(X)$ so that $P_{Y,2}=P_{X,2d}$. In the case $2d=2$ we drop the subscript and denote the cone of nonnegative quadratic forms on $X$ by $P_X$.

We now show that for a totally real variety $X$ the set of \emph{non-stubborn} polynomials of degree $2$ on $X$ is also a \emph{convex cone}, which is a subcone of $P_X$. We adapt the proof of the analogous result \cite[Theorem~44]{Stubborn2024} for the Veronese variety $X=\nu_d(\P^n)$.

\begin{theorem}\label{thm:convexity}
Let $X\subset \P^n$ be a totally real projective variety. If $F_1, F_2 \in P_X$ and $F_1^{k_1}$, $F_2^{k_2}$ are sums of squares for some odd $k_1, k_2\in \mathbb{N}$, then $(F_1+F_2)^{k_1+k_2-1}$ is a sum of squares.
  In particular, the set of non-stubborn quadrics is a convex cone.
\end{theorem}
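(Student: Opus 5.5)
Expand $(F_1+F_2)^{N}$ with $N=k_1+k_2-1$ (an odd number) by the binomial theorem:
\[
(F_1+F_2)^N=\sum_{j=0}^{N}\binom{N}{j}F_1^{j}F_2^{N-j}.
\]
The plan is to show that every summand is a sum of squares in $\R[X]$. The sums of squares form a convex cone that is closed under multiplication, so the total is then a sum of squares. Everything takes place in the graded ring $\R[X]$, and the only inputs are the two hypotheses that $F_1^{k_1}$ and $F_2^{k_2}$ are sums of squares. Nonnegativity of $F_1$ and $F_2$ on $X(\R)$ is never actually used beyond the setup.

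Fix $j$ and set $i=N-j$, so that $i+j=k_1+k_2-1$. By pigeonhole, either $j\ge k_1$ or $i\ge k_2$: if $j\le k_1-1$ then $i\ge k_2$. By symmetry, assume $j\ge k_1$.

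Now reduce using parity.
\begin{itemize}
\item If $j$ is even, write $F_1^jF_2^i = (F_1^{j/2})^2F_2^i$.
\item If $j$ is odd, then $j-k_1$ is even, since $k_1$ is odd. Write $F_1^j = F_1^{k_1}\,(F_1^{(j-k_1)/2})^2$, which is a sum of squares.
\end{itemize}
Treat $F_2^i$ the same way. If $i$ is even, $F_2^i$ is a square.

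The remaining case is $i$ odd, where we need $F_2^{i}$ to be handled by a sum of squares. If $i\ge k_2$, the same trick applies to $F_2$. If $i<k_2$ is odd, we cannot write $F_2^i$ as a sum of squares directly. Here use parity again: $i+j=N$ is odd, so when $i$ is odd, $j$ is even. The term is then $(F_1^{j/2})^2F_2^i$, which is still not obviously a sum of squares. This is the main obstacle.

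To fix it, I would refine the pairing: instead of treating summands individually, group consecutive terms, or use a different exponent argument. A cleaner route is to note that when $i<k_2$ is odd we have $j=N-i\ge k_1$, and $j$ is even while $k_1$ is odd, so $j\ge k_1+1$. Write
\[
F_1^jF_2^i = F_1^{j-k_1-1}\,F_1^{k_1}\,F_1F_2^{i}.
\]
This still leaves odd powers in the product, so that does not close the gap either.

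So I would instead follow \cite[Theorem~44]{Stubborn2024} and pair the terms $j$ and $j+1$ with $i$ odd, $j$ even:
\[
\binom{N}{j}F_1^{j}F_2^{i}+\binom{N}{j+1}F_1^{j+1}F_2^{i-1}.
\]
Factor out the square $F_1^{j}F_2^{i-1}$, which leaves the linear combination $aF_2+bF_1$ with $a,b>0$. This still fails, because $F_1+F_2$ is not a sum of squares.

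Hence the real fix must use multiplication by products of the given certificates. Whenever $j\ge k_1$ and $j$ is even with $i$ odd and $i<k_2$, we also get $j-1\ge k_1$ is even ... I would resolve this by carefully checking the exact statement of the exponent in \cite{Stubborn2024}, which I expect to be precisely chosen so that each monomial $F_1^jF_2^i$ with odd $i<k_2$ has $j\ge k_1$ odd. That would make the obstacle case vacuous. The second assertion, convexity, follows immediately, since the non-stubborn quadrics are then closed under addition and under positive scaling.
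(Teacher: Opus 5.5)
Your proposal has a genuine gap, and the way out you suggest at the end does not exist: the obstacle case is not vacuous. With $N=k_1+k_2-1$ and any $k_2\ge 3$, the index $j=k_1+1$, $i=k_2-2$ has $j\ge k_1$ even and $i<k_2$ odd. For instance, with $k_1=k_2=3$ the expansion of $(F_1+F_2)^5$ contains the terms $5F_1^4F_2$ and $5F_1F_2^4$. Nothing in the hypotheses makes either of these a sum of squares. No choice of exponent removes such terms. As you observed yourself, pairing consecutive terms only produces the linear cofactor $aF_1+bF_2$, which is no better. Any argument that handles the summands one at a time, or two at a time, cannot close this.

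The missing idea is to apply your pigeonhole split to whole blocks of terms rather than to individual terms. Every term whose $F_1$-exponent is at most $k_1-1$ has $F_2$-exponent at least $k_2$. So $F_2^{k_2}$ factors out of the sum of these terms, leaving the binary form $\sum_{i=0}^{k_1-1}\binom{N}{i}F_1^{i}F_2^{k_1-1-i}$, which has even degree $k_1-1$. Symmetrically, $F_1^{k_1}$ factors out of the remaining terms, with cofactor $\sum_{m=0}^{k_2-1}\binom{N}{m}F_2^{m}F_1^{k_2-1-m}$.

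The key lemma, used in the paper via \cite[Thm.~43]{Stubborn2024}, is that a truncated binomial $\sum_{i=0}^{2r}\binom{\ell}{i}t^i$ with $\ell>2r$ has no real roots. Hence each cofactor is a positive definite binary form, and therefore a sum of two squares of binary forms $G^2+H^2$. Substituting $(F_1,F_2)$ gives a sum of squares in $\R[X]$. Multiplying by the sums of squares $F_2^{k_2}$, respectively $F_1^{k_1}$, finishes the argument.

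In your example this reads
\[
(F_1+F_2)^5=F_2^3\,(10F_1^2+5F_1F_2+F_2^2)+F_1^3\,(F_1^2+5F_1F_2+10F_2^2).
\]
Both quadratic cofactors have discriminant $25-40<0$. The problematic terms $5F_1^4F_2$ and $5F_1F_2^4$ are absorbed into positive definite blocks, rather than being shown to be sums of squares individually.
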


\begin{proof}
For integers $\ell\geq 2r\geq 0$ let
\begin{align*}
  b_{\ell,2r}(t)\ =\ \sum_{i=0}^{2r} \binom{\ell}i\, t^i
\end{align*}
be the truncated binomial polynomial. By \cite[Thm. 43]{Stubborn2024}, if $\ell>2r$ then $b_{\ell,2r}(t)>0$ for all $t\in \R$.
Therefore, there are polynomials $g_{\ell,2r}(t)$ and $h_{\ell,2r}(t)$ 
with $b_{\ell,2r}(t) = (g_{\ell,2r}(t))^2 +  (h_{\ell,2r}(t))^2$, and (by homogenization) there exist binary forms $G_{\ell, 2r}, H_{\ell, 2r}$
of degree $r$ such that
\[
B_{\ell,2r}(t_1,t_2)\ =\ \sum_{i=0}^{2r} \binom {\ell}i\, t_1^it_2^{2r-i}\ =\  (G_{\ell,2r}(t_1,t_2))^2 +  (H_{\ell,2r}(t_1,t_2))^2,
\]
where $B_{\ell, 2r}$ is the homogenization of $b_{\ell, 2r}$.
We now expand $(F_1 + F_2)^{k_1+k_2-1}$, and by letting $i' = i - k_1$ in the following, we obtain its desired representation as a sum of squares,

\begin{align*}
(F_1+F_2)^{k_1+k_2-1} &= \sum_{i = 0}^{k_1+k_2-1} \binom{k_1+k_2 -1}i F_1^{i}F_2^{\,k_1+k_2-1-i}\\ 
&=\ \sum_{i = 0}^{k_1-1} \binom{k_1+k_2-1}i F_1^{i}F_2^{k_1+k_2-1-i}  +  \sum_{i = k_1}^{k_1+k_2-1} \binom{k_1+k_2-1}i F_1^{i}F_2^{k_1+k_2-1-i}\\
&=\ F_2^{k_2} \sum_{i = 0}^{k_1-1} \binom{k_1+k_2-1}i F_1^{i}F_2^{k_1-1-i}  +
 F_1^{k_1}  \sum_{i' = 0}^{k_2-1} \binom{k_1+k_2-1}{i '+k_1}F_1^{i'}F_2^{k_2-1-i'}\\
\end{align*}
\begin{align*}
&=\ F_2^{k_2} \sum_{i = 0}^{k_1-1} \binom{k_1+k_2-1}i F_1^{i}F_2^{k_1-1-i}  +
 F_1^{k_1}  \sum_{i' = 0}^{k_2-1} \binom{k_1+k_2-1}{k_2-1-i'}F_1^{i'}F_2^{k_2-1-i'}\\
     &=\ F_2^{k_2}B_{k_1+k_2-1,k_1-1}(F_1,F_2) + F_1^{k_1}B_{k_1+k_2-1,k_2-1}(F_2,F_1) \\
 &=\  F_2^{k_2}(G_{k_1+k_2-1,k_1-1}^2(F_1,F_2) +  H_{k_1+k_2-1,k_1-1}^2( F_1,  F_2)) \\
 &\ \ \ \,+ F_1^{k_1}(G^2_{k_1+k_2-1,k_2-1}(F_2,F_1) +  H_{k_1+k_2-1,k_2-1}^2(F_2,F_1)).
\end{align*}
The last expression is a sum of squares, since the cone of sums of squares is closed under products.
It follows in particular that the set of non-stubborn polynomials is a convex cone.
\end{proof}

A result of Scheiderer \cite{Scheiderer2011} shows that if $X$ is a smooth totally real curve or a totally real variety $X\subset\mathbb{P}^n$ of dimension at least 2, then all strictly positive forms on $X$ are non-stubborn. Therefore, in these cases, non-stubborn polynomials include the entire interior of $P_X$. 
\begin{cor}
    The set of non-stubborn quadrics on a smooth totally real variety $X \subset \P^n$ is a convex cone, containing the strictly positive quadrics on $X(\R)$. \qed
\end{cor}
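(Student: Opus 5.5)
The corollary combines \Cref{thm:convexity} with Scheiderer's result quoted just before it, so the plan is to check each half separately.

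\emph{Convexity.} The set $N$ of non-stubborn quadrics consists of those $F \in P_X$ for which some odd power $F^k$ is a sum of squares. I will check three closure properties.
\begin{enumerate}
\item \emph{Scaling.} If $F^k = \sum G_i^2$ and $\lambda>0$, then $(\lambda F)^k = \sum (\lambda^{k/2} G_i)^2$. Here $\lambda^{k/2}$ is a positive real scalar, so this is again a sum of squares of forms in $\R[X]$. Hence $N$ is closed under positive scaling.
\item \emph{Addition.} \Cref{thm:convexity} shows directly that $F_1,F_2 \in N$ implies $F_1+F_2 \in N$. The exponent $k_1+k_2-1$ is odd, since $k_1$ and $k_2$ are odd.
\item \emph{Zero.} The form $0$ is trivially a sum of squares, so $0 \in N$ if one wants the cone to contain its apex.
\end{enumerate}
Thus $N$ is a convex cone, and $N \subseteq P_X$ by definition.

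\emph{Strictly positive quadrics.} Let $F \in \R[X]_2$ be strictly positive on $X(\R)$. I will invoke Scheiderer's projective Positivstellensatz \cite{Scheiderer2011} as recalled above. It applies because $X$ is smooth and totally real, and is either a curve or of dimension at least $2$. It gives that some power of $F$, in fact some odd power (equivalently $F^{2m+1}$, or $F$ times a sum of squares) is a sum of squares in $\R[X]$. Hence $F \in N$.

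\emph{Main obstacle.} The only point needing care is that Scheiderer's theorem yields an \emph{odd} power of $F$, not merely an even power or a multiplier of the form $F \cdot \sigma$. The formulation I would cite is that $F^{N}$ is a sum of squares for all large $N$, or at least for infinitely many odd $N$. I also need to check that the zero-dimensional case is either excluded or trivial: for finitely many real points, a positive quadric is a sum of squares after interpolation. With these points settled, the corollary follows.
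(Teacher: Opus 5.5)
Your proposal is correct and follows the paper's implicit argument. Convexity comes from \Cref{thm:convexity} plus closure under positive scaling, and strictly positive quadrics are non-stubborn because Scheiderer's theorem makes $F^N$ a sum of squares for all sufficiently large $N$, hence for some odd $N$. One detail to add: Scheiderer produces squares of sections of $\sO_X(N)$, so you should take $N$ large enough that $X$ is $N$-normal, as the paper does in \Cref{thm:stubborn_implies_many_zeroes}, to land in $\R[X]$; likewise, in your zero-dimensional aside a positive quadric need not be a sum of squares in degree $2$ itself, but a large odd power is, by the same normality argument.
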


We conclude this section with a discussion of what existence of stubborn polynomials in degree $2d$ implies for existence of stubborn polynomials in higher degree. We first discuss the notion of \emph{$d$-normality} and \emph{projective normality} which we need in the sequel.

\begin{Def}
    \label{def:dnormal}
    Let $d > 0$ be a positive integer. A projective variety (not necessarily smooth or irreducible) $X \subset \P^n$ is called $d$-\emph{normal}  if the natural restriction morphism $H^0(\P^n, \sO_{\P^n}(d)) \to H^0(X, \sO_{X}(d))$ is surjective. A variety $X \subset \P^n$ is called \emph{projectively normal} if it is normal and $d$-normal for all $d$.
\end{Def}

Consider the short exact sequence of sheaves
\[
    0 \longrightarrow \sI_X \longrightarrow \sO_{\P^n} \longrightarrow \sO_X \longrightarrow 0
\]
Twisting by $d$ and taking cohomology, we see that $X$ is $d$-normal if and only if $H^1(\P^n, \sI_X(d)) = 0$. It follows from Serre's Vanishing theorem \cite[Ch. III, Thm.~5.2]{zbMATH03572315} that $X \subset \P^n$ is $d$-normal for all sufficiently large $d$. Moreover, since $\sI_X(d) \cong \sI_{\nu_d(X)}(1)$, it also follows that, if we take a sufficiently high Veronese reembedding, we obtain a $1$-normal variety.

Suppose that $X$ is projectively normal. Let $X'$ be a general hyperplane section of $X\subset \P^n$. We have exact sequence of sheaves (where $\sI_{X'}$ is a sheaf on $\P^{n-1}$):
$$0 \rightarrow \sI_X (k-1) \rightarrow \sI_X (k) \rightarrow \sI_{X'}(k) \rightarrow 0.$$
This leads to a long exact cohomology sequence
$$0 \rightarrow H^0(\P^n, \sI_X (k-1)) \rightarrow H^0(\P^n, \sI_X (k)) \rightarrow H^0(\P^{n-1}, \sI_{X'}(k))
\rightarrow H^1(\P^n, \sI_X (k-1))\rightarrow\ldots .$$
Since $H^1(\P^n, \sI_X (k-1))=0$, we get that any form of degree $k$ vanishing on $X'$ can be lifted to a form of degree $k$ on $\P^n$ containing $X$.

We now use the above discussion to show an implication for existence of stubborn polynomials.

\begin{prop}
Let $X$ be a totally-real, irreducible, projectively normal variety of dimension at least $2$ and suppose that $F\in \R[X]_{2d}$ is stubborn. Then all degrees greater than or equal to $2d$ are stubborn for $X$.
\end{prop}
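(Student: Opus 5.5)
It suffices to treat the next even degree: if $F\in\R[X]_{2d}$ is stubborn, I will produce a stubborn form in $\R[X]_{2d+2}$, and then induct. The candidate is $FL^2$, where $L\in\R[X]_1$ is a linear form chosen generically. Clearly $FL^2\ge 0$ on $X(\R)$. The task is to show that no odd power $F^kL^{2k}$ is a sum of squares.

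Suppose instead that $F^kL^{2k}=\sum_i G_i^2$ in $\R[X]_{2k(d+1)}$ for some odd $k$. Let $X'=X\cap\{L=0\}$. I choose $L$ so that $X'$ is a general hyperplane section with $X'(\R)$ Zariski dense in $X'$. This is possible because $X$ is totally real and irreducible: hyperplanes meeting $X$ through smooth real points give real sections that are generically irreducible of dimension $\ge 1$, by Bertini, since $\dim X\ge2$. Restricting the identity to $X'$ gives $0=\sum_i G_i^2|_{X'}$. Hence every $G_i$ vanishes on $X'(\R)$, so on $X'$ by density; this requires $X'$ reduced, which is also where generality is used. Now $\R[X]$ is a domain because $X$ is irreducible. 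Projective normality and the exact sequence discussed above give that $L$ generates the ideal of $X'$ in $\R[X]$ in each degree: a form vanishing on $X'$ is a multiple of $L$ in $\R[X]$. This is the step where projective normality enters. Indeed, $\R[X]/(L)\to \R[X']$ is injective in all degrees because $H^1(\sI_X(k-1))=0$, which is exactly the lifting statement above.

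Therefore $G_i=LG_i'$ and $L^2\big(F^kL^{2k-2}-\sum G_i'^2\big)=0$. Since $\R[X]$ is a domain, $F^kL^{2k-2}=\sum G_i'^2$. Repeating the argument $k$ times strips off all powers of $L$ and leaves $F^k$ as a sum of squares, which contradicts the stubbornness of $F$. So $FL^2$ is stubborn of degree $2d+2$, and induction gives all degrees $\ge 2d$.

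The main obstacle is justifying the cancellation step. We need that $\sum G_i^2$ vanishing on $X'$ forces each $G_i$ to vanish on $X'$, which requires a genuinely totally real, reduced section. We also need that vanishing on $X'$ gives divisibility by $L$ in $\R[X]$. I would first try to get the latter from the cohomology sequence above, using $H^1(\P^n,\sI_X(m))=0$ for all $m$. I would get the former from Bertini together with the existence of a smooth real point of $X$ at which the section remains smooth. If reducedness of a general real section is delicate, one can instead take $L$ through a smooth real point with $\{L=0\}$ transverse there. Then $X'$ has a smooth real point on each component through which the real locus is dense, and one argues component-by-component, using that a hyperplane section of a projectively normal variety of dimension $\ge2$ is connected and generically reduced.
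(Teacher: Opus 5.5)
Your proposal is correct and follows essentially the same route as the paper: pass to a real hyperplane section $X\cap\{L=0\}$ that is irreducible and totally real, observe that each $G_i$ must vanish on it, then use projective normality ($H^1(\P^n,\sI_X(k-1))=0$) to divide each $G_i$ by $L$ in $\R[X]$ and cancel $L^2$, repeating $k$ times to obtain a sum-of-squares representation of $F^k$. The differences are cosmetic: you induct one degree at a time, possibly with a new $L$ at each step, whereas the paper fixes $L$ and inducts on $m$ for $L^{2m}F$; and you spell out the genericity conditions (reducedness of the section, a smooth real point) that the paper compresses into ``transverse with a real intersection point.''
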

\begin{proof}
Let $H$ be a real hyperplane defined by the vanishing of a linear form $L$, which intersects $X$ transversely with a real intersection point such that $Y = H\cap X$ is irreducible. 
Then $Y$ is an irreducible totally real variety and $I(Y)=L+I(X)$ (see the paragraph before the proposition). We claim that $L^{2m}F$ is stubborn for all $m\geq 0$, which we prove by induction on $m$. The base case $m=0$ is given. Suppose that $(L^{2m+2}F)^k=\sum G_i^2$ for some odd $k$. Then, each $G_i$ vanishes on $L$, and therefore $L$ divides each $G_i$. We apply this observation repeatedly to see that $L^k$ divides each $G_i$. Thus we see that $(L^{2m}F)^k$ is a sum of squares, which is a contradiction.
\end{proof}
 We note that the above argument fails for curves, and we do not have a general statement for this case. However, as we will see below, stubborn polynomials exist in all degrees on elliptic normal curves, and for all polynomials of sufficiently large degree on smooth totally real curves.

\section{Existence of stubborn polynomials on curves}\label{sec:curves}

In this section we investigate stubborn polynomials on projective curves.  
We show below that these are closely linked to nonnegative forms with only real zeroes.

\begin{Def} Let $X\subset \P^n$ be an irreducible totally real projective curve. We say that a form $F\neq 0$ in $\R[X]$ is \emph{real-rooted} if all of the finitely many zeroes of $F$ on $X$ are real.
\end{Def}

In this section a curve $X\subset \P^n$ is always assumed to be totally real.
For $F \in \R[X]_{2d}$ we write $\div(F)$ for its divisor of zeros. For any divisor $D = \sum_i n_i P_i$ on $X$, its real part is denoted by $D_{\R} = \sum_{i\, :\,P_i \in X(\R)} n_i P_i$. Then $F$ is real-rooted if and only if $\div(F) = \div(F)_\R$.

Recall now the definition of $d$-normal variety from \Cref{def:dnormal}. We prove a necessary condition for stubbornness on curves.

\begin{theorem}
\label{thm:many_zeroes_implies_stubborn}
    Let $X\subset \P^n$ be a totally real projective curve. Let $F \in \R[X]_{2d}$ be nonnegative on $X(\R)$, but not a sum of squares. If $X$ is $d$-normal and $F$ real-rooted with all zeros at smooth points of $X$, then $F$ is stubborn.
\end{theorem}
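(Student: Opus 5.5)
Suppose, for contradiction, that $F^k=\sum_i G_i^2$ in $\R[X]_{2dk}$ for some odd $k$, with $G_i\in\R[X]_{dk}$. The plan is to show that every $G_i$ is divisible by a form $H$ with $F^{(k-1)/2}$ factoring off, ultimately forcing $F$ itself to be a sum of squares.

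\textbf{Step 1: local vanishing orders.} Let $p$ be a real zero of $F$; it is a smooth point of $X$. Write $v_p$ for the valuation of the local ring $\sO_{X,p}$, which is a DVR, and choose a local equation for the hyperplane sections. Since $F\ge 0$ near $p$ on the real branch through $p$, the order $m_p=v_p(F)$ is even. From $F^k=\sum G_i^2$ we get $\sum G_i^2$ of order $k m_p$ at $p$. Because $p$ is real and smooth, the leading coefficients of the $G_i$ in a real uniformizer are real, and a sum of squares of real leading terms cannot cancel. Hence $\min_i v_p(G_i)=k m_p/2$, so $v_p(G_i)\ge k m_p/2$ for every $i$.

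\textbf{Step 2: divisibility and descent.} Since $F$ is real-rooted, $\div(F)=\sum_p m_p p$ with all $p$ real. Set $D=\tfrac12\div(F)$, an effective Cartier divisor supported on smooth points. Step 1 gives $\div(G_i)\ge kD=\tfrac{k-1}{2}\div(F)+D$ for all $i$. The key move is to write
\[
G_i = F^{(k-1)/2}\, Q_i ,
\]
where $Q_i$ is a section of $\sO_X(d)$ with $\div(Q_i)\ge D$. This works because $G_i/F^{(k-1)/2}$ is a rational section of $\sO_X(dk-d(k-1))=\sO_X(d)$ whose poles are cancelled by Step 1. By $d$-normality, $H^0(X,\sO_X(d))=\R[X]_d$, so each $Q_i$ is a genuine form of degree $d$ (real, by Galois invariance).

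Then $F^k=F^{k-1}\sum Q_i^2$, and since $X$ is totally real and reduced and $F$ is not a zero divisor (it has finitely many zeros on the curve, with $X$ assumed irreducible or at least $F$ nonvanishing on components), we can cancel to get $F=\sum Q_i^2$ in $\R[X]_{2d}$. This contradicts the hypothesis that $F$ is not a sum of squares.

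\textbf{Main obstacle.} The delicate point is the division step: we need the quotient $G_i/F^{(k-1)/2}$ to lie in $\R[X]_d$ rather than merely being a section of the sheaf. This is exactly where $d$-normality is used; the coordinate ring in degree $dk$ need not match sections in general, but the quotient only needs to be a global section of $\sO_X(d)$. Smoothness at the zeros is what makes the valuation argument of Step 1 valid: at a singular real point, a sum of squares of terms of low order could vanish to higher order, and the bound would fail.

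I would also double-check that cancelling $F^{k-1}$ is legitimate on reducible $X$, by using that $F$ has only finitely many zeros and hence is a nonzerodivisor.
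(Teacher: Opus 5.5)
Your proof is correct, but it takes a different route from the paper.

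\textbf{The paper's route.} The paper argues globally through the generalized Jacobian. It attaches to $F$ the real $2$-torsion class $\eta=d[H]-\tfrac12[\div(F)]$ and shows $\eta\neq 0$; this is where $d$-normality and ``$F$ is not a square'' enter. Since $(2k+1)\eta=\eta$, no odd power of $F$ is a square. It then argues that, because $F$ is nonnegative and real-rooted, any representation $F^{2k+1}=\sum G_i^2$ forces each $G_i^2$ to be a positive multiple of $F^{2k+1}$. Implicitly this uses a degree count, together with the fact that a nowhere-vanishing regular function on $X$ is constant, and it would make $F^{2k+1}$ a square.

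\textbf{Your route.} You use the same local input: the bound $v_p(G_i)\ge k m_p/2$ at the real smooth zeros. This is exactly where real-rootedness and smoothness are needed. But you use it only to divide, writing $G_i=F^{(k-1)/2}Q_i$ with $Q_i\in H^0(X,\sO_X(d))=\R[X]_d$. You then cancel the nonzerodivisor $F^{k-1}$ and so descend the certificate directly to $F=\sum Q_i^2$.

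\textbf{What each buys.} Your descent is more elementary: it needs no Jacobian, no degree count, and no ``regular functions are constant'' step. It therefore works verbatim on reducible or disconnected reduced curves, provided $F$ vanishes on no component. It also makes transparent that only $d$-normality, not $dk$-normality, is used. The paper's route instead isolates the obstruction as a nontrivial real $2$-torsion point. That viewpoint is exactly what is reused in Theorem~\ref{thm:daniel_bis_monnier} to manufacture stubborn forms from $2$-torsion points of the generalized Jacobian.
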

\begin{proof}
    We denote by $J=\mathrm{Pic}^0(X)$ the generalized Jacobian of $X$, see \cite[p.~249]{harrisModuliCurves1998}. Its points correspond to linear equivalence classes of divisors on $X$ supported on smooth points of $X$ \cite[p.~251]{harrisModuliCurves1998}. By Bertini's theorem (see e.g. \cite[Thm.~II.8.18]{zbMATH03572315}), we can choose a real hyperplane that intersects $X$ transversely in smooth (not necessarily real) points, and let $H$ be the intersection divisor.
    
    We have $[2dH - \div(F)] = 0$ in $J$, and since $F$ is nonnegative on $X$ and real-rooted we see that $\eta=d[H] - \frac{1}{2}[\div(F)] \in J(\R)_2$ is a real $2$-torsion point. Since $F$ is not a square and $X$ is $d$-normal, it follows that $\eta \neq 0$, and thus 
    \[ (2k+1)d[H] - \frac{1}{2}[\div(F^{2k+1})] = (2k+1)\eta = \eta\] 
    is also a non-trivial $2$-torsion point of the generalized Jacobian. Therefore, $F^{2k+1}$ is not a square. 
    
   If we could write $F^{2k+1} = \sum_{i=1}^m G_i^2$, then since $F$ is nonnegative and real-rooted on $X$, we would have that for all $i$, $G_i^2$ is equal (up to a positive constant) to $F^{2k+1}$. This would imply that $F^{2k+1}$ is a square, which is a contradiction. Hence $F^{2k+1}$ is not a sum of squares for all $k$, i.e., it is stubborn. 
\end{proof}

It follows from Max Noether's AF+BG theorem that plane curves are $d$-normal for all $d$ \cite[Sec.~5.5]{fultonAlgebraicCurves1989}. We therefore get the following result from \Cref{thm:many_zeroes_implies_stubborn}.

\begin{cor}\label{cor:plane_cubics}
    Let $X \subset \P^2$ be a totally real projective plane curve. Let $F \in \R[X]_{2d}$ be nonnegative on $X(\R)$, but not a sum of squares. If $F$ is real-rooted with all zeros at smooth points of $X$, then $F$ is stubborn. \qed
\end{cor}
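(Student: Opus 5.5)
The plan is to deduce \Cref{cor:plane_cubics} directly from \Cref{thm:many_zeroes_implies_stubborn}. All hypotheses of that theorem are given verbatim in the corollary except one: $d$-normality of $X\subset\P^2$. So the only thing to check is that a plane curve $X=V(H)$ is $d$-normal for every $d\geq 1$, meaning $H^0(\P^2,\sO_{\P^2}(d))\to H^0(X,\sO_X(d))$ is surjective. Here $H$ is the squarefree real form cutting out $X$; we do not assume $X$ is irreducible or smooth.

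By the discussion after \Cref{def:dnormal}, surjectivity is equivalent to $H^1(\P^2,\sI_X(d))=0$. For a plane curve of degree $e$ the ideal sheaf is principal: $\sI_X\cong\sO_{\P^2}(-e)$, since it is generated by the single form $H$. Hence
\[
H^1(\P^2,\sI_X(d))\cong H^1(\P^2,\sO_{\P^2}(d-e))=0
\]
for all $d$, because line bundles on $\P^n$ have no intermediate cohomology for $n\geq 2$. This is the cohomological form of the Max Noether $AF+BG$ statement cited in the text. Algebraically it says that the saturated ideal $(H)$ defines the homogeneous coordinate ring $\R[x,y,z]/(H)$, and that this ring agrees in each degree with the sections of $\sO_X(d)$.

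With $d$-normality in hand, \Cref{thm:many_zeroes_implies_stubborn} applies verbatim to $F\in\R[X]_{2d}$. The reasoning it supplies is as follows. Since $F$ is nonnegative and real-rooted with zeros at smooth points, the class $d[H]-\tfrac12[\div(F)]$ is a real $2$-torsion point of the generalized Jacobian. It is nonzero because $F$ is not a square, and $d$-normality is exactly what makes this nonvanishing hold. The same class arises for every odd power of $F$. Real-rootedness then forces every sum-of-squares representation of $F^{2k+1}$ to collapse to a single square.

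I expect no real obstacle here. The only point needing care is that $X$ may be reducible or singular. This is harmless, because the vanishing of $H^1$ of a twisted line bundle on $\P^2$ does not depend on the curve.
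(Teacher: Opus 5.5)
Your proposal is correct and follows the paper's route exactly: the corollary comes from applying \Cref{thm:many_zeroes_implies_stubborn} once plane curves are known to be $d$-normal for all $d$, which the paper attributes to Max Noether's $AF+BG$ theorem. Your cohomological check, $\sI_X\cong\sO_{\P^2}(-e)$ and hence $H^1(\P^2,\sI_X(d))\cong H^1(\P^2,\sO_{\P^2}(d-e))=0$, is an equivalent and valid way to verify that hypothesis, and it also holds when $X$ is reducible or singular.
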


For smooth curves, the reverse implication also holds, thus proving that stubbornness and real-rootedness are equivalent for nonnegative forms that are not sums of squares.

\begin{theorem}
    \label{thm:stubborn_implies_many_zeroes}
    Let $X\subset \P^n$ be an irreducible smooth real projective curve. If $F\in \R[X]_{2d}$ is stubborn, then all zeroes of $F$ on $X$ are real.
\end{theorem}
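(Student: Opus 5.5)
We argue by contraposition: assuming $F\in\R[X]_{2d}$ is nonnegative on $X(\R)$ and has at least one non-real zero, we show that some odd power $F^{2k+1}$ is a sum of squares. The natural tool is Scheiderer's results on sums of squares on smooth curves \cite{Scheiderer2011}, together with the finer analysis in \cite{baldiblekhermannsinn24}. On a smooth curve, a nonnegative form is a sum of squares in high enough degree (after multiplication by suitable powers) exactly when a divisor-theoretic condition holds. This condition asks that the divisor $\frac12\div(F)$ be expressible, up to linear equivalence in the relevant degree, as the real-point part plus a sum of conjugate pairs coming from sections of $\sO_X(d)$. Non-real zeros of $F$ come in conjugate pairs $Q+\ol Q$, and they give us freedom to move the class.

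The key steps, in order, are as follows. First, write $\div(F)=2D_\R+E$. Here $D_\R$ is supported on real points, since real zeros of a nonnegative form have even multiplicity on a smooth curve, and $E=\sum (Q_j+\ol Q_j)$ with $E\neq 0$. Second, pass to $F^{2k+1}$: we want to write $F^{2k+1}=\sum G_i^2$ with $G_i\in \R[X]_{(2k+1)d}$. Equivalently, in terms of Hermitian squares $G\ol G$, we want sections $G$ of $\sO_X((2k+1)d)$ over $\C$ with $\div(G)=(2k+1)D_\R+\sum_j \bigl(a_jQ_j+b_j\ol Q_j\bigr)$, where $a_j+b_j=2k+1$; then $F^{2k+1}=c\,G\ol G=c\bigl((\mathrm{Re}\,G)^2+(\mathrm{Im}\,G)^2\bigr)$. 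Third, linear equivalence: such a $G$ exists iff the divisor $(2k+1)D_\R+\sum_j(a_jQ_j+b_j\ol Q_j)$ is linearly equivalent to $(2k+1)dH$. Because $F$ already gives $2(2k+1)D_\R+(2k+1)E\sim 2(2k+1)dH$, this reduces to a condition in $\Pic^0(X)$. The condition is that the class $\sum_j (a_j-b_j)\,\tfrac12[Q_j-\ol Q_j]$, suitably interpreted, equal a fixed class $\eta$, namely the 2-torsion-type obstruction from the proof of \Cref{thm:many_zeroes_implies_stubborn}, now living in $J(\C)$. Smoothness ensures that every effective divisor of degree at least $(2k+1)d$ that is linearly equivalent to $(2k+1)dH$ is cut out by a section, provided $h^1$ vanishes, which holds for large $k$ by Serre vanishing/Riemann–Roch on the smooth curve.

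The main obstacle is this third step: showing that, by choosing the odd integers $a_j-b_j\in\{-(2k+1),\dots,2k+1\}$ (or by allowing sums of several Hermitian squares), we can hit the required class. A single conjugate pair gives the elements $m[Q-\ol Q]$, which generate only a cyclic subgroup, so one Hermitian square may not suffice. My first attempt would therefore use sums of many Hermitian squares and invoke Scheiderer's local-global principle instead of explicit divisors. Since $F$ has a non-real zero, the real zeros of $F$ are \emph{not} the full zero set. The obstruction in \cite{Scheiderer2011} to $F$ being a sum of squares in the semiring localized at powers comes only from real zeros, where the local condition is satisfied because the multiplicities are even. The non-real zeros impose no sign obstruction, and they supply the necessary flexibility in \cite[Cor.~3.4.10]{baldiblekhermannsinn24}-type statements, which show that $F\cdot(\text{positive form})$ is sums of squares. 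Combining this with the convexity and product tricks of \Cref{thm:convexity}, we would aim to upgrade such a statement to an odd power $F^{2k+1}$ being a sum of squares. The delicate point, to check carefully, is precisely this upgrade from ``$F$ times a strictly positive form of large degree is a sum of squares'' to an odd-power certificate.
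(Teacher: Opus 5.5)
Your proposal does not reach a proof. You never produce an odd-power certificate, and the route you suggest for the ``upgrade'' cannot work as stated.

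The problem is that a certificate $F\cdot P=\sum_i G_i^2$ with $P$ strictly positive on $X(\R)$ also exists for real-rooted \emph{stubborn} forms. Suppose $\div(F)=2D'$ with $D'$ real. Write $\sO_X(D')\cong\sO_X(dH)\otimes\mathcal N$ with $\mathcal N^{\otimes 2}\cong\sO_X$, normalized so that $F=s^2$ for the section $s$ with divisor $D'$. For $e\gg 0$, two real sections $t_1,t_2$ of $\sO_X(eH)\otimes\mathcal N$ without common zeros give $F\,(t_1^2+t_2^2)=(st_1)^2+(st_2)^2$ in $\R[X]$. So starting from such a multiplier statement, manipulations like those in \Cref{thm:convexity} cannot yield $F^{2k+1}$ as a sum of squares; those manipulations only combine forms that \emph{already} have odd-power certificates. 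The non-real zeros must be used again, and your sketch does not say how.

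For the same reason, your heuristic that the obstruction ``comes only from real zeros, where the local condition is satisfied'' is wrong in this projective setting. Real-rooted nonnegative forms satisfy exactly the same local conditions, yet they are stubborn by \Cref{thm:many_zeroes_implies_stubborn}. The obstruction is the global $2$-torsion class $\eta$, not anything local.

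The missing idea is to divide out the real zeros and apply Scheiderer's Positivstellensatz to a strictly positive section of an \emph{ample} line bundle.
\begin{enumerate}
\item Since real zeros on a smooth curve have even multiplicity, $D_\R=\tfrac12\div(F)_\R$ is an integral real divisor. Put $D=dH-D_\R$. Its degree is $\tfrac12\deg E>0$ precisely because $F$ has a non-real zero, so $D$ is ample on $X$.
\item Take $k$ odd with $kD$ very ample, and let $L$ be the section of $\sO_X(kD_\R)$ with divisor $kD_\R$. Then $F^k/L^2\in H^0(X,\sO_X(2kD))$ has divisor $kE$, which has no real points, so it is strictly positive on $X(\R)$.
\item By \cite[Thm.~4.11]{Scheiderer2011}, $(F^k/L^2)^\ell=\sum_i G_i^2$ with $G_i\in H^0(X,\sO_X(k\ell D))$ for all large $\ell$.
\item Multiplying back, $F^{k\ell}=\sum_i (G_iL^\ell)^2$ with $G_iL^\ell\in H^0(X,\sO_X(k\ell dH))$. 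Choose $\ell$ odd and large enough that $X$ is $k\ell d$-normal. Then the $G_iL^\ell$ lie in $\R[X]_{k\ell d}$ and $k\ell$ is odd.
\end{enumerate}
This also explains the dichotomy with \Cref{thm:many_zeroes_implies_stubborn}. For real-rooted $F$ the twisted bundle $\sO_X(D)$ has degree $0$ (it is the $2$-torsion bundle $\mathcal N$), so it is not ample and no such certificate arises. Your single-Hermitian-square analysis is correct as far as it goes, but the twisting argument makes it unnecessary.
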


\begin{proof}
    Suppose that $F$ is not real rooted.
    Let $H$ be a hyperplane divisor, and define the divisor $D = dH - \frac{1}{2} \div(F)_\R$. 
    Since $F$ is not real rooted, $\deg(D) > 0$ and so $D$ is ample. Pick an odd $k\in \N$ such that $kD$ is very ample and let $L$ be a section of $\sO_X(k \frac12 \cdot \div(F)_\R)$.
    Then $F^{k}/L^{2} \in H^0(X, \sO_X(k D)^{\otimes 2}) = H^0(X, \sO_X(2k D))$ because the divisor of zeroes of the section $F^{k}/L^{2}$ is linearly equivalent to $k \cdot \div(F) - 2 \cdot \frac12 \cdot k \cdot \div(F)_\R = k\cdot (\div(F) - \div(F)_\R) = k \cdot 2 \cdot D$ (see e.g. \cite[II~Prop.~7.7]{zbMATH03572315}). Since $\div(F) - \div(F)_\R$ has no real point, the section $F^{k} /L^{2}$ is strictly positive on $X(\R)$. It follows from \cite[Theorem.~4.11]{Scheiderer2011} that $(F^{k}/L^{2})^\ell \in H^0(X, \sO_X(2k\ell D))$ is a sum of squares  $\sum_i G_i^2$ for all sufficiently large $\ell$. 
    
    We now want to conclude that some odd power of $F$ is a sum of squares. Clearing denominators above, we get
    \[ F^{k\ell} = \sum_i G_i^2 L^{2\ell} \in H^0(X,\sO_X(2k\ell d H)),  \]
    and the divisor of each summand is $k\ell d H$. Choosing $\ell$ odd and sufficiently large such that $X$ is $k \ell d$-normal, the sections $G_iL$ correspond to elements $\widetilde{G_i}\in \R[X]_{k\ell d}$ so that $F = \sum_i \widetilde{G_i}^2$.
\end{proof}

We now show that stubborn polynomials always exist in sufficiently large degree for smooth curves of positive genus. Smooth rational curves have no stubborn polynomials of any degree, essentially because every nonnegative binary form is a sum of squares, see the discussion in Section \ref{sec:smoothnostub} for details.
\begin{theorem}\label{thm:large degree}
    Let $X\subset \P^n$ be an irreducible smooth totally real projective curve of genus $g \ge 1$. For all sufficiently large $d$ there exists $F \in \R[X]_{2d}$ which is stubborn.
\end{theorem}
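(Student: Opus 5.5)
By \Cref{thm:many_zeroes_implies_stubborn}, it suffices to construct, for all large $d$, a form $F\in\R[X]_{2d}$ with three properties: it is nonnegative on $X(\R)$, it is real-rooted, and it is not a sum of squares. Since $X$ is smooth, every zero lies at a smooth point, and for $d$ large $X$ is $d$-normal by Serre vanishing. As in the proof of \Cref{thm:many_zeroes_implies_stubborn}, such an $F$ gives the class $\eta=d[H]-\tfrac12[\div(F)]$, a real $2$-torsion point of $J=\Pic^0(X)$. Conversely, let $E$ be a real effective divisor supported on $X(\R)$, with even multiplicity at every point of a component of $X(\R)$ where we want to avoid sign changes. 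If $2E\sim 2dH$ but $E\not\sim dH$, then $2E$ is cut out by a form $F$ of degree $2d$ (by $d$-normality). This $F$ is real-rooted. It is not a square, because $E\not\sim dH$ excludes $F=G^2$ with $G$ of degree $d$. Up to sign, and after choosing $E$ suitably, it is nonnegative. A real-rooted $F$ that is not a square cannot be a sum of squares, by the same argument as at the end of the proof of \Cref{thm:many_zeroes_implies_stubborn}.

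The plan is therefore the following. Since $g\ge1$, the group $J(\R)$ is a compact real Lie group of dimension $g$ whose identity component is a torus $(\R/\Z)^g$, so $J(\R)[2]$ contains a nonzero element $\eta$ in the identity component. The key step is to show that $\eta$ can be represented as $d[H]-[E]$, where $E$ is an effective divisor of degree $dn'$ (with $n'=\deg X$) supported on $X(\R)$, and where moreover $2E=\div(F)$ for some $F\ge0$. To arrange nonnegativity, I would take $E=2E'$ with $E'$ a real-point divisor, so $F$ vanishes to order $4$ at each zero and has no sign change. This reduces the task to showing that the map
\[
X(\R)^{(m)}\to J(\R),\qquad E'\mapsto [E']-m[P_0],
\]
where $P_0\in X(\R)$ is fixed, has image containing all of the identity component for $m\ge g$. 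For the identity component of $J(\R)$ this is classical: the Abel--Jacobi map restricted to products of a single real oval is surjective onto the identity component once $m\ge g$. The concrete target is then the relation
\[
d[H]-2[E']=\eta, \qquad\text{i.e.}\qquad [E']-m[P_0]=\tfrac12\big(d[H]-2m[P_0]-\eta\big).
\]
Here the halving must be chosen inside the identity component. This is possible because the identity component is a divisible group, once the class $d[H]-2m[P_0]$, viewed in $\Pic^0$ with $2m=dn'$, lies in it.

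The main obstacle is this parity and component issue. Different choices of $d$ place $d[H]-2m[P_0]$ in different components of $J(\R)$, and the divisors we use must have even degree on each oval. I would handle it by replacing $d$ with $2d$ where needed, or by allowing $E'$ to distribute points among the ovals so as to hit the correct component. Alternatively, one can appeal directly to \cite[Cor.~3.4.10]{baldiblekhermannsinn24}, which gives, for large $d$, nonnegative forms that are not sums of squares and have only real zeros (the extreme-ray construction). Combining either route with \Cref{thm:many_zeroes_implies_stubborn} and $d$-normality for large $d$ gives that $F$ is stubborn.
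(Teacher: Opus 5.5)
Your closing alternative is exactly the paper's proof, and it is complete: take the real-rooted nonnegative non-sum-of-squares forms $F_d\in\R[X]_{2d}$ supplied by \cite[Cor.~3.4.10]{baldiblekhermannsinn24} for large $d$, then apply \Cref{thm:many_zeroes_implies_stubborn} using $d$-normality for large $d$. The self-contained construction you present as the main plan, however, has a genuine gap at the nonnegativity step.

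If $\div(F)=2E$ and $d[H]-[E]=\eta$, then $F/\ell^{2d}$ (with $\div(\ell)=H$) has divisor $-2(dH-E)$. Moving $E$ within its real linear equivalence class multiplies this function by $c\,\psi^2$ with $c\in\R^*$. So the sign pattern of $F$ on the components of $X(\R)$ is, up to a global sign, an invariant of $\eta$. Thus $\pm F\ge 0$ is possible only if $\eta$ is a \emph{positive} $2$-torsion point \cite[Def.~3.4.4]{baldiblekhermannsinn24}, and positivity is not the same as lying in $J(\R)^0$. Taking $E=2E'$ does not help, since zeros of order $2$ already cause no sign change. It actually confines $\eta=d[H]-2[E']$ to a single coset of $2J(\R)=J(\R)^0$.

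Concretely, let $X\colon y^2z=x^3-xz^2$ and $O=[0:1:0]$. Since $\deg X=3$, your ansatz forces $d$ even, hence $\eta=2(\tfrac d2[H]-[E'])\in J(\R)^0$. The only nonzero $2$-torsion point of $J(\R)^0$ is $[P_3-O]$ with $P_3=[1:0:1]$. But $2(P_3-O)=\div((x-z)/z)$, and $x/z-1$ is negative on the oval over $[-1,0]$ and positive on the unbounded branch. So every non-square $F$ your scheme produces on this curve is indefinite, however $E'$ is distributed over the ovals. The positive nonzero class is $[P_1-O]$ with $P_1=[-1:0:1]$ (via $x/z+1\ge 0$). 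It lies in the other component and requires $E$ of odd degree on the oval. The existence of a nonzero positive $2$-torsion point for every $g\ge 1$ is exactly the nontrivial input contained in \cite{baldiblekhermannsinn24}.

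Separately, the ``classical'' real Jacobi inversion you invoke (a single oval, $m\ge g$) is false in general. Take the curves $X_t$ of \Cref{thm:deformation} built from a smooth quartic with one oval and $Q$ of degree $4$. Then $\deg X_t=8$, $g=21$, and $X_t(\R)$ is a single oval, so $J(\R)$ is connected. Surjectivity for $m=24\ge g$ would produce $24$ real points linearly equivalent to $3H_t$, hence cut out by a cubic, since plane curves are $3$-normal. But \Cref{thm:deformation} with $r=3$ says a cubic meets $X_t$ in at most $12$ real points for small $t>0$. Only a version for $m$ sufficiently large holds (finiteness of the totally real divisor threshold), and that is again a nontrivial theorem. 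Repairing your main route therefore amounts to reproving \cite[Cor.~3.4.10]{baldiblekhermannsinn24}, which is why the paper simply cites it.
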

\begin{proof}
    Since $g \ge 1$, it follows from \cite[Cor. 3.4.10]{baldiblekhermannsinn24} that for all sufficiently large $d$, there exists a real-rooted nonnegative form $F_d \in \R[X]_{2d}$ of degree $2d$ which is not a sum of squares. Moreover, for all sufficiently large $d$ we have $d$-normality, and therefore $F_d$ is stubborn by \Cref{thm:many_zeroes_implies_stubborn}. 
\end{proof}

We remark that in some of the previous theorems, we have assumed the curves to be irreducible. The added technical difficulty for reducible curves is that one can have stubborn polynomials vanishing on an irreducible component.
We study examples of this in the next subsection. 

\subsection{Stubborn polynomials on plane cubics}\label{sec:cubics}

The main point we would like to illustrate is that existence of stubborn polynomials on singular curves is a quite delicate question. To this end, we fully characterize existence of low degree stubborn forms on singular and possibly reducible plane cubics. 

\begin{ex}
    If $X$ is smooth, it is not hard to show directly that there exists an irreducible indefinite quadratic form $F$ that is nonnegative on $X$ with three real zeros of multiplicity $2$ (see also \cite[Prop.~4.1.1]{baldiblekhermannsinn24}), as shown in Fig.~\ref{fig:stubborn_on_irreducible_cubic}. By Theorem~\ref{thm:many_zeroes_implies_stubborn}, any such $F$ is stubborn.
\end{ex}

    \begin{figure}\label{pic:cub0}
        \includegraphics[width=8cm]{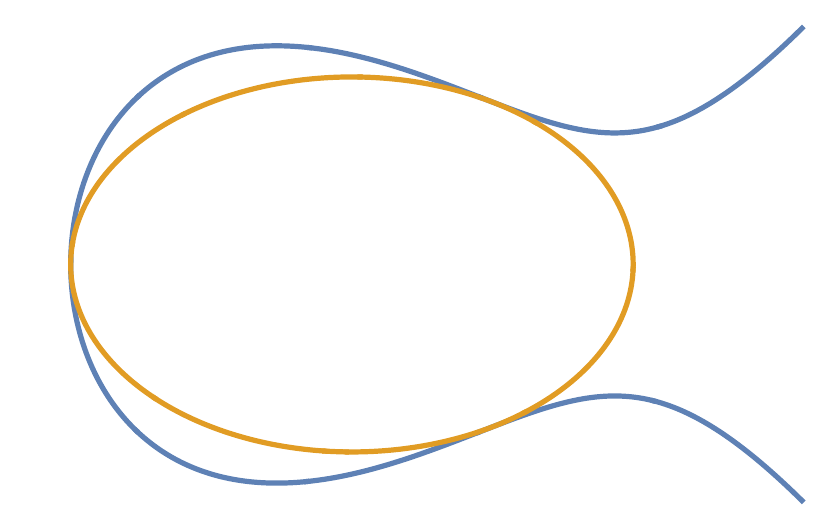}
    \caption{A stubborn quadratic on an irreducible cubic.}\label{fig:stubborn_on_irreducible_cubic}
    \end{figure}

\begin{ex}
    Next, we look at the irreducible singular cubics. There are three cases.

    (a) If $X$ is a nodal cubic with connected real locus, we may assume that $X$ is defined by $y^2z-x^2(x+z)=0$ after a projective change of coordinates. In this case, there are stubborn quadratic forms with three real roots in non-singular points, as in the smooth case. 
    For example, $F=y^2-6x^2+3xz+9z^2$ is a quadratic form of this kind. 
    All such forms are stubborn by \Cref{cor:plane_cubics}.
    
    (b) A cuspidal cubic $X$, defined without loss of generality by $y^2z-x^3=0$, has no stubborn forms in any degree. To see this, let $X_0$ be the affine chart $z=1$ on $X$, which is parametrized by $t\mapsto (t^2,t^3)$. This identifies the coordinate ring $\R[X_0]=\R[x,y]/(y^2-x^3)$ with the subring \[\R[t^2,t^3]=\{f\in \R[t]\,\colon f'(0)=0\}\] of the univariate polynomial ring $\R[t]$. 
    So let $f\in\R[t]$ be a nonnegative polynomial with $f'(0)=0$. If also $f(0)=0$, then write $f^3=g_1^2+g_2^2$ as a sum of two squares in $\R[t]$. Since $f$ is divisible by $t^2$, $g_1$ and $g_2$ are also divisble by $t^2$, say $g_j=t^2\wt g_j$ for $j=1,2$, so that $f^3=(t^2\wt g_1)^2+(t^2\wt g_2)^2$ is a sum of two squares in $\R[X_0]$.

    Now assume that $f(0)\neq 0$, which means that the corresponding form in $\P^2$ does not pass through the singularity of $X$. Since the set of non-stubborn forms is convex by Theorem ~\ref{thm:convexity}, it suffices to show that no extremal nonnegative form is stubborn. Just as in the smooth case, it can be shown that such forms intersect $X$ in real points only, by \cite[Thm.~1.1]{KummerZalar2025}. Hence $f$ is real-rooted, so that $f=g^2$ for some $g\in\R[t]$. If $g'(0)=0$, then $f$ is a square in $\R[X_0]$. If $g'(0)\neq 0$, then, since $f'(0)=(g^2)'(0)=0$, we must have $g(0)=0$, so that $g$ is of the form $g=a_1t+a_2t^2+\cdots$. It follows that $g^k$ is in $\R[X_0]$ for all $k\ge 2$, so that $f^3=(g^3)^2$ is a square and $f$ is not stubborn. 

    (c) Let $X$ be a nodal cubic with a solitary node defined by $y^2z-x^2(x-z)=0$. The chart $X_0$ with $z=1$ is parametrized by $t\mapsto (t^2+1,t(t^2+1))$. The node corresponds to $t=\pm i$, so that the coordinate ring $\R[X_0]$ consists of those $f\in\R[t]$ that satisfy $f(i)=f(-i)=\ol{f(i)}$, 
    \[
        \R[X_0] = \{f\in\R[t]\,\colon f(i)\in\R\}.
    \]
    Again, we can apply \cite[Thm.~1.1]{KummerZalar2025} and need only consider real-rooted polynomials. 
    If $f\in\R[t]$ has only real roots and satisfies $f(i)\notin\R$ but $f^2(i)\in\R$ (i.e.,~if $f^2 \in \R[X_0]$ but $f \notin \R[X_0]$), then $f^2(i)<0$, so that $f^2$ is not nonnegative on $X_0$. We also need to look at polynomials of the form $(t^2+1)^r\cdot f^2$ with $f$ real-rooted. But all such polynomials are sums of squares as soon as $r\ge 2$, because for even $r\ge 2$ they are squares in $\R[X_0]$, and for odd $r\ge 3$, we have 
    \[(t^2+1)^r\cdot f^2=\bigl((t^2+1)^{\frac{r-1}{2}}\cdot f\bigr)^2\cdot (t^2+1)=\bigl((t^2+1)^{\frac{r-1}{2}}\cdot f\cdot t\bigr)^2+\bigl((t^2+1)^{\frac{r-1}{2}}\cdot f\bigr)^2\] which is a sum of two squares in $\R[X_0]$. (Note that for $r=1$, the summands may not lie in $\R[X_0]$; indeed, $t^2+1$ itself is not a sum of squares in $\R[X_0]$.) In particular, no such polynomial can be stubborn. 
\end{ex}

There are a number of cases to consider when $X$ is reducible.
    
\begin{ex}
    Suppose that $X=C\cup L$ is the union of a smooth conic $C$ defined by a non-degenerate quadratic form $Q$, and a line $L$ defined by a linear form $\ell$. Since $X$ is assumed to be totally real, $Q$ must be indefinite, so that $C$ is an ellipse. 

    (a) If $C$ and $L$ meet in two distinct real points, then $X$ admits a stubborn quadratic form. For example, the union of two distinct tangents to $C$ meeting $L$ in the same point defines such a quadratic form by \Cref{cor:plane_cubics}; see Fig.~\ref{fig:stubborn_on_circle_and_secant}. We can also verify this directly:
    let $p$ and $q$ denote the intersection points of $C$ and $L$, and let $\ell_1$ and $\ell_2$ be linear forms defining two tangents at points $a$ and $b$ on $C$ such that both $\ell_1$ and $\ell_2$ vanish on $L$ at the same point $c$. Then $\ell_1\ell_2$ is nonnegative on $X$. It is stubborn, because if we had $(\ell_1\ell_2)^k=\sum F_i^2$ in $\R[X]$ then each $F_i$, of degree $k$, would have to vanish at $a$, $b$ and $c$ to order $k$ and have no further zeros on $X$. After dehomogenizing with respect to a line not meeting $C(\R)$, as in the picture, the restriction of each $F_i$ to $C$ will have opposite signs in $p$ and $q$, while its restriction to $L$ will have the same sign at $p$ and $q$, a contradiction.
    \begin{figure}\label{pic:cub1}
        \includegraphics[width=8cm]{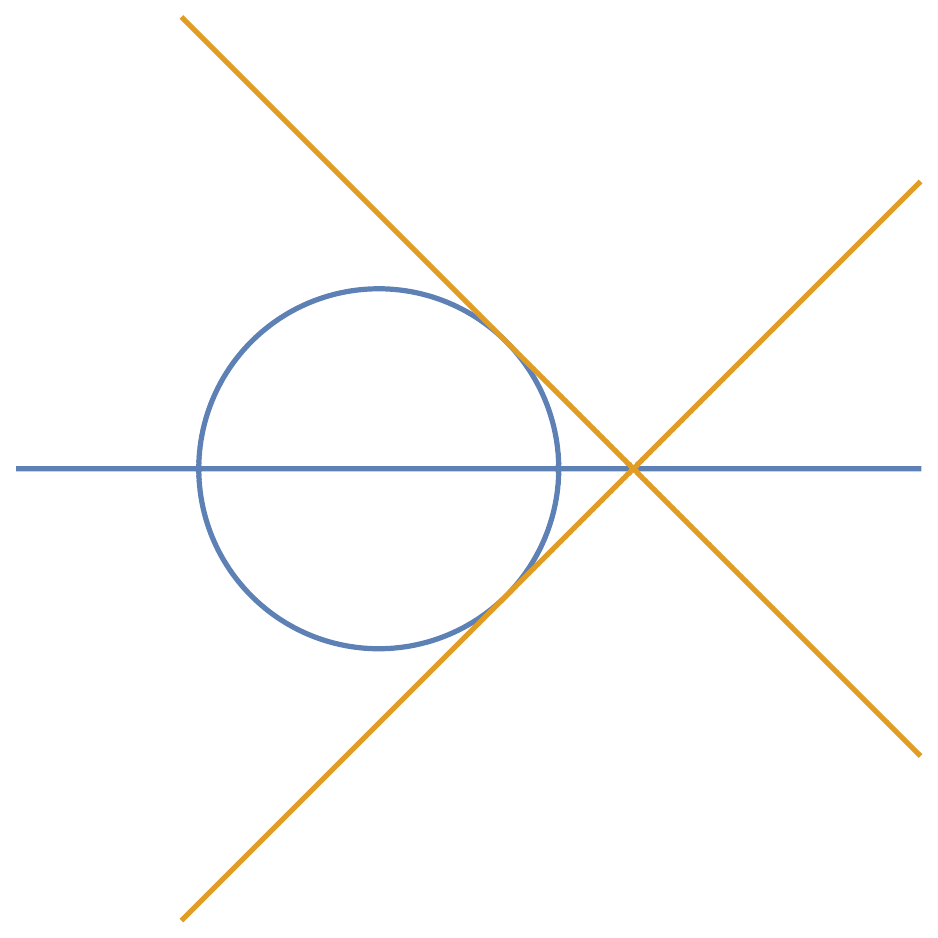}
    \caption{A stubborn quadric for a circle with a secant line.}\label{fig:stubborn_on_circle_and_secant}
    \end{figure}

    (b) If $L$ is tangent to $C$, then $X=C\cup L$ does not admit a stubborn quadratic form. We may assume after a change of coordinates that $Q=x^2+y^2-z^2$ and $\ell=z-x$. 
    A nonnegative quadratic form $F$ with only real zeros on $X$ is either a square of a linear form or it must be tangent to $L$ and $C$ at the intersection point $a=[1:0:1]$ and tangent to $C$ at a further real point $b$. (To see this, let $C'$ be the conic defined by $F$. Since $C'$ is tangent to $C$ at two points, so are the dual conics, hence $C$ and $C'$ share exactly two tangents, one of which must be $L$.)
    For any such $b$, there is a one-dimensional family of nonnegative quadratic forms, whose extreme points are $Q$ itself and the product of $\ell$ with the tangent line through $b$. We can verify this directly: After a change of coordinates, we may assume $b=[-1:0:1]$, then $F_0=Q$ and $F_1=(z-x)(z+x)$ span the family $\{F_\lambda=\lambda_0 F_0+\lambda_1 F_1\ |\ \lambda\in\P^1(\R)\}$ of all conics tangent to $C$ at $a$ and $b$. Here, $F_\lambda$ is nonnegative if and only $\lambda_0\lambda_1\ge 0$, so that $F_0$ and $F_1$ are the extreme points. (Note that $F_{[1,1]}=y^2$ is the line through $a$ and $b$ squared.) Neither $F_0$ nor $F_1$ is stubborn, so there are no stubborn quadratic forms by \Cref{thm:convexity}.
    
    However, $X$ does admit a stubborn form of degree $4$: there exists a quartic $F$ whose real locus consists of two ovals, each of which is tangent to $C$ at two points and tangent to $L$ at one point.  It can then be verified through symbolic computation that\\[-2em]

        {\footnotesize
    \begin{align*}
        F =& -\biggl(12 \sqrt{2}+5 \sqrt{936 \sqrt{2}+1337}+25\biggr) x^4-4 \biggl(15 \sqrt{2}+\sqrt{1872 \sqrt{2}+2674}+18\biggr) x^3 z\\ &+x^2 \biggl(\biggl(48 \sqrt{2}-2 \sqrt{936 \sqrt{2}+1337}+70\biggr) y^2-12 \biggl(2 \sqrt{2}+3\biggr) z^2\biggr)+8 \bigl(2 \sqrt{2}+3\bigr) x z \bigl(y^2-z^2\bigr)\\ &+\bigl(y^2-z^2\bigr) \biggl(\biggl(12 \sqrt{2}-\sqrt{936 \sqrt{2}+1337}+19\biggr) y^2+\biggl(36 \sqrt{2}+\sqrt{936 \sqrt{2}+1337}+53\biggr) z^2\biggr)
    \end{align*}
    }
    has the desired property (see Fig.~\ref{fig:stubborn_on_circle_and_tangent}). 
    \begin{figure}
        \includegraphics[width=8cm]{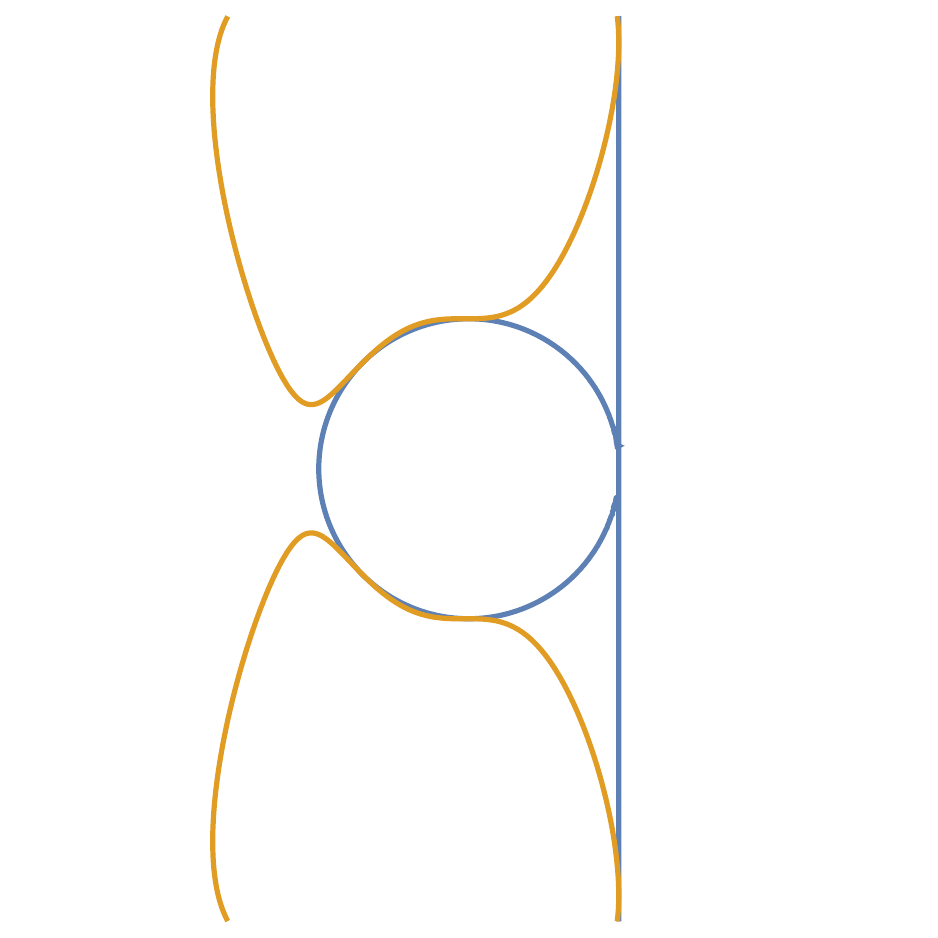}
        \includegraphics[width=8cm]{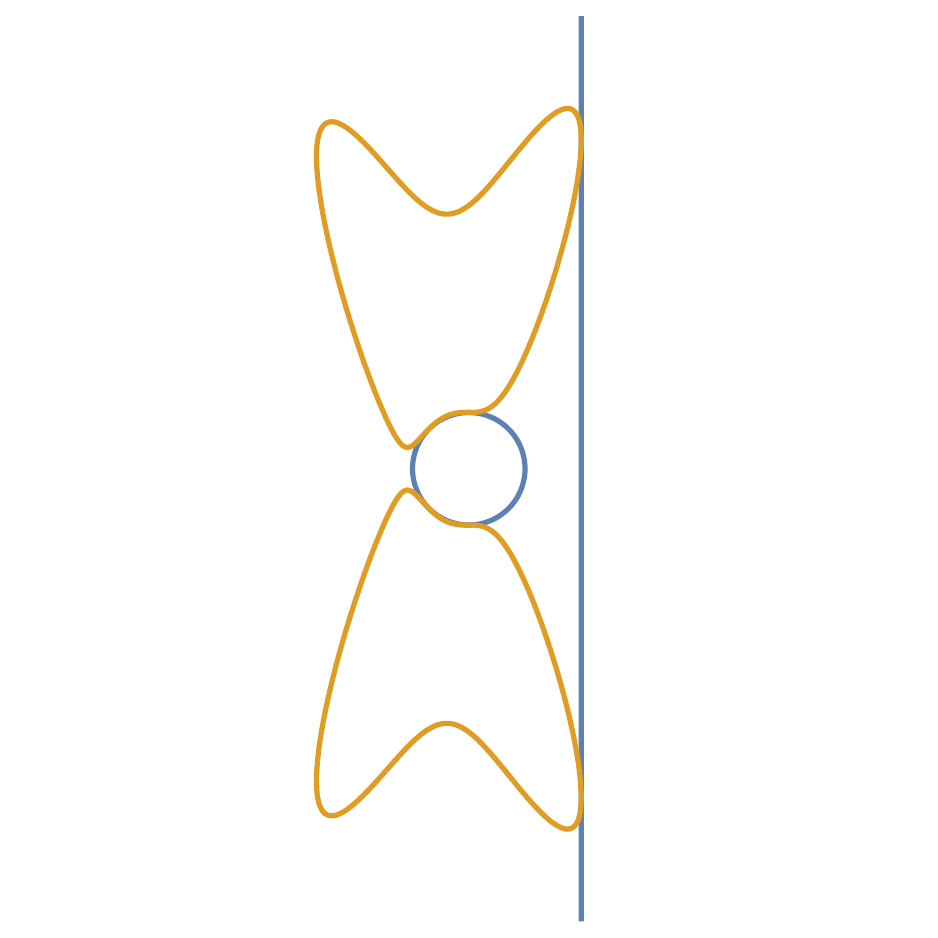}
    \caption{A stubborn quartic for a circle with a tangent or passing line.}\label{fig:stubborn_on_circle_and_tangent}
    \end{figure}

    (c) If $C$ and $L$ have no real intersection point, then $X=C\cup L$ admits no stubborn quadratic forms. This is easier to see than in the tangent case, since the only extremal nonnegative quadratic forms with only real zeros on $X$ are squares: The product of two tangent lines to $C$ has opposite signs on $C$ and $L$, while an irreducible, indefinite quadratic $F$ cannot be nonnegative on $C$ and $L$ while being tangent to both. (One way to see this is to consider the affine plane $\P^2\setminus L$ in which $C$ is an ellipse while the conic $P$ defined by $F$ is tangent to $L$, and therefore a parabola. If $P$ is tangent to $C$ at two points, then $C$ must be contained in the closure of the convex component of $\P^2(\R)\setminus P(\R)$, while $L$ is contained in the other, so that $F$ changes sign on $X$.) 

    But $X$ again admits stubborn forms of degree $4$ of the same type as in (b) (see Fig.~\ref{fig:stubborn_on_circle_and_tangent}). To see this, we may assume after a change of coordinates that $Q=x^2+y^2-z^2$ and $\ell=2z-x$. (To see that such a change of coordinates is indeed possible, observe that the group of projective transformations that preserves the ellipse $C$ acts transitively on points where $Q$ is negative; see for example \cite[Thm.~1.1.6]{Ratcliffe19}) A similar symbolic computation as in case (b) will produce a stubborn quartic, e.g.\\[-2em]

    {\footnotesize
    \begin{align*}
    F = &\biggl(18 \sqrt{2}-5 \sqrt{3516 \sqrt{2}+6539}-131\biggr) x^4-4 \biggl(57 \sqrt{2}+\sqrt{7032 \sqrt{2}+13078}+18\biggr) x^3 z\\ &+2 x^2 \biggl(\biggl(54 \sqrt{2}-\sqrt{3516 \sqrt{2}+6539}+97\biggr) y^2-6 \biggl(4 \sqrt{2}+9\biggr) z^2\biggr)+8 \bigl(4 \sqrt{2}+9\bigr) x z \bigl(y^2-z^2\bigr)\\ &+\bigl(y^2-z^2\bigr) \biggl(\biggl(18 \sqrt{2}-\sqrt{3516 \sqrt{2}+6539}+65\biggr) y^2+\biggl(78 \sqrt{2}+\sqrt{3516 \sqrt{2}+6539}+151\biggr) z^2\biggr)
    \end{align*}
    }
\end{ex}

\begin{ex}
    If $X$ consists of three distinct lines, it is defined by $\ell_1\ell_2\ell_3=0$ for three linear forms $\ell_1,\ell_2,\ell_3$.
    If the lines meet in three distinct points, then $X$ admits a stubborn quadratic form, namely an ellipse in the bounded region of $\P^2(\R)\setminus X(\R)$ tangent to all three lines.

    If the three lines pass through a single point, then $X$ does not admit a stubborn quadratic form. After a linear change of coordinates, we may assume $\ell_1=x$, $\ell_2=y$, $\ell_3=x-y$. 
    It again suffices to check that no extremal nonnegative quadratic form is stubborn. 
    An irreducible conic cannot be tangent to all three lines, so it cannot be stubborn by \cite[Thm.~1.1]{KummerZalar2025}. The other case we need to consider is a product of two lines through the intersection point $[0,0,1]$ of $\ell_1,\ell_2,\ell_3$. One such extremal form is the product of two of the lines. For example, $xy$ is nonnegative on $X$ and not a sum of squares, but $(xy)^3 = x^4y^2 - x^3y^2(x-y)=x^4y^2-x^2y\cdot (\ell_1\ell_2\ell_3)$ shows that $xy$ is not stubborn on $X$. 

    However, $X$ does again admit a stubborn form $F$ of degree $4$. Namely, $F$ can be chosen such that $\ell_1,\ell_2,\ell_3$ are bitangents of the quartic and $F$ has constant sign on $X(\R)$. Explicitly, it can be verified through symbolic computation that
    \[
    F = \det\left(
    \begin{array}{cccc}
    0 & x & y & 4 x+y-z \\
    x & 0 & x+4 y-z & x-y \\
    y & x+4 y-z & 0 & x-y+4 z \\
    4 x+y-z & x-y & x-y+4 z & 0 \\
    \end{array}
    \right)
    \]
    has the desired property (see Fig.~\ref{fig:stubborn_on_three_lines}). For this, it suffices to check that $F$ is irreducible and meets $\ell_1$, $\ell_2$ and $\ell_3$ each in a pair of real double points, since an irreducible quartic will have the same constant sign on all its bitangent lines.
    \begin{figure}
        \includegraphics[width=8cm]{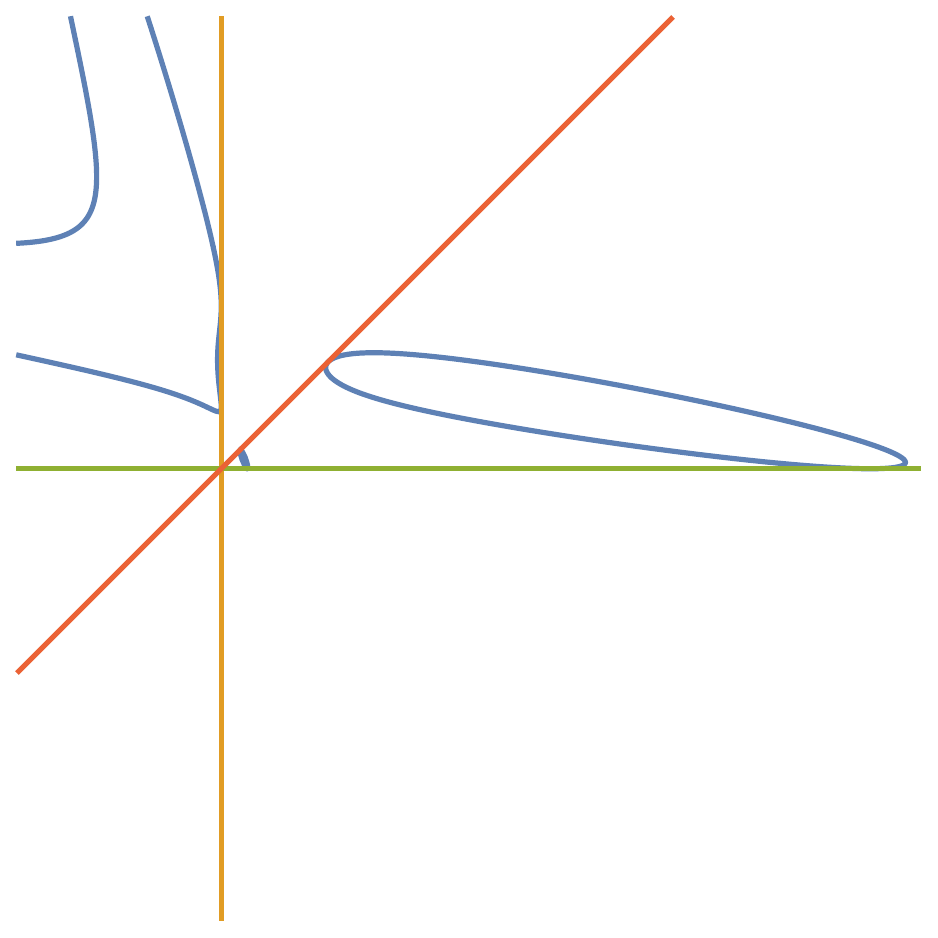}
    \caption{A stubborn quartic on three lines through a point.}\label{fig:stubborn_on_three_lines}
    \end{figure}
\end{ex}

\medskip
The preceding series of examples covers all totally real plane cubics. We sum up our findings as follows.

\begin{theorem}\label{thm:planecubics}
Let $X\subset\P^2$ be a totally real cubic curve. 
\begin{enumerate}
    \item $X$ admits a stubborn quadratic form in the following cases:
    \begin{enumerate}[label = {(\theenumi.\arabic*)}]
        \item $X$ is a smooth cubic;
        \item $X$ is a nodal cubic with $X(\R)$ connected;
        \item $X$ is a smooth conic with a secant line;
        \item $X$ consists of three lines in general position.
    \end{enumerate}
    \item $X$ admits no stubborn quadratic form, but does admit a stubborn quartic form in the following cases:
    \begin{enumerate}[label = {(\theenumi.\arabic*)}]
        \item $X$ is a smooth conic with a tangent line;
        \item $X$ is a smooth conic with a passing line;
        \item $X$ consists of three distinct lines through a single point.
    \end{enumerate}
    \item $X$ admits no stubborn forms of any degree in the following cases:
    \begin{enumerate}[label = {(\theenumi.\arabic*)}]
        \item $X$ is a cuspidal cubic;
        \item $X$ is a nodal cubic with a solitary node;
        \item $X$ consists of one or two lines (with repeated factors).\qed
    \end{enumerate}
\end{enumerate}
\end{theorem}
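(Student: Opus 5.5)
The theorem collects the examples above, so the proof is a case-by-case assembly. First we check that the list exhausts all totally real plane cubics. A reduced cubic is irreducible, a conic plus a line, or three lines. An irreducible cubic is smooth, nodal (with a crossing or solitary node) or cuspidal. For a conic plus a line, total reality forces the conic to be a real ellipse, and the line is secant, tangent or passing. For three lines, they are either concurrent or in general position. Nonreduced cubics are covered by (3.3), with $X$ taken as its reduced line arrangement. Totally real also excludes a conic without real points and complex conjugate lines; conjugate lines together with a real line reduce to a single real line in $X(\R)$, so that case is not totally real and is omitted.

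For part (1), in each case we exhibit a nonnegative quadric that is not a sum of squares and is real-rooted with zeros at smooth points, and then invoke \Cref{cor:plane_cubics}. Plane curves are $d$-normal, and the corollary needs no irreducibility. The constructions are: three real tangency points for the smooth and crossing-nodal cubics; a pair of tangents meeting on $L$ for the secant case; and the inscribed ellipse for three lines in general position. For the reducible cases, \Cref{cor:plane_cubics} is stated for totally real curves without irreducibility, but the $2$-torsion argument in \Cref{thm:many_zeroes_implies_stubborn} uses the generalized Jacobian of a reducible curve. I would double-check that, or fall back on the direct sign arguments already given. We must also verify in each case that the form is not a sum of squares. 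This follows because a sum-of-squares representation would force a square with the prescribed real divisor, which gives a sign contradiction along the components, as in the secant argument.

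For parts (2) and (3), \Cref{thm:convexity} reduces stubbornness of quadrics to the extreme rays of $P_X$. By \cite[Thm.~1.1]{KummerZalar2025}, non-square extreme rays are real-rooted, so it suffices to analyze real-rooted extremal forms. In the tangent, passing and concurrent cases we classify the real-rooted extremal quadrics: $Q$ and $\ell\cdot(\text{tangent})$ in the tangent case, only squares in the passing case, and products of two lines through the vertex in the concurrent case. We then exhibit an odd power of each that is a sum of squares modulo $I(X)$, for example $(xy)^3=x^4y^2-x^2y\,\ell_1\ell_2\ell_3$. The quartic examples are stubborn by \Cref{cor:plane_cubics}, once we verify symbolically that they are nonnegative, real-rooted and not sums of squares.

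For part (3) we use the parametrizations. In the cuspidal case $\R[t^2,t^3]$ absorbs cubes of real-rooted square roots. In the solitary-node case $\R[X_0]=\{f\colon f(i)\in\R\}$ and the identity $(t^2+1)^r f^2$ is used. For lines with multiplicity, $X$ is a line or two lines, where every nonnegative form restricts to nonnegative binary forms; gluing at the intersection point gives sums of squares of odd powers. These rings are subrings of $\R[t]$ (respectively of products of binary-form rings), so every degree is handled uniformly. We also need to pass from the affine chart to all degrees projectively.

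The main obstacle is twofold. The first is exhaustiveness of the extremal real-rooted classification in case (2), such as the dual-conic argument in the tangent case. The second is the two-line case of (3), where one must show that odd powers of nonnegative forms whose values at the crossing point are compatible can be written as a sum of squares on the union. I would handle it by writing $F=(a,b)$ on the two components, using that $F^3$ has square roots vanishing to high order that glue through $\R[X]=\{(g,h)\colon g(p)=h(p)\}$.
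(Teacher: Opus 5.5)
Your proposal is essentially the paper's proof. The theorem is an assembly of the preceding examples: \Cref{cor:plane_cubics} gives the existence statements, \Cref{thm:convexity} together with \cite[Thm.~1.1]{KummerZalar2025} reduces non-existence to real-rooted extremal forms, and the parametrizations $\R[t^2,t^3]$ and $\{f\colon f(i)\in\R\}$ handle the cuspidal and solitary-node cases.

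Three small remarks.

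\textbf{Reducible curves in \Cref{cor:plane_cubics}.} Your concern can be settled directly. A plane cubic is connected, and the zeros of $F$ avoid its singular points. So a summand $G_i$ in $F^{2k+1}=\sum G_i^2$ cannot vanish identically on just one component: it would then vanish at an intersection point, while on the other component its divisor must equal $\tfrac{2k+1}{2}\div(F)$. Hence each $G_i^2$ is a positive multiple of $F^{2k+1}$, and the argument of \Cref{thm:many_zeroes_implies_stubborn} goes through unchanged.

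\textbf{Part (1), not a sum of squares.} This is immediate. Since $I(X)$ contains no quadrics, a quadric is a sum of squares on $X$ only if it is positive semidefinite, and all the exhibited quadrics are indefinite.

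\textbf{Case (3.3) with two lines.} The paper does not argue this case. Your idea of square roots vanishing to high order only covers forms with $F(p)=0$ at the crossing point $p$. The clean argument needs no odd power at all. Write $F|_{\ell_1}=\alpha^2+\beta^2$ and $F|_{\ell_2}=\gamma^2+\delta^2$ as sums of two squares of binary forms. Then rotate $(\gamma,\delta)$ so that $(\gamma(p),\delta(p))=(\alpha(p),\beta(p))$; this is possible because both vectors have norm $\sqrt{F(p)}$. Now $F=(\alpha,\gamma)^2+(\beta,\delta)^2$ in $\R[X]=\{(g,h)\colon g(p)=h(p)\}$. Equivalently, $\nu_d(X)$ is a variety of minimal degree, so every nonnegative form on two lines is already a sum of squares. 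The single-line case is trivial.
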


\subsection{Singular curves and positive $2$-torsion points}
\label{sec:2torsion}
In the previous section, we discussed the existence of stubborn forms on singular and reducible curves, mostly using explicit computations, possible only because we considered plane cubics. For smooth curves, we showed more conceptually that stubbornness is equivalent to real-rootedness, and we used this in \Cref{thm:large degree} to prove that stubborn polynomials of sufficiently high degree exist on all smooth curves of positive genus. We now discuss the necessary ingredients for possible extensions of this result to the singular case.

In \cite{baldiblekhermannsinn24}, the existence of nonnegative polynomials with the maximal number of real zeros is deduced from the existence of \emph{positive} $2$-torsion points in the Jacobian, see \cite[Def.~3.4.4]{baldiblekhermannsinn24} and \cite[Cor.~2.3]{KummerZalar2025}. This definition can be extended to singular curves, by studying $2$-torsion points of the generalized Jacobian instead. For smooth curves we know that there are $2^g$ positive $2$-torsion points, but for singular curves, computing their number is still an open question. However, if $X(\R)$ is connected then every polynomial with zeroes on $X(\R)$ of even multiplicity does not change sign on $X(\R)$, and thus \emph{every} {real} $2$-torsion point of the generalized Jacobian is a positive $2$-torsion point.

We can also connect $2$-torsion points with effective totally real divisors on $X$ by \cite[Thm.~B]{monnier04}. In particular, this result applies if the singularities of $X$ are all real.
Combining these two ingredients, we obtain the following result.

\begin{Thm}
\label{thm:daniel_bis_monnier}
Let $X\subset\P^n$ be an irreducible real curve. Assume that $X(\R)$ is connected and that all singularities of $X$ are real. If the generalized Jacobian $J_X$ has a non-trivial $2$-torsion point, then $X$ admits stubborn nonnegative polynomials (of some even degree) in all sufficiently high degrees.
\end{Thm}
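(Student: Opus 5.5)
The plan is to turn a non-trivial $2$-torsion point of $J_X$ into a nonnegative, real-rooted form that is not a sum of squares, and then apply \Cref{thm:many_zeroes_implies_stubborn}. Let $\eta\in J_X$ be non-trivial with $2\eta=0$.

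First we make $\eta$ real. Complex conjugation acts on the finite group $J_X[2]$ and fixes $0$. If $\eta$ is not real, then $\eta+\bar\eta$ is real and $2$-torsion. If that sum is trivial, $\eta$ is real after all. So either $\eta$ or $\eta+\bar\eta$ is a non-trivial real $2$-torsion point, unless $\eta+\bar\eta=0$ with $\eta$ non-real, which is impossible since then $\bar\eta=-\eta=\eta$. Hence we may assume $\eta\in J_X(\R)_2$, $\eta\neq 0$. Since $X(\R)$ is connected, every real $2$-torsion point is positive, as remarked just before the statement.

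Next we realize $\eta$ by a totally real divisor. All singularities are real, so \cite[Thm.~B]{monnier04} applies. For a fixed real hyperplane divisor $H$ and all sufficiently large $d$, it should give an effective divisor $E$ supported on smooth real points with $\eta=[dH]-[E]$ in $J_X(\R)$. Here we use that $J_X(\R)$ is generated by classes of divisors supported on smooth real points, and that $d$ can be enlarged by adding hyperplane sections, which can be moved to real transverse ones.

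Then $2dH-2E$ is principal, so $2E\sim 2dH$, and for $d$ large with $X$ $d$-normal there is $F\in\R[X]_{2d}$ with $\div(F)=2E$. All zeros of $F$ are real smooth points of even multiplicity, and $X(\R)$ is connected, so $F$ has constant sign on $X(\R)$; after replacing $F$ by $-F$ we get $F\ge 0$. Because $\eta\neq0$, $F$ is not the square of any $G\in\R[X]_d$ (otherwise $E\sim dH$), and since it is real-rooted it is not a sum of squares either, by the argument at the end of the proof of \Cref{thm:many_zeroes_implies_stubborn}. That theorem then shows $F$ is stubborn. Finally, replacing $d$ by $d+m$ and $E$ by $E+mH'$ for a real hyperplane $H'$ that meets $X$ only in real smooth points does not change $\eta$. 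Alternatively, one can invoke Monnier's result degree by degree to get every sufficiently large $d$.

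The main obstacle is the second step. We need the precise form of \cite[Thm.~B]{monnier04} for singular curves: that each class in $J_X(\R)$, or in $\Pic(X)(\R)$ of large enough degree, contains an effective divisor supported on smooth real points, with a degree bound uniform in the class. My first attempt would be to apply it to the line bundle $\sO_X(d)\otimes\mathcal L_\eta^{-1}$, where $\mathcal L_\eta$ is the line bundle corresponding to $\eta$. Here real singularities are essential: they guarantee that generalized-Jacobian classes can be represented away from the singular locus by real points.
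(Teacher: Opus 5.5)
Your proposal is correct and is essentially the paper's proof. Monnier's Theorem~B gives a totally real effective divisor in the class $dH+\eta$ for all large $d$, normality lifts $2D$ to a form $F$, connectedness of $X(\R)$ fixes the sign of $F$, and \Cref{thm:many_zeroes_implies_stubborn} finishes; your reduction to a real $\eta$ (via $\eta+\bar\eta$) and your explicit check that $F$ is not a sum of squares are welcome details the paper leaves implicit. You should drop the aside about adding a real hyperplane section $H'$ that meets $X$ only in real smooth points, since such sections need not exist even when $X(\R)$ is connected (for example, the curves $X_t$ of \Cref{thm:deformation} admit no line meeting them only in real points). Instead, rely on Monnier's result degree by degree, as you also suggest.
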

\begin{proof}
    Let $\eta\in J_X$ be a non-trivial $2$-torsion point. Choose a real hyperplane that intersects $X$ in smooth points only, not necessarily real, and let $H$ be the intersection divisor. Since $X$ has no isolated or non-real singularities, \cite[Th.~B]{monnier04} implies that the class $\eta+d\cdot[H]$ contains an effective divisor $D$ supported on $X_{\mathrm{reg}}(\R)$ for sufficiently large $d$. Moreover, for all sufficiently large $d$, $X$ is $(2d\deg X)$-normal (see \Cref{def:dnormal}): since $2D$ is linearly equivalent to $2dH$, this implies that there exists a form $F\in\R[X]_{2d}$ with $\div(F)= \div(F)_\R =2D$. 
    
    Now, as $X(\R)$ is connected and all zeros of $F$ on $X(\R)$ are regular points with even multiplicity, $F$ has constant sign on $X(\R)$. Replacing $F$ by $-F$ if needed, we may assume that $F$ is nonnegative. It follows from \Cref{thm:many_zeroes_implies_stubborn} that $F$ is stubborn.
\end{proof}

\section{Curves with no stubborn polynomials}\label{sec:nostub}

As seen in Section \ref{sec:cubics}, stubborn polynomials on a curve may fail to exist, and we give several examples and constructions of this behavior. As we discuss below, on smooth rational curves stubborn polynomials do not exist in any degree, while on smooth curves of positive genus, stubborn polynomials may fail to exist only in low degree (see Theorem \ref{thm:large degree}).

\subsection{Smooth rational curves}\label{sec:smoothnostub}

Let $X\subset \mathbb{P}^n$ be a smooth totally real curve of genus $0$. 
By considering the Hilbert polynomial of $X$, we see that for sufficiently large $k$, the degree $k$ part $\R[X]_k$ of the coordinate ring of $X$ is the same as that of the rational normal curve. On the rational normal curve, nonnegative polynomials are always sums of squares, and therefore, there are no stubborn polynomials on $X$. As an explicit example, we can let $X$ be any  monomial curve whose parametrizing monomials include $s^d,s^{d-1}t, st^{d-1}$ and $t^d$.

\subsection{Smooth plane curves with no real-rooted quadrics}
Given nonnegative integers $d$, $n$ and $r$ with $d,n$ even, $r<d$, and $r\leq n$, we construct a real plane curve of degree $d+n$ that intersects any degree $r$ curve in at most $dr$ real points counted with multiplicities.
We owe the idea of the proof to Orevkov \cite{Orevkov}.

Let $X\subset \P^2$ be a smooth real projective plane curve defined by $F\in \R[x,y,z]_d$ and such that $X(\R)$ is non-empty and connected.
Let $Q, R\in \R[x,y,z]$ be positive definite forms of degrees $n$ and $d+n$ respectively, such that the curve defined by $R$ is smooth, and $Q$ is irreducible.
We consider a family of real curves $X_t\subset \P^2$ defined by forms $F\cdot Q+ t R$ of degree $d+n$.
Note that $X_t(\R)$ converges to $X(\R)$ in the Gromov-Hausdorff 
sense.
Also, for all sufficiently small $t>0$, the curve $X_t$ is smooth, and therefore totally real.

\begin{theorem}
\label{thm:deformation}
Let $r\geq 1$. For all sufficiently small $t>0$, the smooth real curve $X_t$ of degree $d+n$ defined above intersects any curve $C\subset \P^2$ of degree $r$ in at most $dr$ real points (counted with multiplicity).
In particular, $X_t$ does not admit stubborn forms of degree $r$.
\end{theorem}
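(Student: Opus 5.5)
Throughout, $r<d$ and $r\le n$. Real intersection points of $X_t$ with $C$ lie on $X_t(\R)$, and $X_t(\R)$ is contained in the region $\{F\cdot Q\le 0\}$, since $R>0$. Because $Q$ is positive definite, this region is $\{F\le 0\}$. As $t\to 0$, $X_t(\R)$ collapses onto $X(\R)$, and more precisely it lies in a thin tubular neighbourhood $U_t$ of $X(\R)$. Since $X$ is smooth and $X(\R)$ is connected, I expect $X_t(\R)$ to be the boundary of a thin band around the oval/pseudo-line $X(\R)$. For $n>0$ it should consist of two parallel copies of $X(\R)$, or a single doubled curve if $X(\R)$ is a pseudo-line. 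Near each point of $X(\R)$ it has two local branches, at signed distances roughly $\pm c\sqrt t$ in the normal direction.

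\textbf{Argue by contradiction with a limiting argument.} Suppose there are $t_k\to 0$ and curves $C_k$ of degree $r$, defined by $G_k$ normalized to unit coefficient norm, meeting $X_{t_k}$ in more than $dr$ real points counted with multiplicity. Pass to a subsequence with $G_k\to G$, where $G\neq 0$ has degree $r$. Intersection multiplicities are upper semicontinuous, so the real intersection points accumulate on $X(\R)\cap\{G=0\}$. Note that $G$ cannot vanish on $X$, since $\deg G=r<d$ and $F$ is irreducible. Hence $X\cdot C$ has total degree $dr$ by Bézout. The task is to show that each real point $p\in X\cap C$ of multiplicity $m_p$ absorbs at most $m_p$ real intersections of $X_{t_k}\cap C_k$ for large $k$, counting both branches of $X_{t_k}$ near $p$. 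Summing then gives at most $\sum_p m_p\le dr$.

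\textbf{The main obstacle is this local count.} The naive count gives $2m_p$, one contribution from each branch, which is exactly why the construction needs care; this is where I would follow Orevkov's idea. My first approach is to work in local coordinates $(u,v)$ at $p$ with $F=v$, so that the branches of $X_t$ near $p$ are $v=\pm\sqrt{t\,\phi(u,v)}$ with $\phi=R/Q>0$. I would then restrict $G_k$ to each branch and compare with the one-variable function $g(u)=G(u,v(u))$ on $X$. Alternatively, I would use a global rather than local count: the real points of $C_k\cap X_{t_k}$ are intersections with $\{FQ+tR=0\}$. I would parametrize using the positive function $-FQ/R$ and bound the number of real zeros of $FQ+tR$ along $C_k(\R)$ by a Rolle-type argument, namely that between consecutive real zeros on $C$ the function $FQ/R$ must attain $-t$ twice. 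Combining this with a Bézout count of $C\cap X$, and with $C\cap\{Q=0\}$ having no real points, may show that the pair of branches together only sees intersections of $C$ with $X$ itself. If the local/global count fails in general, I would use the freedom in choosing $Q$ and $R$ (as in Orevkov) to force the desired inequality.

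\textbf{Deduce the statement about stubborn forms.} A form of degree $r$ on $X_t$ that is real-rooted would cut out $r(d+n)>dr$ real points, since $n>0$. This is impossible, so no form of degree $r$ is real-rooted. Plane curves are $d$-normal, and $X_t$ is smooth, so by Theorem~\ref{thm:stubborn_implies_many_zeroes} (real-rootedness is necessary for stubbornness) $X_t$ has no stubborn forms of degree $r$.
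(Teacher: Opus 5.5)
There is a genuine gap. The step you identify as the main obstacle is never proved: that a real point $p\in X\cap C$ of multiplicity $m_p$ absorbs at most $m_p$ real points of $X_{t_k}\cap C_k$. You list possible strategies (restricting $G_k$ to branches, a Rolle-type count, re-choosing $Q$ and $R$ ``as in Orevkov'') but carry none of them out. The last option is not even available, since the statement is for arbitrary admissible $Q$ and $R$. So the bound $dr$ is not established.

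Part of the trouble comes from an incorrect picture of $X_t(\R)$. Since $X$ is smooth and $Q,R>0$ on $\P^2(\R)$, near $p\in X(\R)$ the curve $X_t$ is the single smooth branch $F=-tR/Q$, i.e.\ $v=-t\phi(u,v)$ in your coordinates. This is a parallel copy of the oval $X(\R)$ at distance $O(t)$ on the side $\{F<0\}$. The two branches $v=\pm\sqrt{t\phi}$ belong to a curve like $F^2Q-tR=0$, not to $FQ+tR=0$. Your own observation $X_t(\R)\subset\{F\le 0\}$ already rules out a band boundary lying on both sides of $X(\R)$, and $X(\R)$ cannot be a pseudo-line since $d$ is even. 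So the ``naive count $2m_p$'' never arises.

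The missing ingredient is conservation of intersection number, which the paper cites from Griffiths--Harris, p.~664. If $a$ is an isolated point of $X_0\cap C$ and $U$ is a small neighbourhood of $a$, then for large $k$ the points of $X_{t_k}\cap C_k$ in $U$, counted with multiplicity, number exactly $(X_0\cdot C)_a$. The paper applies this not at real points but at the points of $\{Q=0\}\cap C$. By B\'ezout these are $nr$ points counted with multiplicity, all nonreal because $Q$ is positive definite. Choosing the neighbourhoods inside $\P^2(\C)\setminus\P^2(\R)$ then yields at least $nr$ nonreal points of $X_{t_k}\cap C_k$. Hence there are at most $(d+n)r-nr=dr$ real ones, a contradiction.

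The same principle could instead be applied at the real points of $X\cap C$, where $(X_0\cdot C)_p=m_p$ because $Q$ has no real zeros. Combined with your accumulation remark, this gives your local bound immediately, with no Rolle argument. Your final deduction that $X_t$ has no stubborn forms of degree $r$ via Theorem~\ref{thm:stubborn_implies_many_zeroes} is correct and matches the paper.
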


\begin{proof}
Assume that there is a sequence of $(t_k)_{k\in\N}$ converging to $0$ and a sequence of real curves $C_{t_k}$ of degree $r$ intersecting $X_{t_k}$ in at least $dr+1$ real points counted with multiplicities.
By passing to a subsequence, if necessary, we can assume that $C_{t_k}$ converges to some real curve $C$ of degree $r$.
The curve $X_0$ is reducible with one irreducible component being $X$, while the other component defined by $Q$ has an empty real locus.
In particular, $X_0$ intersects the curve $C$ in at least $nr=\deg Q \cdot \deg C$ nonreal points (counted with multiplicities).
It follows from \cite[p. 664]{GH1994} that the (total) intersection number of $X_0$ and $C$ in a small analytic neighborhood $U\subset \P^2\setminus \P^2(\R)$ of $a\in X_0\cap C$ is invariant under continuous deformations of $X_0$ and $C$.
Hence, for all large $k$ there must be at least $nr$ nonreal points in the intersection of $X_{t_k}$ and $C_{t_k}$, which contradicts our assumption that we have at least $dr+1$ real intersection points.
The second claim follows from \Cref{thm:stubborn_implies_many_zeroes}.
\end{proof}

Taking $r=2$ and any even $d\geq 4$ and $n\geq 2$, we obtain the following corollary.

\begin{cor}
    For all sufficiently small $t>0$ the smooth real curve $X_t$ of degree $d+n$ intersects any conic $C\subset \P^2$ in at most $2d$ real points (counted with multiplicities).
   In particular, $X_t$ does not admit stubborn quadrics.

\end{cor}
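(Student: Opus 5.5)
The corollary is the case $r=2$ of \Cref{thm:deformation}, so the plan is to check its hypotheses and then spell out the consequence for quadrics. Take an even $d\geq 4$, an even $n\geq 2$, and $r=2$. Then the constraints $r<d$ and $r\leq n$ hold, and \Cref{thm:deformation} applies verbatim. It gives, for all sufficiently small $t>0$, that $X_t$ meets every curve $C$ of degree $2$ in at most $2d$ real points counted with multiplicity. This is the first claim.

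For the second claim, suppose a stubborn quadric $F\in\R[X_t]_2$ existed. Since $X_t$ is smooth (hence irreducible as a smooth plane curve) and totally real for small $t>0$, \Cref{thm:stubborn_implies_many_zeroes} forces every zero of $F$ on $X_t$ to be real. The form $F$ does not vanish identically on $X_t$, because $\deg X_t=d+n>2$ and a nonzero quadric cannot contain $X_t$. Hence $F$ cuts out a conic $C$, and by B\'ezout it meets $X_t$ in exactly $2(d+n)$ points counted with multiplicity. All of these would be real, which contradicts the bound of at most $2d<2(d+n)$ real intersection points. So no stubborn quadrics exist.

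The only delicate point is the count itself. We have to be sure that ``real-rooted'' in \Cref{thm:stubborn_implies_many_zeroes} means the full intersection divisor $\div(F)$ is real. We also have to check that the multiplicities in \Cref{thm:deformation} are intersection multiplicities, so that the two counts match. Both are immediate from the definitions, so I expect no real obstacle. The substantive work, the deformation argument that controls the nonreal intersections near the component $\{Q=0\}$, is already contained in \Cref{thm:deformation}.
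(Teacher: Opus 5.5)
Your proposal is correct and follows the paper's route. The paper obtains the corollary by specializing \Cref{thm:deformation} to $r=2$ with $d\geq 4$ and $n\geq 2$ even, and derives the absence of stubborn quadrics from \Cref{thm:stubborn_implies_many_zeroes}; your B\'ezout count of $2(d+n)$ real intersection points against the bound $2d$ just makes explicit the step the paper leaves implicit.
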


\begin{rem}
    Denote by $|rH_t|$ the linear system cut out by curves of degree $r$ in $\P^2$ on $X_t \subset \P^2$. In the previous theorem, we proved the following: for all $t$ small enough, if $D \in |rH_t|$ then $\#(\mathrm{supp}(D) \cap X_t(\R))\le dr$.
    {The \emph{totally real divisor threshold} of $X_t$, denoted $\mathrm{N}(X_t)$, is the smallest integer such that every complete linear system of degree at least $\mathrm{N}(X_t)$ contains a totally real divisor. The totally real divisor threshold cannot be bounded solely in terms of the topology of the real curves, but rather depends on metric properties of the curve. Estimating it is nontrivial. \Cref{thm:deformation} then shows that $\mathrm{N}(X_t) > (d+n)r$ for all $t$ small enough.}
    We refer the reader to \cite{baldi2025totallyrealdivisorscurves} for more details on the totally real divisor threshold.
\end{rem}

\subsection{No real-rooted polynomials via Descartes' rule of signs.}
Another possible way of ensuring nonexistence of real-rooted conics is via Descartes' rule of signs, see \cite{Descartes} or \cite[pp. 96-99]{Struik}.
Let $X$ be the curve in $\mathbb{P}^2$ with parametrization given, on an affine chart, by $t\in \mathbb{A}^1\mapsto (1+t^2+t^4, t^{9}(1+t^2))\in \mathbb{A}^2$.
In coordinates $(x,y)$ in the affine plane, this rational curve of degree $11$ is defined by
\begin{equation}
\begin{aligned}
f\ =\ & x^{11} - 11 x^{10} + 55 x^9 - 165 x^8 + 330 x^7 - 7 x^5 y^2 - 462 x^6 + 21 x^4 y^2 + 462 x^5 \\ &- 21 x^3 y^2 - 330 x^4 + 6 x^2 y^2 - y^4 + 165 x^3 + 2 x y^2 - 55 x^2 - y^2 + 11 x - 1.
\end{aligned}
\end{equation}

The following result shows that no real conic can intersect the curve $X$ only in real points.

\begin{prop}
Let $C\subset \P^2$ be a real conic. Then $C$ intersects $X$ in less than $22=2\deg X$ real points counted with multiplicities. In particular, $\#(X(\R)\cap C(\R))<2\deg X=22$.
\end{prop}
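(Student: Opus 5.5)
The plan is to pull the conic back along the parametrization and bound the real roots of the resulting univariate polynomial using Descartes' rule of signs, applied at $t$ and at $-t$. Write the conic affinely as $Q=\alpha x^2+\beta xy+\gamma y^2+\delta x+\varepsilon y+\varphi$ and substitute $x=1+t^2+t^4$, $y=t^9(1+t^2)$. Since $(1+t^2+t^4)^2=1+2t^2+3t^4+2t^6+t^8$ and $(1+t^2)(1+t^2+t^4)=1+2t^2+2t^4+t^6$, we get
\[
g(t)=\bigl(\varphi+\delta+\alpha\bigr)+(2\alpha+\delta)t^2+(3\alpha+\delta)t^4+2\alpha t^6+\alpha t^8+(\beta+\varepsilon)t^9+(2\beta+\varepsilon)t^{11}+2\beta t^{13}+\beta t^{15}+\gamma t^{18}+2\gamma t^{20}+\gamma t^{22}.
\]
Homogenizing in $(s:t)$ gives a binary form $G$ of degree $22=2\deg X$. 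Because the parametrization is birational onto $X$, the intersection divisor $C\cdot X$ is the image of the zero divisor of $G$; this includes the root $s=0$ when $\gamma=0$, which accounts for intersection at the point of $X$ at infinity. So it suffices to show that $G$ does not have $22$ real roots counted with multiplicity. I would also check in this step that the singular points of $X$ are not solitary. A real intersection point of $X$ whose preimages are non-real parameters could then only arise at real images of conjugate non-real parameters; the aim is to rule these out, or at least show they cannot make up the deficit.

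For the counting, the basic tool is the refinement of Descartes' rule for the combined count. If $g$ has nonzero coefficients at exponents $e_1<\dots<e_m$, then the number of nonzero real roots is at most $V(g(t))+V(g(-t))$. Each consecutive pair $(e_j,e_{j+1})$ contributes to this sum as follows:
\begin{itemize}
\item If the gap $e_{j+1}-e_j$ is odd, the pair contributes exactly $1$.
\item If the gap is even, the pair contributes $2$ when the two coefficients have opposite signs and $0$ otherwise.
\end{itemize}
In the generic support $\{0,2,4,6,8,9,11,13,15,18,20,22\}$ the odd gaps are $8\to 9$ and $15\to 18$, and the other nine gaps are even. The pairs $(2\alpha,\alpha)$ at $t^6,t^8$, $(2\beta,\beta)$ at $t^{13},t^{15}$, $(\gamma,2\gamma)$ at $t^{18},t^{20}$, and $(2\gamma,\gamma)$ at $t^{20},t^{22}$ never change sign. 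Hence the bound is at most $2+2\cdot 5=12$ nonzero real roots in the generic case, far below $22$.

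The remaining work is handling degenerate coefficient patterns, and I expect this to be the main (though mostly bookkeeping) obstacle. When some coefficients vanish, the support shrinks, gaps merge, and the parity pattern changes. A root at $t=0$ of multiplicity $\mu$ shifts the support, and a root at $s=0$ of multiplicity $22-\deg g$ appears. I would treat $t=0$ by dividing by $t^\mu$, and $s=0$ by applying the same argument to the reversed polynomial $t^{22}g(1/t)$. In every case I would check that the number of real roots, including those at $0$ and $\infty$, is strictly less than $22$. The key structural fact is that each block $\{\alpha\text{-terms}\}$, $\{\beta\text{-terms}\}$, $\{\gamma\text{-terms}\}$ retains at least one sign-preserving adjacent pair unless that block degenerates to a single monomial or vanishes entirely. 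In that case the total degree or the number of terms drops enough that the crude bound ($2(m-1)$ nonzero real roots plus the multiplicities at $0$ and $\infty$) is already below $22$.

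A short case split would cover this: first on whether $\gamma=0$, then on whether $\beta=0$, and then on the vanishing of the low-degree combinations $\varphi+\delta+\alpha$, $2\alpha+\delta$, $3\alpha+\delta$. In each case the count comes out at most $21$. The final assertion $\#(X(\R)\cap C(\R))<22$ then follows immediately, since distinct real points are counted at least once each.
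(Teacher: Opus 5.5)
Your root count is the paper's own argument: pull back along the parametrization, apply Descartes, and split on vanishing coefficients. Your refinement via $V(g(t))+V(g(-t))$ is correct and sharper than the paper's bound of $18$.

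The gap is the bridge from parameters to points of $X(\R)$, and it cannot be closed. Write $\phi(t)=(1+t^2+t^4,\,t^9(1+t^2))$. Then $\phi(0)=\phi(\pm i)=(1,0)$, and $X$ does have solitary real points. If $t^2+\bar t^{\,2}=-1$ and $t^7\in\R$ (e.g.\ $t=re^{2\pi i/7}$ with $r^2=-1/(2\cos\frac{4\pi}{7})$), then $\phi(t)=\phi(\bar t)$ is real with $x<1$, whereas $x\ge 1$ on the real branch. Such points do make up the deficit. The line $y=0$ meets $X$ only at $(1,0)$, with multiplicity $9+1+1$, and $z=0$ meets $X$ only at $[0:1:0]$, with multiplicity $11$. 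So the real conic $yz=0$ meets $X$ in $22$ real points of $\P^2$ counted with multiplicity, although $G=s^{11}t^9(s^2+t^2)$ has the non-real roots $t=\pm is$.

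Hence the statement is false if the count is over points of $X(\R)$. It is true only as a statement about the pulled-back divisor on the parameter line, namely that $G$ has a non-real root, and that is all the paper's proof establishes. Drop the bridging step and prove exactly that. The ``in particular'' then needs one extra line: $22$ distinct intersection points would all be transversal intersections at smooth points of $X$, each with a unique and hence real parameter.

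Second, your claim that a degenerate block pushes the crude bound below $22$ fails when all blocks degenerate. Taking $\alpha=\beta=\gamma=\delta=\varepsilon=0$ gives $G=\varphi s^{22}$, the double line $z^2$, which meets $X$ only at $[0:1:0]$ with multiplicity $22$. This non-reduced conic has to be excluded. The paper's proof has the same blind spot, since its assertion that no line meets $X$ in $11$ real points fails for $z=0$.

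Apart from this, your whole case split collapses to one observation. If $g$ has support $e_1<\dots<e_m$, then $G$ has at most $e_1+(22-e_m)+\sum_j c_j$ real roots. Here the contribution $c_j$ of the $j$-th gap satisfies $c_j\le e_{j+1}-e_j$, with equality only for a gap $1$ or a sign-changing gap $2$. At least one sign-preserving adjacent pair is present unless $g$ is constant:
$(t^6,t^8)$ when $\alpha\ne0$; $(t^2,t^4)$ when $\alpha=0\ne\delta$; $(t^9,t^{11})$ when $\beta=0\ne\varepsilon$; $(t^{13},t^{15})$ when $\beta\ne0$; $(t^{18},t^{20})$ when $\gamma\ne0$. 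So in every other case $G$ has at most $21$ real roots.
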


\begin{proof}
Pulling back a (real) quadric $a_{11}x^2+a_{12}xy+ a_{22} y^2 + a_1 x + a_2 y+ a_0$ via the parametrization of $X$ gives a univariate (real) polynomial $q$ of degree (at most) $22$ whose monomial support is limited to the degrees $0, 2, 4, 6, 8$, $9, 11, 13, 15$ and $18, 20, 22$.
The monomials of degrees $0, 2, 4, 6, 8$ come from $a_{11}x^2, a_1 x$, and the constant term $a_0$; the monomials of degrees $9, 11, 13, 15$ come from $a_{12} xy$ and $a_2 y$;
and the monomials of degrees $18, 20, 22$ come only from $a_{22}y^2$.

Let us first consider the case of a reducible (over $\R$) conic and show that it cannot have only real intersection points with $X$.
For this it is enough to observe that no line $a_1x + a_2 y + a_0$ intersects $X$ in $11$ real points counted with multiplicities.
This directly follows from Descartes rule of signs, see e.g \cite[Th.~1.3.12]{Scheiderer2024}, as $(a_0+a_1)+ a_1t^2+a_1t^4+a_2 t^9+a_2 t^{11}$ has too few sign variations in the sequence of its coefficients.

Therefore, we can assume that $a_{11}x^2+a_{12}xy+ a_{22} y^2 + a_1 x + a_2 y+ a_0$ is irreducible.

If $a_{22}\neq 0$, the polynomial $q$ is of degree $22$ but it has at most $12$ nonzero terms.
Moreover, since terms of degrees $18, 20$ and $22$ have coefficients of the same sign, there are at most $9$ sign variations in the sequence of coefficients of $q$.
In general, one can factor $q=t^k \tilde q$, where $k\leq 18$ and $\tilde q(0)\neq 0$.
The polynomial $\tilde q$ is of degree $22-k$ and, depending on the value of $k=0,2,4,6,8, 9, 11, 13, 15, 18$, the sequence of  coefficients of $\tilde q$ has at most $9, 8, 7, 6, 5, 4, 3, 2, 1,$ or $0$, respectively, sign variations.
In any of these cases Descartes rule of signs implies that $q=t^k\tilde q$ has at most $18<22=\deg q$ real zeros counted with multiplicities.

If $a_{22}=0$ but $a_{12}\neq 0$, the degree of $q$ is $15$ but it has at most $9$ nonzero terms.
Moreover, since terms of degrees $13$ and $15$ have the same coefficient $a_{12}$, there are at most $7$ sign variations in the sequence of coefficients of $q$.
Again, factoring $q=t^k \tilde q$ with $k\leq 13$ and $\tilde q(0)\neq 0$ we see that
the polynomial $\tilde q$ has degree $15-k$ and, depending on the value of $k=0,2,4,6,8,9,11,13$, the sequence of coefficients of $\tilde q$ has at most $7, 6, 5, 4, 3, 2, 1$ or, respectively, $0$ sign variations.
In all of these cases Descartes rule of signs guarantees that $q=t^k\tilde q$ has no more than $14<15=\deg q$ real zeros counted with multiplicities.

Let now both $a_{22}$ and $a_{12}$ be zero.
Then $a_{11}\neq 0$, as otherwise we are in the reducible case.

If $a_2\neq 0$ (given that $a_{22}=a_{12}=0$), the polynomial $q$ is of degree $11$ and it has at most $7$ nonzero terms.
Also, terms of degrees $9$ and $11$ have the same coefficient $a_2$ and so there are at most $5$ sign variations in the sequence of coefficients of $q$.
By the same argument as above, given the degree $11$ polynomial $q=t^k \tilde q$, $k\leq 8$, $\tilde q(0)\neq 0$, there are at most $5, 4, 3, 2$ or $1$ sign variations in the sequence of the coefficients of $\tilde q$ depending on the respective value of $k=0, 2, 4, 6, 8$.
And by Descartes rule of signs, $q=t^k\tilde q$ cannot have more than $10<11=\deg q$ real zeros counted with multiplicities.

Finally, if $a_{22}=a_{12}=0$ and also $a_2=0$, the polynomial $q$ has degree $8$ and it has at most $5$ nonzero terms.
Moreover, as terms of degrees $6$ and $8$ have coefficients of the same sign, there are no more than $3$ sign variations in the sequence of coefficients of $q$.
We can again write $q=t^k \tilde q$ with $k\leq 4$ and $\tilde q(0)\neq 0$.
There are at most $3, 2$ or $1$ sign variations in the sequence of coefficients of $q$, if $k=0, 2$ or, respectively, $4$.
By Descartes rule of signs, $q=t^k\tilde q$ has no more than $6<8=\deg q$ real zeros counted with multiplicities.
This concludes the proof.
\end{proof}

\begin{rem}
With the same proof as in Theorem \ref{thm:deformation} we can also perturb the rational curve $X$ constructed above to obtain a smooth curve $X'$ so that any real conic does not have only real intersection points with $X'$. 
\end{rem}

\section{Nonnegative lifting}\label{sec:lifting}

We now discuss how to extend nonnegative quadrics on a curve to nonnegative quadrics on a larger variety containing the curve. In view of the result from \cite{MR3486176}, this is in general not possible (for example, if the variety is not of minimal degree).
However, under some assumptions on the curve and the ambient variety, we provide a method for such a \emph{nonnegative lifting}.

One of our goals is to build stubborn polynomials on larger dimensional varieties, whose stubbornness is inherited from stubbornness on a curve. This is based on the following simple observation. Let $X \subset Y$ be a subvariety and denote by $\iota^* \colon \R[Y]\to \R[X]$ the associated pullback map between homogeneous coordinate rings. 
\begin{prop}
    \label{prop:inherit_stubbornness}
    If $X\subset Y$ is a closed totally real variety in a totally real variety $Y$ and $f\in \R[X]$ is stubborn, then any nonnegative $g\in \R[Y]$ with $\iota^*(g) = f$ is stubborn on $Y$.
\end{prop}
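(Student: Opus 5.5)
The plan is to argue by contrapositive, using only that $\iota^*$ is a ring homomorphism preserving degrees. Suppose $g\in\R[Y]$ is nonnegative on $Y(\R)$ with $\iota^*(g)=f$, but $g$ is not stubborn. Since $g$ is nonnegative of even degree, this means there is an odd $k$ and forms $G_1,\dots,G_m\in\R[Y]$ with $g^k=\sum_i G_i^2$.

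Next apply $\iota^*$. The inclusion $X\subset Y$ of embedded projective varieties (same ambient $\P^n$) gives $I(Y)\subseteq I(X)$, so $\iota^*\colon \R[Y]\to\R[X]$ is the natural surjective graded ring homomorphism. Hence
\[
f^k=\iota^*(g)^k=\iota^*(g^k)=\sum_i \iota^*(G_i)^2 ,
\]
with $\iota^*(G_i)\in\R[X]$ homogeneous of degree $\tfrac12\deg(f^k)$. So $f^k$ is a sum of squares in $\R[X]$ for the odd integer $k$. This contradicts stubbornness of $f$.

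It remains to check that $g$ meets the first clause of \Cref{def:stubborn}. Nonnegativity of $g$ is assumed, and $\deg g=\deg f$ is even. Therefore $g$ is stubborn.

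There is no real obstacle here. The only point to make explicit is that restriction of forms is a ring map, so it carries sums of squares to sums of squares. Total reality of $X$ and $Y$ is needed only so that stubbornness is defined in the sense of \Cref{def:stubborn} on each of them.
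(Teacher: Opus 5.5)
Your argument is correct and is exactly the reasoning behind this statement. The paper records it as a ``simple observation'' without writing out a proof: $\iota^*$ is a degree-preserving ring homomorphism, so a representation $g^k=\sum_i G_i^2$ on $Y$ with $k$ odd restricts to $f^k=\sum_i \iota^*(G_i)^2$ on $X$, contradicting stubbornness of $f$.
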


Following e.g.~\cite{Miranda1995}, we say that an effective divisor $E=\sum m_i p_i$ on a curve $C$ \emph{imposes independent conditions} on forms of degree $k$ on $C$ if the codimension of the vector subspace of $\R[C]_k$ consisting of forms $F$ such that $\div(F)-E$ is effective, is equal to the degree of $E$. 
\begin{rem}\label{rem:subdivisor}
Observe that if $E=\sum m_i p_i$, $m_i\geq 0$, imposes independent conditions on forms of degree $k$ on $C$, then so does any divisor $E'=\sum m'_i p_i$ with $0\leq m'_i\leq m_i$.
\end{rem}

\begin{theorem}\label{thm:general_lifting}
Let $X\subseteq \P^n$ be a totally real variety with $I(X)$ defined by quadrics. Let $L\subset \P^n$ be a linear space, such that $C=L\cap X$ is a smooth curve and for all $p\in C$ we have $$T_p C=T_p X\cap L.$$
Let $F\in \R[C]_2$ be a nonnegative quadratic form on $C$ and let $D=2E=\div(F)_\R$ be the divisor of real zeroes of $F$ on $C$. Suppose furthermore that points in $E$ impose linearly independent conditions on linear forms on $C$. Then there exists $G\in I(L)_2$ such that $F+G$ is nonnegative on $X$.
\end{theorem}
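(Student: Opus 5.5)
The plan is to build $F+G$ as an arbitrary lift of $F$, corrected by terms in $I(L)_2$ of two kinds. The first kind are products $\ell_j M_j$, where $\ell_0,\dots,\ell_s$ are linear forms cutting out $L$ and the $M_j$ are linear forms; these correct the local behaviour of the lift at the real zeros of $F$ on $C$. The second kind is a large multiple $\lambda S$ of $S=\sum_j \ell_j^2$. This form is nonnegative on $\P^n(\R)$ and vanishes exactly on $L$, so it dominates wherever we are away from $C$.

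\emph{Setup.} Since $C=L\cap X$, every form on $C$ is the restriction of a form on $X$ up to $I(L)$. I take an arbitrary lift $\tilde F\in\R[X]_2$ of $F$; if quadrics of $L$ do not all restrict from $X$ I first pass to a suitable presentation. I then look for
\[
G=\sum_j \ell_j M_j+\lambda S ,
\]
with the $M_j$ linear and $\lambda\gg0$. Away from a neighbourhood $U$ of the finitely many real zeros $p_1,\dots,p_r$ of $F$ on $C(\R)$, the argument is a compactness one. On $C(\R)\setminus U$ we have $\tilde F>0$, hence $\tilde F>0$ on a neighbourhood $V$ of it in $X(\R)$. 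On $X(\R)\setminus V$ we have $S\ge c>0$ (after normalising coordinates), so a large $\lambda$ makes the total positive there. The whole difficulty is therefore local, near each $p_i$.

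\emph{Local analysis at a real zero $p=p_i$ of multiplicity $2m$ (so $E$ has coefficient $m$ at $p$).} Choose local real analytic coordinates on $X$ at $p$: a parameter $t$ along $C$, and transverse coordinates $u$ obtained by restricting the $\ell_j$. The hypothesis $T_pC=T_pX\cap L$ says the $\ell_j$ give coordinates on $T_pX/T_pC$, so this is a valid choice. The hypothesis that $I(X)$ is generated by quadrics is what I expect to use to know that the relevant local sections really come from $\R[X]_2$ and that these coordinates are compatible with the embedding. Expanding, I get
\[
\tilde F=a(t)+\sum_j u_j\,b_j(t)+O(|u|^2), \qquad a(t)=F|_C\sim c\,t^{2m},\ c>0 .
\]
Adding $\sum_j u_j M_j|_C(t)$ changes $b_j$ to $b_j+M_j|_C$. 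The goal is to choose the $M_j$ so that each $b_j+M_j|_C$ vanishes to order at least $m$ at $t=0$, i.e.\ to match $m$ jet conditions at $p$. This is exactly where I use that $E$ imposes independent conditions on linear forms on $C$. For each $j$ the prescriptions (the $(m_i-1)$-jet of $M_j|_C$ at every $p_i$) form $\deg E$ linear conditions, so they can be met simultaneously for all $i$. By \Cref{rem:subdivisor} it is harmless if some conditions are vacuous.

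Once this is done, the local expansion becomes
\[
c\,t^{2m}(1+o(1))+\sum_j u_j\,O(t^m)+\lambda|u|^2+O(|u|^2).
\]
The cross terms are absorbed by AM-GM: $|u_j t^m|\le \varepsilon t^{2m}+\varepsilon^{-1}u_j^2$. So for $\lambda$ large the expansion is nonnegative on a fixed neighbourhood of $p$. Combining these neighbourhoods with the compactness step, and enlarging $\lambda$ once more, gives $F+G\ge0$ on $X(\R)$.

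The main obstacle is making this local estimate uniform. I need to check that the $O(|u|^2)$ and higher-order mixed terms, e.g.\ $u_j\,O(t^{m+1})$ and $u^2t$, are controlled on a neighbourhood of $p$ whose size does not shrink as $\lambda$ grows. The estimate $\lambda|u|^2-C|u|^2-C'|u||t|^m+c\,t^{2m}/2\ge0$ has to hold on a fixed ball, and the order of quantifiers (fix the $M_j$, then $U$, then $\lambda$) must be handled carefully. If this is delicate, I would first treat the case of simple double zeros ($m=1$), where only values $M_j(p_i)$ need to be prescribed. There the Hessian of $F+G$ at $p$ on $T_pX$ is positive semidefinite with kernel $T_pC$, and a Morse-type argument finishes the case. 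I would then handle higher $m$ by the jet-matching above.
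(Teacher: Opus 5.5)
Your proposal is correct and follows the paper's proof. You lift $F$, cancel the terms $\ell_i,\ell_i t,\dots,\ell_i t^{m-1}$ at each real zero by adding $\sum_j \ell_j M_j\in I(L)_2$ using the independence of the conditions imposed by $E$, then add a large multiple of $\sum_j \ell_j^2$ and finish by compactness on $X(\R)\setminus(U\cup V)$, with $V$ chosen for the corrected lift. The only difference is the local positivity check: your AM--GM bound, whose constants do not depend on $\lambda$ and which therefore already settles the uniformity worry you raise, replaces the paper's appeal to Vasiliev's principal-part result together with a Schur complement argument.
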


\begin{rem}
The containment $T_p C\subseteq T_p X\cap L$ always holds. We require above that there is no additional intersection between $T_p X$ and $L$. Notice that the condition $T_p C=T_p X\cap L$ does not imply that $X$ and $L$ intersect transversely at $C$, since $T_p X$ and $L$ are not required to span $T_p {\P^n}$.
\end{rem}

\begin{proof}
First, we do the local lifting; that is, we extend $F$ to a form on $X$ that is nonnegative in some analytic neighborhood in $X(\R)$ of zeroes of $F$ on $C(\R)$. Let $p$ be a real zero of $F$ on $C$. The assumption on the tangent spaces is equivalent to $T_p X^\perp+L^\perp=T_p C^\perp$. 
Then we can pick linear forms $t, \ell_1,\dots, \ell_{N-1}$ (where $N=\dim X$) that give rise to a basis of the cotangent space of $X$ at $p$ and such that $\ell_1,\dots,\ell_{N-1}\in I(L)_1$.
Then \cite[Prop. 3.3.7]{BochnakCosteRoy1998} implies that $t,\ell_1,\dots,\ell_{N-1}$ form a regular system of (local) parameters of $X$ at $p$ with $t$ being a local parameter of $C$ at $p$.

Expanding $F$ locally on $C$ at $p$, we write $F=c_{2m}t^{2m}+ O(t^{2m+1})$. 
Since $F$ is nonnegative on $C$, the multiplicity $2m$ of the intersection of $F$ with $C$ must be even and $c_{2m}>0$. Now let $G=\ell_1^2+\dots+\ell_{N-1}^2$ and consider the local expansion of $F+\lambda G$ on $X$ at $p$ for a sufficiently large $\lambda>0$. 

\textbf{Claim.} The form $F+\lambda G$ is locally around $p$ nonnegative on $X(\R)$ for sufficiently large $\lambda>0$ if and only if the local expansion of $F$ does not contain the terms $\ell_i, \ell_i t,\dots,\ell_i t^{m-1}$ for all $i=1,\dots, N-1$. 

\emph{Proof of Claim.} If any of these terms are present, then a leading term in $F+\lambda G$ is not even, and thus $F+\lambda G$ is not locally nonnegative. For the other direction, the principal part of $F+\lambda G$ is a quadratic form in $\ell_1,\dots,\ell_{N-1}$ and $t^{m}$ involving $c_{2m} t^{2m} + \lambda(\ell_1^2+\dots+\ell^2_{N-1})$. Note that even if this quadratic form contains terms $\ell_i t^m$, for sufficiently large $\lambda>0$ it takes strictly positive values away from $t=\ell_1=\dots=\ell_{N-1}=0$. This follows, for example, by Schur complement criterion for positive definiteness of a real symmetric matrix \cite[Thm. 7.7.7]{HornJohnson}. 
Finally, by \cite[Prop. 1.3]{Vasiliev}, in a small analytic neighborhood of $t=\ell_1=\dots=\ell_{N-1}=0$ the sign of $F+\lambda G$ is determined by the sign of its principal part, which (as shown) is nonnegative with $p$ being a strict local minimum.

Therefore, our goal is to cancel terms $\ell_i, \ell_i t,\dots,\ell_i t^{m-1}$ in a local (on $X$ at $p$) expansion of $F$ by adding forms $s_i\ell_i\in I(L)_2$.
Writing $F$ locally at $p$, we obtain
\begin{align}\label{eq:expansion}
F = P(t)+\ell_1Q_1(t)+\dots+\ell_{N-1}Q_{N-1}(t)+ \text{ mixed terms}
\end{align}
We want to find linear forms $s_1,\dots, s_{N-1}\in \R[X]_1$ having the same expansion as $-Q_1(t),\dots, -Q_{N-1}$ up to degree $m-1$ respectively. Then, by the above claim, $F+\sum s_i\ell_i+\lambda G$ is locally nonnegative for sufficiently large $\lambda$.
We explain how to find such a linear form $s\in \R[X]_1$ for a given $R(t):=-Q(t)$.
The condition that terms $1, t,\dots, t^{m-1}$ do not appear in $s+Q$ just means that it intersects $C$ with multiplicity at least $m$. 
Finding $s$ thus means solving a system of \emph{affine} linear equations. 
Indeed, let us write $s=\sum_{j=0}^n a_j x_j$.
Moreover, the coordinates restricted to the curve $C$ admit expansions $x_j|_C=\sum x_{jk} t^k$, $j=0,1,\dots, n$.
The condition that $s|_C$ agrees with $R(t)=-\sum_{k\geq 0} q_k t^k$ up to order $k\leq m-1$ at $p\in C$ is then written explicitly as $\sum_{j=0}^n x_{jk} a_{j} = -q_k$, $k=0,1,\dots,m-1$.
This is an affine system of size $m\times (n+1)$.
The corresponding matrix $(x_{jk})_{j,k=0}^{n,m-1}$ is of rank $m$, since otherwise the divisor $E'=m p$ (and hence also the whole divisor $E$) does not impose independent conditions on linear forms on $C$ (cf. Remark \ref{rem:subdivisor}).
This ensures that we can find $a_0, a_1, \dots, a_n\in \R$ solving the above affine system, or equivalently, that $s|_C=\sum_{j=0}^n a_j x_j|_C$ agrees with $R(t)=-Q(t)$ up to order $m-1$.  
Let now $p_1,\dots, p_r$ be points in the support of $E$ whose multiplicities are $m_1,\dots, m_r$, that is, $E=\sum m_i p_i$. Since $E$ imposes independent conditions on linear forms, the matrix (as above) encoding that $s|_C$ agrees with any given $R_i(t)$ up to order $m_i-1$, $i=1,\dots, r$, is of rank $m_1+\dots+m_r$. This ensures that we can find $s\in \R[X]_1$ satisfying the desired conditions at all points in the support of $E$. 
Summarizing, by adding to $F$ the quadratic form $s_1\ell_1+\dots+s_{N-1} \ell_{N-1}+\lambda(\ell_1^2+\dots+\ell_{N-1}^2)$, we can make it locally nonnegative at all $p_1,\dots, p_r$. 

Now we can apply the lifting argument by adding a sufficiently large positive multiple of $G$ to lift $F$ to a quadratic form nonnegative on $X$. 
Indeed, to see this explicitly, let us pass to the double cover $\mathbb{S}^n$ of $\mathbb{P}^n({\mathbb{R}})$ and denote  the preimages of $X$ and $p_1,\dots, p_r$ by $\tilde X\subset \mathbb{S}^n$ and $\pm \tilde p_1,\dots,\pm \tilde p_r\in \tilde X$, respectively. 
Outside small neighborhoods of $\tilde p_1,\dots, \tilde p_r$ in $\tilde X$ the quadratic polynomial $F$ attains its minimum $F_{\min}$. If $F_{\min}\geq 0$, we have nothing to add to make $F$ nonnegative on $X$.
Otherwise, assume we have already made $F$ nonnegative in small neighborhoods of $p_1,\dots, p_r$. 

Then $F_{\min}<0$ is attained outside a tubular neighborhood $U$ of $\tilde C$ in $\tilde X$.
But over $\tilde X\setminus U$ the form $G$ is bounded from below by some $G_{\min}>0$.
Therefore, for possibly even larger $\lambda'>0$ the form $F+\lambda' G$ is nonnegative on $\tilde X$ and hence on $X$.
\end{proof}

\begin{rem}
    Let $X$ be a real variety, $C\subset X$ be a curve, and suppose that the ideal of $C$ inside the ideal of $X$ is defined by quadrics. Then $\nu_2(C)=\nu_2(X)\cap L$ where $L$ is a linear space and for all $p\in \nu_2(C)$ we have $$T_p {\nu_2(C)}=T_p {\nu_2(X)}\cap L.$$
\end{rem}

\begin{cor}\label{cor:ext}
Let $X\subseteq \P^n$ be a totally real variety with $I(X)$ defined by quadrics. Let $L\subset \P^n$ be a linear space, such that $C=L\cap X$ is an elliptic normal curve in $L$, and $T_p C=T_p X\cap L.$
Then for any quadric $F\in \R[C]_2$ nonnegative on $C$, there is a quadratic form $\hat{F}\in \R[X]_2$ nonegative on $X$ such that $\hat{F} \mid_C=F$, i.e.,~$F$ can be extended to a quadratic form nonnegative on $X$.
\end{cor}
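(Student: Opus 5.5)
The corollary should follow from \Cref{thm:general_lifting} once we check its hypothesis on independent conditions. The elliptic normal curve $C$ is smooth, and the tangent space condition is assumed. So the only remaining point is: if $F\in\R[C]_2$ is nonnegative on $C$ and $\div(F)_\R=2E$, then $E$ imposes independent conditions on linear forms on $C$. Given that, \Cref{thm:general_lifting} yields $G\in I(L)_2$ with $F+G$ nonnegative on $X$. Here $F$ is first lifted to any quadric on $\P^n$ via $2$-normality; this is possible because elliptic normal curves are projectively normal, so $\R[C]_2$ is the restriction of quadrics. Setting $\hat F=F+G$ then gives $\hat F|_C=F$, because $G$ vanishes on $L\supseteq C$.

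To verify independence, let $C\subset L\cong\P^{m-1}$ have degree $m$, so linear forms on $C$ are $H^0(C,\sO_C(H))$ with $\deg H=m$ and $h^0=m$. Since $C$ is linearly normal, the forms on $L$ restrict onto this space. For an effective divisor $E$ on a genus-one curve with $\deg E<m$, Riemann--Roch gives $h^0(H-E)=m-\deg E$, because $\deg(H-E)>0=2g-2$. So the codimension of the forms vanishing on $E$ is exactly $\deg E$, which is independence. Moreover, $\deg E\le \tfrac12\deg\div(F)=\tfrac12\cdot 2m=m$, so the only bad case is $\deg E=m$.

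That case is the main point. It occurs exactly when $F$ is real-rooted with $\div(F)=2E$. Then independence fails only if $h^0(H-E)=1$, i.e.\ $E\sim H$. In that case $E=\div(s)$ for some real linear form $s$, and $\div(F)=\div(s^2)$. Hence $F=cs^2$ with $c>0$ by nonnegativity, using linear normality in degree $2$. For such $F$ we lift directly: $\hat F=c\,\tilde s^2$, where $\tilde s$ is any linear form on $\P^n$ restricting to $s$, is nonnegative on $X$. Otherwise $E\not\sim H$, so $h^0(H-E)=0$ and the codimension is $m=\deg E$; the hypothesis again holds. Subdivisors are covered by \Cref{rem:subdivisor}.

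The main obstacle I expect is a clean treatment of the boundary case $\deg E=m$, and in particular the need for a real representative. If $E\sim H$, then $|H-E|$ contains the zero divisor, so some form $s$ vanishes exactly on $E$. This $s$ is real because $E$ is a real divisor and the linear system $|H|$ is defined over $\R$. One also needs care that $\R[C]_2$ is computed in the coordinates of $L$, which is fine since $C=L\cap X$ is set-theoretically and scheme-theoretically the curve. Smoothness of $C$ is part of being an elliptic normal curve.
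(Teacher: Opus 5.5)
Your proof is correct, but it verifies the independence hypothesis of \Cref{thm:general_lifting} by a different route than the paper.

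The paper argues as follows:
\begin{itemize}
\item it first restricts to $F$ spanning an extreme ray of $P_C$;
\item it uses \cite[Thm.~B]{baldiblekhermannsinn24} to get $\div(F)=\div(F)_\R$;
\item it argues that a dependency would produce a linear form $\ell$ vanishing on $\tfrac12\div(F)$, so that $F\pm\varepsilon\ell^2$ contradicts extremality;
\item it handles arbitrary $F$ by writing it as a sum of extremal forms and adding their lifts.
\end{itemize}

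You instead check the hypothesis directly for every nonnegative $F$ via Riemann--Roch on the genus-one curve. When $\deg E<m$, independence comes for free. When $\deg E=m$, dependence is equivalent to $E\sim H$, which forces $F=cs^2$, and you lift that trivially as $c\tilde s^2$.

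Your route is more elementary and self-contained: it needs no description of the extreme rays of $P_C$ and no decomposition step. It also makes explicit a case the paper's perturbation argument passes over. When $\deg\tfrac12\div(F)=m$, any linear form vanishing on $\tfrac12\div(F)$ has divisor exactly $\tfrac12\div(F)$. So $\ell^2$ is proportional to $F$ and extremality yields no contradiction; one has to note separately, as you do, that such squares lift. The paper's route, in exchange, is shorter given the cited structure result for $P_C$ and the observation that liftable forms are closed under sums.

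Two minor redundancies in your write-up:
\begin{itemize}
\item Every element of $\R[C]_2$ is by definition the restriction of a quadric on $\P^n$, so projective normality is not needed to regard $F+G$ as a form on $X$.
\item The appeal to \Cref{rem:subdivisor} is unnecessary, since you check independence for $E$ itself.
\end{itemize}
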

\begin{proof}
We need to verify that $\frac{1}{2}\div(F)_\R$ imposes independent conditions on linear forms on an elliptic normal curve $C$. Assume first $F\in \R[C]_2$ spans an extreme ray of the cone of nonnegative quadratic forms on $C$. Then by \cite[Thm. B]{baldiblekhermannsinn24} we have $\div(F)=\div(F)_\R$. If $\frac{1}{2}\div(F)$ imposed dependent conditions on linear forms $\R[C]_1$, there would exist a linear form $\ell\in \R[C]_1$ that vanishes on $\frac{1}{2}\div(F)$. Then $F-\varepsilon \ell^2$ and $F+\varepsilon \ell^2$ are nonnegative quadratic forms not proportional to $F$, where $\varepsilon>0$ is sufficiently small. This contradicts the extremality of $F=\frac{1}{2}(F-\varepsilon \ell^2)+\frac{1}{2}(F+\varepsilon \ell^2)$.
So, by Theorem \ref{thm:general_lifting}, $F$ can be extended to a quadratic form nonnegative on $X$.
If $F=\sum_{i=1}^r F_i$ is not extremal, it can be extended to a quadratic form nonnegative on $X$, since this holds for extremal quadratic forms $F_i\in \R[C]_2$ that generate $F$.
\end{proof}
We can formulate Corollary \ref{cor:ext} as a statement about projection of the cones of nonnegative polynomials on real varieties, as follows.
Let $X\subseteq \P^n$ be a totally real variety whose vanishing ideal $I(X)$ is generated by quadratic forms. Let $L\subset \P^n$ be a linear space, such that $C=L\cap X$ is an elliptic normal curve in $L$, and $T_p C=T_p X\cap L.$
Let $I(C)$ be the ideal of $C$ inside $\R[X]_2$. Then we have a natural linear projection $\pi: \R[X]_2\rightarrow \R[C]_2$, which mods out by $I(C)$. It is clear that $\pi(P_X)\subseteq P_C$ holds in full generality. Under the above assumptions we have the equality $\pi(P_X)= P_C$.
\begin{cor}
    \label{cor:ext_elliptic}
Let $C$ be an elliptic normal curve of degree $n$ in $\P^n$. Then any form of degree $2d$  ($d\geq 3$ for $n=2$, and $d\geq 2$, $n\geq 3$) nonnegative on $C$, can be extended to a globally nonnegative form $f$ on $\P^n$. In particular, there exist stubborn forms of degree $2d$ on $\P^n$ with at most $nd$ real zeroes, where each real zero is a simple node (i.e.,~the Hessian at every real zero of $f$ is positive semidefinite of rank $n$).
\end{cor}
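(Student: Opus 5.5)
We reduce the statement to \Cref{cor:ext} applied to a Veronese re-embedding. Let $X=\nu_d(\P^n)\subset \P^N$, where $N=\binom{n+d}{d}-1$. Its ideal is generated by quadrics (the $2\times 2$ minors), and $X$ is totally real. Forms of degree $2d$ on $\P^n$ are exactly the quadratic forms on $X$, and globally nonnegative forms of degree $2d$ correspond to $P_X$. Let $L$ be the linear span of $\nu_d(C)$ inside $\P^N$. Since $C\subset\P^n$ is an elliptic normal curve, it is projectively normal, and $\nu_d(C)$ is embedded by the complete linear system $|\sO_C(d)|$ of degree $nd$. Hence $\nu_d(C)$ is an elliptic normal curve in $L\cong \P^{nd-1}$.

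Two hypotheses of \Cref{cor:ext} must be checked. First, $\nu_d(C)=L\cap X$ scheme-theoretically. This holds if $I(C)$ is generated in degree $\le d$, because then $I(C)_d$ generates the ideal of $C$ and cuts out $\nu_d(C)$ as the intersection of $X$ with the hyperplanes corresponding to $I(C)_d$. For $n\ge 4$, elliptic normal curves are cut out by quadrics, so $d\ge 2$ suffices. For $n=3$, the curve is a complete intersection of two quadrics, so again $d\ge 2$ suffices. For $n=2$, $C$ is a plane cubic, so we need $d\ge 3$; this explains the degree restrictions in the statement. Second, the tangent condition $T_pC=T_pX\cap L$ is what I expect to be the main obstacle. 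Here $T_p\nu_d(\P^n)$ is the span of $p$ and the images of the partial derivatives. Its intersection with $L$ consists of points corresponding to linear functionals on $\R[\P^n]_d$ that factor through the tangent-order data at $p$ and vanish on $I(C)_d$. Dually, the condition amounts to saying that the forms $\ell\cdot g$, with $g$ vanishing at $p$ to first order and $\ell$ ranging over $I(C)_d$, together with the degree-$d$ forms vanishing to order two at $p$, cut out exactly $T_pC$. The plan is to show this via smoothness of $C$. Because $I(C)_d$ generates the ideal locally and $C$ is smooth at $p$, the differentials $dg_p$ for $g\in I(C)_d$ span the conormal space $T_pC^\perp$ inside $T_p^*\P^n$. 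This gives the codimension count $\dim(T_pX\cap L)=1$.

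Having verified both hypotheses, \Cref{cor:ext} extends every nonnegative quadric on $\nu_d(C)$, that is, every nonnegative form of degree $2d$ on $C$, to a nonnegative quadric on $X$, which is a globally nonnegative form of degree $2d$ on $\P^n$. This proves the first claim.

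For the ``in particular'' part, take a stubborn form of degree $2d$ on $C$. Such a form exists: by \cite[Cor.~3.4.10]{baldiblekhermannsinn24} and \Cref{thm:many_zeroes_implies_stubborn}, using $d$-normality of $C$, there is a real-rooted nonnegative non-sos form of degree $2d$ that is extreme in $P_{C,2d}$, with $nd$ zeros of multiplicity $2$. Extend it to a globally nonnegative form $f$. By \Cref{prop:inherit_stubbornness}, $f$ is stubborn on $\P^n$. Every real zero of $f$ lies on $C$, because the lifting in \Cref{thm:general_lifting} adds $\lambda G$, which is strictly positive off $C$ for large $\lambda$. So $f$ has at most $nd$ real zeros. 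The local analysis in the proof of \Cref{thm:general_lifting} shows the principal part at each zero is the positive definite quadratic form $c_2t^2+\lambda\sum\ell_i^2$ (here $m=1$). Hence the Hessian is positive semidefinite of rank $n$, so each real zero is a simple node.
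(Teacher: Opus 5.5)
Your proposal is correct in substance and follows the paper's proof. Both re-embed by $\nu_d$, verify $\nu_d(C)=L\cap\nu_d(\P^n)$ and $T_pC=T_pX\cap L$ from generation of $I(C)$ in degree $\le d$, apply \Cref{cor:ext}, and then use \Cref{prop:inherit_stubbornness} and the local construction in \Cref{thm:general_lifting} for the nodes. Your conormal-space argument, that the differentials of $I(C)_d$ span $T_pC^\perp$, supplies the justification the paper only asserts. The sentence before it about forms $\ell\cdot g$ is garbled; it should simply say that $I(C)_d$ together with the degree-$d$ forms singular at $p$ cut out $T_pC$, which is what you then prove.

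One correction, inherited from the statement (the paper's proof has the same slip): an elliptic normal curve in $\P^n$ has degree $n+1$. Your case analysis (plane cubic for $n=2$, complete intersection of two quadrics for $n=3$) tacitly uses this. Hence $\nu_d(C)$ spans $\P^{(n+1)d-1}$, not $\P^{nd-1}$, and the lifted stubborn forms have exactly $(n+1)d$ real zeros, each a node. The count $nd$ cannot be right: for $n=2$, $d=3$ it would give a stubborn sextic with $\delta^{\R}\le 6$, contradicting \Cref{thm:stubbornSextics}. By contrast, $(n+1)d=9$ is consistent with \Cref{thm:delta_geq_9}.

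A smaller point: the principal part at a zero is not exactly $c_2t^2+\lambda\sum\ell_i^2$. It also contains cross terms in $t\ell_i$ and $\ell_i\ell_j$, and it becomes positive definite only for $\lambda$ large, by the Schur complement step. That is what gives Hessian rank $n$.
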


\begin{proof}
We apply Corollary \ref{cor:ext} to the $d$-th Veronese embeddings of $C$ and $\P^n$. The ideal of $C$ in $\P^n$ is defined by quadrics when $n\geq 2$ and by a single cubic when $n=2$. Since we have $d\geq 2$ for $n\geq 3$ and $d\geq 3$ for $n=2$, we see that $\nu_d(C)=L\cap\nu_d(\P^n)$ and the tangent space conditions hold. We can build a stubborn polynomial $F$ of degree $2d$ on $C$ with exactly $nd$ real zeroes. The construction of the lifting in the proof of Theorem \ref{thm:general_lifting} with adding $\ell_1^2+\dots+\ell_{k-1}^2$ ensures that all the real zeroes of the lifted form $\tilde F$ are simple nodes. Also, $\tilde F$  inherits its stubbornness from $F$ by Proposition \ref{prop:inherit_stubbornness}.
\end{proof}

\begin{ex}
We now show that if the points of intersection of $C$ and $F$ do not impose independent conditions on linear forms on $C$, then nonnegative lifting is not always possible.
For this, let us consider the Edge quartic curve $E$ in $\mathbb{P}^2$ studied in \cite{MR2781949}. It is cut out by a quartic polynomial
\begin{align*}
   G = 25(x^4+y^4+z^4)-34(x^2y^2+x^2z^2+y^2z^2).
\end{align*}
To bring this example in line with Theorem \ref{thm:general_lifting}, we let $X\subset \mathbb{P}^{14}$ be the $4$-th Veronese embedding of $\mathbb{P}^2$ and let $C$ be the $4$-th Veronese embedding of the Edge quartic. Then $C$ is a transverse hyperplane section of $X$, and in particular the intersection conditions of Theorem \ref{thm:general_lifting} are satisfied. A quadratic form on $C$ is an octic form on $E$. 

\begin{figure}[h!]
    \includegraphics[width=8cm]{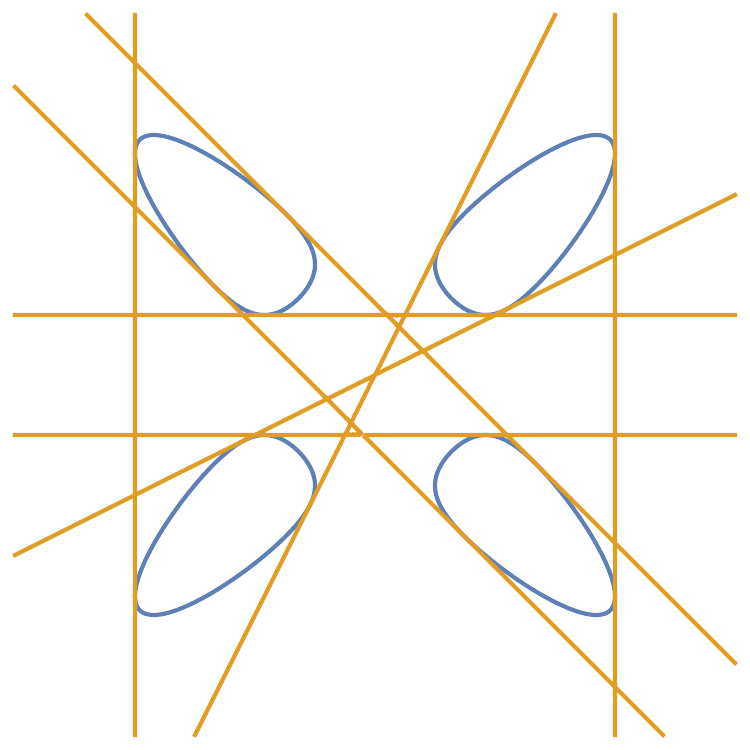}
   
\caption{The Edge quartic and nonnegative octic built from its bitangents.}\label{fig:Edge}
\end{figure}

To draw pictures (see Figure \ref{fig:Edge}), we work with dehomogenized polynomials, and so let $g(x,y)=25(x^4 + y^4 + 1) - 34 (x^2y^2 + x^2 + y^2)$. Let $f$ be the product of the following $8$ bitangents of $E$: $b_1=x-2$, $b_2=-x-2$, $b_3=\frac{1}{2}-y$, $b_4=-\frac{1}{2}-y$, $b_5=x-2y$, $b_6=x-\frac{1}{2}y$, $b_7=x+y+\frac{3}{5}$, $b_8=x+y-\frac{3}{5}$. Then $f=b_1\cdots b_8$ is nonnegative and real-rooted on $C$. 

The divisor $D$ of the homogenization $F$ of $f$ on $C$ consists of $16$ points each of multiplicity $2$, that is, $D=2\cdot \left( p_1+\dots+p_{16} \right)$. Therefore, $E=\frac{1}{2}D=p_1+\dots+p_{16}$ consists of $16$ points of multiplicity $1$. Since $C\subset \mathbb{P}^{13}$, it is not possible for the points of $E$ to impose independent conditions on linear forms on $C$.  

In order to lift $F$ to be nonnegative on $\mathbb{P}^2$, there needs to exist a quartic multiplier $H$ such that the gradient of $F-G\cdot H$ is 0 at the 16 intersection points $p_1,\dots, p_{16}$ of $F$ and $G$. In other words, we must have $H(p_i)=\frac{||\nabla F (p_i)||}{||\nabla G (p_i)||}$ for $i=1,\dots,16$. Here, we are then solving a system of $16$ affine linear equations in $14$ variables, and so we do not expect a solution. A direct computation shows that indeed such a solution does not exist.  
\end{ex}

\section{Ternary sextics and weak Del Pezzo surfaces}

We now consider nonnegative forms of degree $6$ in $\P^2$. In analogy with curve results, we show that stubbornness of a ternary sextic $F$ is determined by the number of real zeroes of $F$, counted appropriately via the real delta invariant defined below. 
It is well-known that a reducible nonnegative ternary sextic, or a nonnegative ternary sextic with infinitely many zeroes is a sum of squares \cite{Choi1980RealZO}. We use these facts repeatedly throughout this section. Moreover, it is easy to see that the maximal number of real zeroes, counted via the real delta invariant, for a nonnegative ternary sextic that is not a sum of squares is 10 -- this comes from the degree-genus formula for planar curves. It was shown in \cite{Stubborn2024} that a nonnegative ternary sextic with 10 zeroes is stubborn. We will improve this result by showing a conjecture from \cite{Stubborn2024}, claiming that a nonnegative ternary sextic which is not a sum of squares is stubborn if and only if its real delta invariant is at least $9$.

First, we show that a ternary sextic with at most $8$ zeroes is not stubborn, using a connection with the theory of weak Del Pezzo surfaces.

\subsection{Sextics with few zeroes are not stubborn}
To better illustrate the main idea behind the fact that nonnegative ternary sextics with few real zeroes are not stubborn, let us first discuss the case of zeroes in general position, which is easier on a technical level.
Following the notation in \cite[Surfaces de Del Pezzo III]{demazure}, in the following we write $X(p)$ for the blow-up of a surface $X$ in a point $p\in X$ and $X(\Phi)$ for the blow-up of $X$ in a finite set of points $\Phi \subset X$, or an iterated blow-up $\pi\colon X(p_1,\ldots,p_r) \to X(p_1,\ldots,p_{r-1}) \to \ldots \to X(p_1) \to X$ with $\Phi = (p_1,\ldots,p_r)$ and $p_i \in X(p_1,\ldots,p_{i-1})$. 

\begin{ex}\label{ex:genericSextic}
    Let $F\in \R[x,y,z]_6$ be a nonnegative ternary sextic with $r\leq 8$ real zeroes in $\P^2(\R)$. We assume that these zeroes $p_1,\ldots,p_r$ are ordinary double points (that is, the Hessian of $F$ at $p_i$ has rank $2$) and in general position in the following sense: no three points are collinear and no 6 points are on a conic. We show that $F$ is then not stubborn, by applying \cite[Theorem 4.11]{Scheiderer2011}; the main case to discuss is when $F$ is not a sum of squares. However, \cite[Theorem 4.11]{Scheiderer2011} does not directly apply, since our polynomial $F\in H^0(\P^2,\sO_{\P^2}(6)) = \R[x,y,z]_6$ is not strictly positive. To remedy this situation, we change the surface that we are working on by blowing up $\P^2$ at the $r$ zeroes $p_1,\ldots,p_r$ to obtain $\pi \colon Z = \P^2(p_1,\ldots,p_r) \to \P^2$.
    
    Hereafter, we follow the notation of \cite[Surfaces de Del Pezzo]{demazure}. The strict transform of $\sV(F)\subset \P^2$ is an irreducible curve on $Z$ defined by a section $s\in H^0(Z,\omega_Z^{\otimes -2})$, where $\omega_Z = \sO_Z(- 3 E_0 + \sum_{i=1}^r E_i)$ is the canonical bundle of $Z$, $E_0$ is the strict transform of a general line and $E_1, \dots, E_r$ are the exceptional divisors over $p_1,\dots, p_r$. Since the points $p_i$ are isolated real zeroes that are nodes of the complex curve, the section $s$ does not vanish on any real point of $Z$. Now we can apply Scheiderer's result \cite[Theorem 4.11]{Scheiderer2011} on $Z$ with, using his notation, $L = M = - \omega_Z$ and $f = g = s$. What we need to check is that the anticanonical divisor on $Z$ is ample, which is implied by the assumption of general position of the zeroes, since $Z$ is a Del Pezzo surface \cite[Chapter 8]{Dolgachev2012}. So, $s^{k} \in H^0(Z,\omega_Z^{\otimes -2k})$ is a sum of squares for all sufficiently large $k$. Since sections in $H^0(Z,\omega_Z^{\otimes -k})$ correspond to curves of degree $3k$ in $\P^2$ that vanish at $p_1,\ldots,p_r$ to order $k$, we can push this representation forward to see that $F^k = \pi_*(s^k)\in H^0(\P^2,\sO_{\P^2}(6k))$ is a sum of squares. Therefore, $F$ is not stubborn. 
\end{ex}

For more special nonnegative sextics $F$, this strategy has two main problems. First of all, the real zeroes do not have to be in general position so that the blow-up is not necessarily a Del Pezzo surface (but rather a weak Del Pezzo surface). Moreover, the zeroes might not be ordinary double points (so the Hessian might have a smaller rank), which then requires an iterated blow-up procedure to get a strictly positive section of some line bundle. These issues can lead to surfaces whose anticanonical bundle is not ample, so that Scheiderer's results do not apply. We will address these problems by a more careful analysis, for which we mainly rely on \cite{demazure}. As an explicit example, in Section 6 of \cite{Stubborn2024} the authors consider a perturbed Motzkin polynomial $M_1(x,y,z)=x^4y^2+x^2y^4+z^6-x^2y^2z^2$ which is not a sum of squares, with two zeroes at $[1:0:0]$ and $[0:1:0]$, each having real delta invariant 3. However, $M_1^3$ is a sum of squares, and thus $M_1$ is not stubborn.

As a first step, we show that the embedded resolution of singularities $\pi\colon X \to \P^2$ of $\sV(F)\subset \P^2$ for a nonnegative sextic $F$ that is not a sum of squares leads to a weak Del Pezzo surface. 
We choose the following basis of the Picard group $\Pic(\P^2(\Phi))$ as in \cite[Surfaces de Del Pezzo II.2]{demazure}, constructed iteratively via $\pi\colon \P^2(\Phi) \to \P^2$ as follows. Let $E_0 = \pi^{-1}(L)$ be the preimage of a general line $L\subset \P^2$ and let $E_1,\ldots,E_r$ be the total transforms of $p_1,\ldots,p_r$. Then the map $\Pic(\P^2(\Phi)) \to \Z^{r+1}$, $\xi\mapsto (\xi.E_0; \xi.E_1,\ldots,\xi.E_r)$ is an isomorphism of lattices. Writing $\omega$ for the canonical class of $\P^2(\Phi)$, we have 
\begin{align}
 & E_0^2 = 1 \ ; \ E_i^2 = -1 \ ; \ E_i.E_j = 0 \text{ for } 0<i\neq j ; \\
 & \omega = - 3 E_0 + \sum_{i=1}^r E_i \mapsto (-3;-1,-1,\ldots,-1) \in \Z^{r+1} \\
 & \omega.E_0 = -3 \ ; \ \omega.E_i = -1 \text{ for } i>0 \ ; \ \omega.\omega = 9 - r.
\end{align}

\begin{lem} \label{lem:ternary_sextics_enriques}
    Let $F\in\R[x,y,z]_6$ be a nonnegative ternary sextic that is not a sum of squares. Let $\pi\colon X \to \P^2$ be an embedded resolution of the real singularities of $\sV(F)\subset \P^2$ obtained as an iterated blow-up $X = \P^2(\Phi)$  for $\Phi = (p_1,\ldots,p_r)$, $\Phi_i = (p_1,\ldots,p_i)$, and $p_{i+1}\in \P^2(\Phi_i)(\R)$.
    Then $\Phi$ is in almost general position in the sense of \cite[Surfaces de Del Pezzo III.2]{demazure}: 
    \begin{enumerate}
        \item for all $i = 1,\ldots,r$, the point $p_i\in \P^2(\Phi_{i-1})$ does not belong to any of the irreducible components of the divisors $E_1,\ldots,E_{i-1}\subset \P^2(\Phi_{i-1})$ which is not equal to some $E_j$.
        \item no line in $\P^2$ passes through $4$ points of $\Phi$;
        \item no irreducible conic in $\P^2$ passes through $7$ points of $\Phi$.
    \end{enumerate}
\end{lem}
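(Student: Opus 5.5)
Each of the three conditions will be derived by contradiction from the classical facts used throughout this section: a nonnegative ternary sextic that is reducible, or has infinitely many real zeroes, is a sum of squares \cite{Choi1980RealZO}. So $F$ may be assumed irreducible over $\R$, and $\sV(F)$ is then a curve with finitely many real points. I would also like $F$ to be absolutely irreducible. If $F=G\bar G$ with $G$ a complex cubic, then $F=(\mathrm{Re}\,G)^2+(\mathrm{Im}\,G)^2$ is a sum of squares. Hence the complex sextic curve $V=\sV(F)$ is irreducible, and its arithmetic genus is $10$. The key numerical tool is then the following bound. Let $m_i$ denote the multiplicity at $p_i$ of the strict transform of $V$ on $\P^2(\Phi_{i-1})$. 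Then $\sum_i m_i(m_i-1)/2\le 10$. Moreover, since every real zero of the nonnegative form $F$ is a singular point of even local behaviour, each real singular point in the resolution has $m_i\ge 2$.

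\textbf{Conditions (2) and (3).} These follow from Bézout applied to $V$ and a line or conic, in the form of intersection multiplicity counted on the blow-up. Suppose a line $\ell$ passes through $4$ points of $\Phi$ (possibly infinitely near, meaning the strict transform of $\ell$ passes through them). Then $\ell\cdot V\ge\sum m_i\ge 4\cdot 2=8>6$. Since $V$ is irreducible of degree $6$, it would have to contain $\ell$, which is a contradiction. Similarly, an irreducible conic through $7$ points of $\Phi$ gives intersection $\ge 14>12$, and this is again impossible. For the infinitely near case I would argue with the strict transforms on $X$: $\tilde V\cdot\tilde\ell=6-\sum_{p_i\in\ell}m_i\ge 0$, using that $\tilde V$ and $\tilde\ell$ are distinct irreducible curves.

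\textbf{Condition (1).} This is the main obstacle. It says that $p_i$ never lies on the strict transform of an earlier exceptional curve $E_j$ with $E_j$ non-irreducible, that is, $p_i$ is not a satellite point on the intersection of two exceptional components. (In Demazure's phrasing, $p_i$ does not lie on a $(-2)$-curve component of the total transforms.) My plan is as follows. Suppose $p_i$ lies on the strict transform of $E_j$ with $j<i-1$, where $E_j$ has already been blown up again at a point lying over it. Then $p_i$ is proximate to both $p_j$ and $p_{i-1}$. The proximity inequality gives $m_j\ge\sum_{k \text{ prox. } j}m_k$. Combined with $m_k\ge 2$ at each real singular centre and the genus bound $\sum m_k(m_k-1)/2\le 10$, I would check the possible configurations and show that this forces too large a genus drop, or forces a point of odd local sign change (a satellite point with even multiplicities on a real branch). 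In the latter case $F$ would change sign, contradicting nonnegativity. Alternatively, $V$ would acquire a nonreduced or reducible structure, which is excluded. Concretely, I would try to show that a satellite point forces $m_j\ge 4$, hence $m_j(m_j-1)/2\ge 6$, with further points taking the count above $10$. If this counting is not sharp enough, the fallback is the local sign analysis of a nonnegative isolated real zero. At such a zero the resolution proceeds through free points of the real branches' conjugate pairs, and satellite points occur only when conjugate branches separate, where the multiplicities are forced to be even and large.
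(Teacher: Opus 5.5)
\textbf{Conditions (2) and (3).} These are fine. They are essentially the paper's B\'ezout argument, and your strict-transform version $\tilde V\cdot\tilde\ell=6-\sum m_i$ also covers infinitely near points, which the paper passes over.

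\textbf{Condition (1): the gap.} The genus bound $\sum m_i(m_i-1)/2\le 10$ is too weak. If two centers are proximate to $p_j$, the proximity inequality gives only $m_j\ge 4$, and the total contribution is then $6+1+1=8\le 10$, so nothing is pushed above $10$. In fact no argument that uses $F$ not being a sum of squares only through nonnegativity, absolute irreducibility, finiteness of the real zero set, the genus count, or local sign analysis can prove (1).

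A counterexample is $F=x^2y^2z^2+x^6+y^6$. It is nonnegative and absolutely irreducible, and its only real zero is $[0:0:1]$. That point is a quadruple point with tangent cone $x^2y^2$, and the first neighbourhood contains two distinct real nodes of the strict transform. After blowing up $p_1$ and one of these nodes $p_2$, the other node $p_3$ lies on $\wh{E_1}$, so (1) fails, while $\delta^{\R}(F)=8$. Local sign analysis cannot exclude this configuration, since it is realized by a nonnegative form.

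Two smaller inaccuracies. Condition (1) excludes not only satellite points but any second center on the strict transform of an already blown-up $E_j$, as in this example. Also, $p_i$ need not be proximate to $p_{i-1}$. What matters is that $p_j$ then has two proximate centers.

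\textbf{The missing ingredient.} You need the global fact that a nonnegative ternary sextic with a real zero of multiplicity $>2$ is a sum of squares. The paper cites \cite[Lemma 21]{Stubborn2024}. One way to see it: at a quadruple point write $F=z^2f_4+zf_5+f_6$. This is a psd $2\times 2$ matrix of binary forms, and a matrix Fej\'er--Riesz factorization of that matrix yields a sum of squares of cubics. This is precisely where the hypothesis that $F$ is not a sum of squares enters beyond irreducibility.

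With this fact, every real zero of $F$ has multiplicity exactly $2$. Multiplicities of strict transforms do not increase at points of the first neighbourhood (\cite[Lemma 1(a)]{Lipman1975}, or the proximity inequality itself). Each center is a real singular point of the strict transform, so $m_i=2$ for every $p_i\in\Phi$. The proximity inequality $2=m_j\ge\sum_{k\text{ prox. }j}m_k$ then allows at most one center proximate to each $p_j$, which is exactly condition (1). This is the paper's argument.
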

To illustrate condition (1), consider the exceptional divisor $E_1\subset \P^2(p_1)$, which is irreducible, and a point $p_2 \in E_1$. We can then blow up $p_2$ and obtain $E_1,E_2\subset \P^2(p_1,p_2)$. The total transform $E_1$ in $\P^2(p_1,p_2)$ has now two irreducible components, the exceptional divisor $E_2$ and the strict transform $\wh{E_1}$ of $E_1$ in $\P^2(p_1)$. In condition (1), we can now choose $p_3$ to be on the irreducible divisor $E_2$, but not on $\wh{E_1}$. 
\begin{proof}
    Suppose that four (distinct) real points of $\Phi$ lie on a line $L$ in $\P^2$. As the sextic $F$ is nonnegative, then $F$ is singular at each of these points so that the intersection multiplicity of $L$ and $\sV(F)$ at each of these points is at least $2$. By B\'ezout's Theorem, this implies that $F$ vanishes identically along the real line $L$. Then \cite[Main Theorem. (A)]{Choi1980RealZO} shows that $F$ is a sum of squares. The same reasoning applies for seven points on an irreducible conic: 
    B\'ezout's theorem again implies that $F$ vanishes identically along a real conic if $F$ has seven zeroes on it. So, (2) and (3) are satisfied.
    
    It remains to check condition (1). Here, we use that any real zero of $F$ is a point of multiplicity $2$ because $F$ is not a sum of squares, see e.g. \cite[Lemma 21]{Stubborn2024}.
    Then the strict transform of $\sV(F)$ also has a zero of multiplicity at most $2$ on any exceptional divisor, see e.g. \cite[Lemma 1.(a)]{Lipman1975}. 
    By induction, this implies condition (1).
\end{proof}

\begin{Def}[{see \cite{Stubborn2024}}]
    \label{def:delta_invariant}
    Recall that the \emph{multiplicity} of a polynomial $f \in \R[x,y]$ at $p\in \mathbb{A}^2$ is the largest integer $m=m_{p}(f)$ such that all the partial derivatives of $f$ of order at most $m-1$ vanish at $p$. 
    Then the \emph{(local) real delta invariant} $\delta_{p}^{\R}(f)$ is defined as
    \begin{align}\label{eq:delta}
        \delta_{p}^{\R}(f) \coloneqq \frac{m_{p}(f)(m_{p}(f)-1)}{2} + \sum_{p'} \delta^{\R}_{p'}(f'),
    \end{align}
    where the sum is taken over all real first-order infinitely near points of $f=0$ at $p$.   
    We define the (local) real delta invariant of a real form $F\in \R[x,y,z]$ at $p\in \P^2(\R)$ by setting $\delta_{p}^{\R}(F):= \delta_{p}^{\R}(f)$, where $f$ is the dehomogenization of $F$ with respect to an affine chart containing $p$. 
    Given the local nature of \eqref{eq:delta}, this definition of $\delta^{\R}_{p}(F)$ does not depend on the choice of an affine chart. Finally, we define the \emph{total real delta invariant} as
    \[
        \delta^{\R}(F) \coloneqq \sum_{p\in \P^2(\R)} \delta_{p}^{\R}(F).
    \]
\end{Def}

The (local) real delta invariant can be seen as an analog of the classical complex one: we refer the reader to \cite{Stubborn2024} for more information. We can now prove our first result on ternary sextics.

\begin{Thm}\label{thm:stubbornSextics}
  Let $F\in\R[x,y,z]$ be a nonnegative ternary sextic that is not a sum of squares. If $\delta^\R(F)\leq 8$, then $F$ is not stubborn. 
\end{Thm}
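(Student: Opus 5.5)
The plan follows Example~\ref{ex:genericSextic}, removing the general-position hypotheses by working on a weak Del Pezzo surface and its anticanonical model.

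Let $\pi\colon X=\P^2(\Phi)\to\P^2$ be the embedded resolution of the real singularities of $\sV(F)$ from Lemma~\ref{lem:ternary_sextics_enriques}. Each real zero of $F$ is a double point (cf.\ \cite[Lemma 21]{Stubborn2024}). The iterated blow-up along infinitely near real points of multiplicity $2$ therefore involves exactly as many blow-ups as the real delta invariant counts, so $r=\delta^\R(F)\le 8$. Each blow-up subtracts $2E_i$ from the pullback of the sextic, and so $\pi^*F=s\cdot\prod(\text{exceptional})^2$ with $s\in H^0(X,\omega_X^{\otimes -2})$. By construction, the strict transform $s$ has no real zeroes on $X$. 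By Lemma~\ref{lem:ternary_sextics_enriques}, $\Phi$ is in almost general position, so by Demazure's theory $X$ is a weak Del Pezzo surface of degree $9-r\ge 1$: $-\omega_X$ is nef and big but possibly not ample, and $(-\omega_X).C=0$ exactly for the $(-2)$-curves.

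The main obstacle is that Scheiderer's theorem \cite[Theorem 4.11]{Scheiderer2011} needs an ample line bundle. To handle this, I would pass to the anticanonical model $Y=\mathrm{Proj}\bigoplus_k H^0(X,\omega_X^{\otimes -k})$. Since $r\le 8$ (degree $\ge1$), $-\omega_X$ is semiample, and $\sigma\colon X\to Y$ contracts exactly the $(-2)$-curves to rational double points, with $-\omega_X=\sigma^*\sO_Y(1)$ for an ample $\sO_Y(1)$. Since $\sigma^*$ identifies sections and $s$ is strictly positive on $X(\R)$, hence on $Y(\R)=\sigma(X(\R))$ (real points of $Y$ lift, possibly via real points of contracted curves), $s$ becomes a strictly positive section of $\sO_Y(2)$ on the surface $Y$. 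Scheiderer's result applies to totally real projective varieties of dimension $\ge 2$ with arbitrary singularities, as recalled after Theorem~\ref{thm:convexity}. It gives that $s^k$ is a sum of squares of sections of $\sO_Y(k)$ for all large $k$, and we take such a $k$ odd. If $\deg Y=1$, $\sO_Y(1)$ is not very ample, so I would apply the theorem with $L=\sO_Y(2)$ (very ample for degree $\ge 2$, and $\sO_Y(3)$ or higher in degree $1$) and $M=\sO_Y(1)$ in his notation, as in the example. One must also check total reality of $Y$, which follows from $F$ having real points, or from $Y$ being rational with real smooth points.

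Finally, I would push the certificate back down. Sections of $\omega_X^{\otimes -k}$ are exactly plane curves of degree $3k$ satisfying the multiplicity conditions at the infinitely near points $\Phi$ (order $k$ at each). Writing $s^k=\sum g_i^2$ and multiplying by the squared exceptional factors then gives $F^k=\sum (\pi_*g_i)^2$ in $\R[x,y,z]_{6k}$. Hence $F^k$ is a sum of squares for some odd $k$, and $F$ is not stubborn. The steps needing the most care are the count $r=\delta^\R(F)$ together with the claim that $s$ has no real zeroes after the resolution, and the semiampleness and positivity transfer to $Y$ in degree $1$ ($r=8$).
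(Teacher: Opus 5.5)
Your proposal is correct and follows the paper's proof. It resolves the real singularities to obtain a weak Del Pezzo surface carrying a strictly positive section of $\omega^{\otimes -2}$. It passes to the anticanonical model to get ampleness, applies Scheiderer's Positivstellensatz there, and pulls the certificate back and pushes it down to $\P^2$.

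The only difference is how you justify positivity at the singular points of the anticanonical model. The paper uses Demazure's result that the strict transform is irreducible and has zero intersection with the fundamental cycles, hence is disjoint from them. You instead use $Y(\R)=\sigma(X(\R))$. That equality is true here but needs a reason, because real points of a resolved surface singularity need not lift in general. The reason is that all blown-up points are real, so complex conjugation acts trivially on $\Pic$, and every $(-2)$-curve is then a real rational curve containing real points.
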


\begin{proof}
    We adapt the strategy in Example~\ref{ex:genericSextic} to the more degenerate cases relying on \cite[Surfaces de Del Pezzo II-V]{demazure}. Let $\pi \colon Z = \P^2(\Phi) \to \P^2$ be the embedded resolution of the real singularities of $\sV(F)\subset \P^2$. 
    Since $F$ is nonnegative and not a sum of squares, it follows that $Z$ is a weak Del Pezzo surface, see Lemma~\ref{lem:ternary_sextics_enriques} and \cite[Cor.~8.1.24]{Dolgachev2012}, and that the strict transform of $\sV(F)$ in $Z$ has no real points. Since $F$ is a section of $\omega_{\P^2}^{\otimes -2}$ and $Z$ is a resolution of the real singularities of $\sV(F)$, it gives a section $f$ of the line bundle $\omega_Z^{\otimes  -2}$ corresponding to the strict transform of $\sV(F)$ in $Z$. 
    
    Let $\phi \colon Z \to \overline{Z}$ be the map to the anticanonical model of $Z$ (see \cite[Surfaces de Del Pezzo V]{demazure}). The map $\phi$ induces canonical isomorphisms $\phi_*(\omega_Z^{\otimes n}) = \omega_{\overline{Z}}^{\otimes n}$ and $\phi^*(\omega_{\overline{Z}}^{\otimes n}) = \omega_Z^{\otimes n}$ where the invertible sheaf $\omega_{\overline{Z}}$ is defined as the push forward $\phi_*(\omega_Z)$ of the canonical bundle of $Z$, see \cite[Surfaces de Del Pezzo V, Th\'eor\`eme 2(d)]{demazure}. The section $f$ of $\omega_Z^{\otimes -2}$ then gives a section $\phi_*(f)$ of $\omega_{\overline{Z}}^{\otimes -2}$.
    
    We now show that this section has no real zero on $\overline{Z}$ using \cite[Surfaces de Del Pezzo V, Th\'eor\`eme 2]{demazure}. This theorem shows that $\phi_*(f)$ has no zeroes outside the singularities of $\overline{Z}$ since $\phi$ is an isomorphism between $U\subset Z$ and $\overline{Z}_{reg}$, by parts (a) and (c) of the above theorem (where $U$ is the complement of the fundamental cycles, see \cite[p.~54]{demazure}). Now, the vanishing of the section $\phi_*(f)$ at a singularity of $\overline{Z}$ means that $f$ would have to vanish on the fundamental cycle being mapped to the singularity. This, however, cannot happen due to \cite[Surfaces de Del Pezzo IV, Corollaire 1, page 53]{demazure}: 
    if $f$ had a zero on a fundamental cycle $\Gamma$, then it would vanish entirely along $\Gamma$. However, $f$ defines the strict transform of $\sV(F)$, which is irreducible and cannot contain an exceptional divisor as a component. 
    
    From the previous paragraph we deduce that $\phi_*(f)$ is a positive section of $\omega_{\overline{Z}}^{\otimes -2}$, which is ample. We can then use Scheiderer's result \cite[Theorem 4.1]{Scheiderer2011} (with $X = Z$, $L=M = \omega_{\overline{Z}}$, $r=0$, and $f = g = \phi_*(f)$), which shows that there is an odd $k\in \N$ such that $\phi_*(f)^k\in H^0\left( \overline{Z},\omega_{\overline{Z}}^{\otimes -2k}\right)$ is a sum of squares of sections of $\omega_{\overline{Z}}^{\otimes -k}$. By the isomorphism $\phi^*$, we can pull this representation back to show that $f^k$ is a sum of squares. Finally, this representation of $f^k\in H^0(Z,\omega_Z^{\otimes -2k})$ as a sum of squares can be pushed forward through the resolution of singularities to give a sum of squares representation of $F^k = \pi_*(f^k)$. 
\end{proof}

The following example illustrates the role of fundamental cycles for the anticanonical model as in the previous proof.
\begin{ex}
    Consider a ternary sextic that near $(0:0:1)$ looks like $x^2 + y^4 = 0$. To resolve the singularity of the corresponding curve, we need to do two consecutive blowups. Blowing up $(0:0:1)\in \P^2$ in coordinates $a_1$ and $b_1 = y$ with $x = a_1 b_1$ transforms the local equation $x^2 + y^4 = 0$ to $(a_1 b_1)^2 + b_1^4 = 0$, which factors as $b_1^2 ( a_1^2 + b_1^2)$. The exceptional divisor on this blow-up $X_1$ is defined in those coordinates by $b_1 = 0$ and the form $(a_1^2 + b_1^2)$ has a double root on this exceptional divisor $E_1$. So we blow up the surface $X_1$ at the point $(0,0)$ in the given coordinates again. Denote the new surface by $X_2$. In local coordinates $a_2$ and $b_2 = b_1$ with $a_1 = a_2b_2$ the equation becomes $0 = a_1^2 + b_1^2 = (a_2b_2)^2 + b_2^2 = b_2^2 ( 1 + a_2^2)$. The new exceptional divisor is defined by $b_2 = 0$ and the form $1 + a_2^2$ has no real zero (two complex conjugate zeroes). We have resolved the singularity. 

    Now the final surface $X_2$ is a Del Pezzo surface whose Picard group $\Pic(X_2)$ is $\Z^3$. Demazure chooses a basis with intersection form $\mathrm{diag}(1,-1,-1)$ in \cite[Surfaces de Del Pezzo II]{demazure}. The \emph{fundamental cycles} are the effective divisors $D$ on $X_2$ with the property $D^2 = -2$ and $D.\omega_{X_2} = 0$. There are only two solutions to these two equations, namely $(0,1,-1)$ and $(0,-1,1)$. Only one of the two is effective, namely $(0,1,-1)$ which is the strict transform $\wh{E_1} = E_2 - E_1$ of the first exceptional divisor $E_1$ under the second blow-up. 

    These fundamental cycles play a special role for the anticanonical model of weak Del Pezzo surfaces \cite[Surfaces de Del Pezzo V.2]{demazure}. The strict transform of our sextic does not intersect this divisor $\wh{E_1}$ anymore. It did intersect $E_1$ in a point, namely in a simple node and the second blow-up removed this intersection point. Therefore, the push forward of the divisor defined by our ternary sextic to the anticanonical model of the weak Del Pezzo surface does not contain the singularity corresponding to the contraction of the fundamental cycle.
\end{ex}

\subsection{Sextics with many zeroes are stubborn}

We now present a proof that a nonnegative ternary sextic that is not a sum of squares and has sufficiently many zeroes must be stubborn. Thus we complete our classification of stubborn and non-stubborn ternary sextics via their number of real zeroes.
\begin{Thm}
    \label{thm:delta_geq_9}
    Let $F\in\R[x,y,z]$ be a nonnegative ternary sextic that is not a sum of squares. If $\delta^\R(F)\geq 9$, then $F$ is stubborn.
\end{Thm}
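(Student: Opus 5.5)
The plan is to inherit stubbornness from a plane cubic via Proposition~\ref{prop:inherit_stubbornness} and Corollary~\ref{cor:plane_cubics}. We find a real cubic curve $C=\sV(H)\subset\P^2$ such that the restriction $F|_C\in\R[C]_6$ is real-rooted with all zeroes at smooth points of $C$, and not a sum of squares on $C$. Here $F|_C$ is automatically nonnegative on $C(\R)$, since $F$ is globally nonnegative. Plane curves are $d$-normal for all $d$, so Corollary~\ref{cor:plane_cubics} gives that $F|_C$ is stubborn on $C$. Proposition~\ref{prop:inherit_stubbornness} then gives that $F$ is stubborn on $\P^2$.

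To construct $C$, we work on the resolution $\pi\colon Z=\P^2(\Phi)\to\P^2$ of the real singularities of $\sV(F)$, as in Lemma~\ref{lem:ternary_sextics_enriques}. Every real zero of $F$ has multiplicity exactly $2$, so $\delta^\R(F)$ equals the number $r$ of (possibly infinitely near) points blown up in $\Phi$. We choose a subsequence $\Phi'$ of $9$ of these points that is closed under the ``infinitely near'' order, truncating the last chains if $r=10$. Cubics through $\Phi'$, meaning sections of $-\omega_{\P^2(\Phi')}$, form a space of dimension at least $10-9=1$, so there is a cubic $H$ passing through $\Phi'$ with the prescribed infinitely near conditions.

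We then count intersections. The strict transform $\wt F$ of $\sV(F)$ in $\P^2(\Phi')$ lies in $|-2\omega|$, and the strict transform $\wt C$ of $C$ lies in $|-\omega|$, because each point imposes one condition. Hence $\wt F.\wt C=2\omega^2=2(9-9)=0$. So after the blow-ups the curves have no further intersection, real or complex, provided $\wt C$ and $\wt F$ share no component. Since $F$ is irreducible (reducible nonnegative sextics are sums of squares), the only issue is the degenerate case $C\subset\sV(F)$, which is impossible for degree reasons. The total intersection $18=6\cdot 3$ is therefore concentrated at the real points of $\Phi'$, with $2$ per point counted along the chain, and so $F|_C$ is real-rooted. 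Extra components of $C$ through singular points also need care.

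The main obstacle is ensuring that $C$ is totally real and that the zeroes of $F|_C$ lie at smooth points of $C$, and also handling the possibility that the pencil of such cubics is reducible or has a base locus. I would first try to show that $H$ can be chosen irreducible and smooth at the points of $\Phi'$, using the almost general position of Lemma~\ref{lem:ternary_sextics_enriques}. Its conditions (2) and (3), no $4$ points on a line and no $7$ on a conic, together with the fact that the nine points are all real, rule out forced splittings of the cubic. If $C$ were singular at some $p\in\Phi'$, then $C$ would pass through $p$ with multiplicity $2$ and absorb two conditions, and I would have to argue with the chain structure that this contradicts the count $\wt F.\wt C=0$. Reducible cases, such as a line through $3$ points together with a conic through $6$, would need a separate check, possibly replacing Corollary~\ref{cor:plane_cubics} by the $2$-torsion argument of Theorem~\ref{thm:many_zeroes_implies_stubborn} applied componentwise.

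Finally, $F|_C$ is not a square on $C$: a square $G^2$ with $G\in\R[C]_3$ and $\div G=\frac12\div F$ would yield, by $3$-normality, a cubic $G$ with $F-G^2\in (H)$, that is, $F=G^2+HK$. Then $F$ would decompose in a way that contradicts $F$ not being a sum of squares. I would deduce this contradiction via the sign of $HK$ near $C(\R)$, or alternatively via the nontriviality of the $2$-torsion class $[3H]-\frac12[\div F]$, using that $F$ itself is not a sum of squares. A real-rooted nonnegative form that is not a square is also not a sum of squares, as argued in the proof of Theorem~\ref{thm:many_zeroes_implies_stubborn}.
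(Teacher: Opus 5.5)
\textbf{The gap is the step you leave open at the end: you never show that $F|_C$ is not a square in $\R[C]_6$.} This is where the substance of the theorem lies, and without it you have not verified that $F$ is not a sum of squares on $C$, which Corollary~\ref{cor:plane_cubics} requires.

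The rest of your architecture is the paper's, and those parts are fine:
\begin{itemize}
\item You take an initial segment of nine blown-up points and find a cubic through them. Your dimension count $10-9=1$ replaces the paper's Riemann--Roch computation.
\item You use $\wt F.\wt C=0$ on $Z$ to get real-rootedness, then conclude via Corollary~\ref{cor:plane_cubics} and Proposition~\ref{prop:inherit_stubbornness}.
\item The paper handles the smoothness issue as you anticipate. If $C$ were singular at $p_j$, then $\wt C$ would have class $3E_0-\sum_{i\neq j}E_i-2E_j$, which meets $\wt F\sim 6E_0-2\sum E_i$ with intersection $-2$, contradicting B\'ezout on $Z$.
\item You do not need $C$ irreducible. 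Your claim that conditions (2), (3) prevent splitting is false anyway: a line through three of the points plus a conic through the other six is allowed. Corollary~\ref{cor:plane_cubics} applies to any totally real plane curve. Total reality holds because each component of $C$ meets $\sV(F)$ only at real smooth points of $C$.
\end{itemize}

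Neither of your suggested routes for the final step works. The $2$-torsion route is circular. By $3$-normality, the class $3[H]-\tfrac12[\div F]$ is trivial exactly when $F|_C$ is a square, which is what must be proved. Nothing in your argument links that class to the hypothesis that $F$ is not a sum of squares in $\R[x,y,z]$. The remark about the sign of $HK$ near $C(\R)$ is not an argument as it stands.

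The missing idea is that a square root of $F|_C$ is forced into the same linear system as $H$:
\begin{enumerate}
\item If $F=G^2+HK$, then $\div_C(G)=\tfrac12\div_C(F)$. So $\sV(G)$ passes through the same nine (possibly infinitely near) points, and its strict transform also lies in $|3E_0-\sum_{i=1}^9E_i|$.
\item Hence $\sV(G)$ and $\sV(H)$ meet only in the length-$9$ scheme given by these points. That scheme is a complete intersection with ideal $(G,H)$.
\item $F$ has multiplicity $2$ along the chain while $H$ is smooth there, so $K$ vanishes on this scheme too. Thus $K=\alpha G+\gamma H$, and $F=G^2+\alpha GH+\gamma H^2$ is a binary quadratic form evaluated at $(G,H)$.
\item Every real fibre $\sV(bG-aH)$ of $(G:H)\colon\P^2\dashrightarrow\P^1$ is a real cubic and so has real points. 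Therefore this binary form is nonnegative on $\P^1(\R)$, hence a sum of two squares of linear forms.
\item That makes $F$ a sum of squares of cubics in $\R[x,y,z]$, a contradiction.
\end{enumerate}
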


\begin{proof}
    In this proof, it is more convenient to use the language of divisors instead of line bundles.
    Let $\pi\colon Z = \P^2(p_1,\ldots,p_r) \to \P^2$ be an embedded resolution of the real singularities of $\sV(F)\subset \P^2$. Since $\delta^\R(F)\geq 9$, we have $r\geq 9$. 
    Indeed, since $F$ is a nonnegative ternary sextic that is not a sum of squares, we have $m_{p}(F)=2$ for any real zero $p\in \P^2(\R)$ of $F$. 
    If $\delta_{p}^{\R}(F)>1$, then by \eqref{eq:delta} the strict transform of $\mathcal{V}(F)$ intersects the exceptional fiber over $p$ in one point $p'$ where it has multiplicity $2$. In other words, we have $\delta^{\R}_{p}(f)=1+\delta^{\R}_{p'}(f')$ and $m_{p'}(f')=2$, cf. the proof of \cite[Lemma 26]{Stubborn2024}. Proceeding inductively, we see that $p\in \P^2$ with $\delta_{p}^{\R}(F)=k$ contributes with $k$ many points to $Z = \P^2(p_1,\ldots,p_r)$.
    Passing now to the global quantity $\delta^{\R}(F)$, we see that there must be at least $9$ points in $Z$.

    We first show $h^0(Z, 3E_0 -\sum_{i=1}^9 E_i) \geq 1$. This follows from the Riemann-Roch formula \cite[Theorem V.1.6]{zbMATH03572315}, here for $r=9$: \[ h^0(Z,D) = \frac12 D.(D-K) + 1 + p_a(Z) + h^1(Z, D) - h^0(Z,K-D)\]
    with $D = 3E_0 - \sum_{i=1}^9 E_i$. We have that $h^0(Z,D) \geq 1$, since $p_a(Z) = 0$, $h^1(Z, D) \geq 0$, $h^0(Z,K-D) = 0$ (since it intersects a general line in a negative number of points), and $D.(D-K) = 0$. To compute this last intersection product, we first expand
     \[ D.(D-K) = \left(3E_0 - \sum_{i=1}^9 E_i\right).\left(6E_0 - 2 \sum_{i=1}^9 E_i\right) = 18 + 2 \cdot \sum_{i=1}^9 E_i^2  = 0\]
    using $E_0.E_i = 0$, $E_i^2 = -1$ and $E_i.E_j = 0$ for $i,j\geq 1$, $i\neq j$. So we obtain $h^0(Z,D) \geq 1$ as desired.

    If $r=10$, the only change above is that $D-K$ becomes $6E_0 - 2\sum_{i=1}^9 E_i - 2 E_{10}$, which intersects $D = 3E_0 - \sum_{i=1}^9 E_i$ the same way because $E_{10}.E_i = 0$ for $1\le i \le 9$ and $E_0.E_{10} = 0$. Hence $h^0(Z,D) \geq 1$ holds true also in this case.
    
  So now pick a nonzero section $c\in H^0(Z,3E_0 - \sum_{i=1}^9 E_i)$. Its pushforward $\pi_*(c)$ corresponds to a plane cubic $C$.
  Since $F$ is irreducible, the curves $\sV(C)$ and $\sV(F)$ intersect in $18$ points in $\P^2$, counted with multiplicity. Since the strict transforms of these curves do not intersect on $Z$ (by the above calculation $D.(D-K) = 0$), all these intersection points must be real. In fact, all of them are non-singular points on the cubic: if the cubic was singular at a point $p_j$, then its strict transform would have class $3E_0 - \sum_{i\neq j} E_i - 2 E_j$, which, by a similar computation as above, gives a negative intersection $-2$ with the class $6E_0-2\sum_{i=1}^9 E_i$ of the strict transform of $\sV(F)$. This is a contradiction to B\'ezout's Theorem on $Z$. 

    We finish now by showing that the polynomial $F$ is stubborn in the homogeneous coordinate ring of the cubic curve. First we show that it is not a sum of squares. Suppose it is; since the polynomial $F$ has the maximal number of real roots on the cubic $\sV(C)$, it is an extreme ray of the cone of sums of squares, hence a square (cf. Section \ref{sec:cubics}). 

    Write $F = A^2 + CB$ for cubics $A,B\in \R[x,y,z]$. Since $F(p_1) = 0$ and $C(p_1) = 0$, it follows that $A(p_1) = 0$ as well, so that the strict transform of $\sV(A)$ in $\P(p_1)$ has class $3L-E_1$. Applying the same reasoning to $p_2$ and continuing inductively, the strict transform of $\sV(A)$ in $Z = \P(p_1,\ldots,p_9)$ has class $3E_0-\sum_{i=1}^9 E_i$. B\'ezout's Theorem on $Z$ implies that the strict transforms of $\sV(C)$ and $\sV(A)$ have no intersection points, which means that $\sV(C)$ and $\sV(A)$ are a complete intersection in $9$ points, counted with multiplicity, in $\P^2$. Therefore, the two cubics $A$ and $C$ generate the ideal of this scheme of length $9$. Concretely, this implies that $B = \alpha A + \gamma C$ is a linear combination of $A$ and $C$. 

    Now consider the rational map $\phi\colon \P^2 \dashrightarrow \P^1$, $(x:y:z)\mapsto (A(x,y,z):C(z,y,z))$. Since the fiber over a general point is a cubic curve, the fiber over a generic real point contains a real point. This shows that $F = A^2 + CB$ corresponds to a quadratic form on $\P^1$ that is globally nonnegative and hence a sum of squares in $A$ and $C$, that is, in $\R[x,y,z]$, which contradicts the assumption that $F$ is not a sum of squares. 

    Hence $F$ is real-rooted, nonnegative, and not a square on $C$. Then \Cref{cor:plane_cubics} shows that $F$ is stubborn on $C$, and we can use \Cref{prop:inherit_stubbornness} to conclude the proof.
\end{proof}

\section{Questions and Conjectures}
We finish by stating some open questions and directions of research. Motivated by \cite{Stubborn2024} which shows that the set of non-stubborn polynomials forms a convex cone containing the interior of $P_X$, we make the following conjecture. 
\begin{Conj}
Let $X\subset \P^n$ be a totally real variety and $\mathcal{F}$ a face of $P_X$. Then either all points in the relative interior of $\mathcal{F}$ are stubborn or none of them are. 
\end{Conj}
For smooth curves, this follows from \Cref{thm:main-curves} (only extreme rays with a maximal number of real zeroes are stubborn). Our next conjecture deals with (nonexistence) of gaps in stubborn degrees.

\begin{Conj}
Let $X\subset \P^n$ be a totally real variety. There exists a positive integer $d$ such that for all $1\leq k<d$, there are no stubborn forms of degree $2k$ on $X$ and for all $k\geq d$ there exist stubborn forms of degree $2k$ on $X$.
\end{Conj}

Motivated by the discussion on lifting nonnegativity in \Cref{sec:lifting} we ask a general question, which we think opens an intriguing research direction:
\begin{question}
Classify all pairs of totally real varieties $X\subset Y$ such that any polynomial $f\in P_X$ can be lifted to a nonnegative polynomial $Y$.
\end{question}

Varieties for which all nonnegative quadratics are sums of squares have been classified in \cite{Blekherman2013SumsOS}. We ask for a classification of varieties on which there are no stubborn quadrics.

\begin{question}
Classify all totally real varieties $X$ such that there are no stubborn quadrics on $X$.
\end{question}

We end with a more concrete question. In \cite[Conj. 6]{Stubborn2024} it is conjectured that a globally nonnegative form (that is not a square) spanning an extreme ray of $P_{\,\P^n,2d}$  is stubborn.
This holds for ternary sextics $P_{\,\P^2, 6}$ by \cite[Thm. 1]{Stubborn2024}, in which case extreme rays (that are not squares) are spanned
by forms $F\in P_{\,\P^2,6}$ satisfying $\delta^{\R}(F)=10$, that is, $F$ is a nonnegative degree $6$ form with $10$ real zeros counted with their multiplicities \eqref{eq:delta}, see \cite[Thm. 2]{blekherman_hauenstein_ottem_ranestad_sturmfels_2012} and \cite[Lemma 23]{Stubborn2024}.
The next natural case is the cone of quaternary quartics $P_{\,\P^3, 4}$. Exposed extreme rays (that are not squares) in this case are spanned by $F\in P_{\,\P^3,4}$ with exactly $10$ real zeros.
Furthermore, it follows from \cite[Thm. 3]{blekherman_hauenstein_ottem_ranestad_sturmfels_2012} that $F=\det(xA+yB+zC+wD)$ is the determinant of a symmetric $4\times 4$ matrix of linear forms.
Thus, it is natural to ask whether the proof idea of \cite[Thm. 1]{Stubborn2024} can be adapted to this case.
\begin{question}
Is every non-square nonnegative form that spans an extreme ray of $P_{\,\P^3,4}$ stubborn?
\end{question}
A more general conjecture is presented in \cite[Conj. 6]{Stubborn2024}, but this is the first instance in which to start, and which is better understood. 

\bibliography{biblio_arXiv}

\end{document}